\documentclass[12pt,reqno]{amsart}

%Packages
\usepackage{bbm}
\usepackage[bb=boondox]{mathalfa}
\usepackage[margin=1in]{geometry}
\usepackage{amsthm, amsmath}
\usepackage{tikz, tikz-cd}
\usepackage{hyperref}
\usepackage{enumitem}
\usepackage{quiver}
\usepackage{breqn}
\usepackage{lipsum}
\usepackage{adjustbox}
\usepackage{subcaption}
\theoremstyle{definition}
\usepackage{mathtools}
\setcounter{MaxMatrixCols}{20}

\tikzcdset{scale cd/.style={every label/.append style={scale=#1},
    cells={nodes={scale=#1}}}}

%Necessary for some Tikz Figures
\usetikzlibrary{calc}

\usepackage[english]{babel}
\newtheorem{theorem}{Theorem}

\numberwithin{equation}{section}
%Theorem styles
\newtheorem{lemma}[theorem]{Lemma}

\theoremstyle{definition}

\newtheorem{definition}[theorem]{Definition}

\newtheorem{remark}[theorem]{Remark}

%Setup for reference links
\definecolor{urlcolor}{rgb}{0,.145,.698}
\definecolor{linkcolor}{rgb}{.01,0.01,0.51}
\definecolor{citecolor}{rgb}{.12,.54,.11}
    
\hypersetup{
      breaklinks=true,
      colorlinks=true,
      urlcolor=urlcolor,
      linkcolor=linkcolor,
      citecolor=citecolor,
      }
\begin{document}

\pagestyle{plain}

\begin{abstract}
    Generalized Cluster Algebras (GCA) are generalizations of Cluster Algebras (CA) with higher-order exchange relations. Previously, Chekhov-Shapiro conjectured that every GCA can be embedded into a CA. In this paper, we prove a modified version of this conjecture by providing a construction that realizes a given GCA as subquotient of some CA, as an algebra over the ground ring of the GCA via restriction of scalars. 
\end{abstract}
\title{Generalized Cluster Algebras are Subquotients of Cluster Algebras}
\author[]{Rolando Ramos \quad \quad David Whiting}
\maketitle

\tableofcontents

\section{Introduction}
A Cluster Algebra is a commutative algebra whose generators are determined by a specific combinatorial process, introduced in \cite{fomin2002cluster}. These generators are called \emph{cluster variables} and come in finite collections, called clusters. In the case of the so-called skew-symmetric cluster algebras, 
these clusters are associated with the vertices of quivers (directed graphs containing  neither loops nor two-loops). Local transformations of the quivers about vertices, called mutations, form new clusters of cluster variables. 

Under a mutation $x_i$, the mutated cluster variable is a Laurent polynomial with binomial numerator \[x'_i=\dfrac{P_{in}+P_{out}}{x_i}\] where $P_{in}=\prod_{k:\  k\to i} x_k$, $P_{out}=\prod_{\ell: i\to \ell} x_j$.  A \emph{seed} of the cluster algebra is the pair of a cluster and the associated quiver. A remarkable property of cluster algebras is that any cluster variable obtained by a sequence of mutations of a fixed initial cluster can be expressed as a Laurent polynomial in the initial cluster; this is called a \textit{Laurent phenomenon}. 

In \cite{PoissonCluster}, the authors introduced the Poisson bracket compatible with a (geometric) cluster algebra. Vice versa, given a rational manifold with a homogeneous quadratic Poisson structure, the transformations that preserve the Poisson bracket and some additional natural restrictions were described.

As a result, the authors recovered the mutation rule of quivers and the exchange relation of clusters as unique involution transformations of log-canonical bases satisfying certain additional restrictions. Their complete description of so-called normalized involutive canonical local data associated with a Poisson manifold suggested a more general family of transformations and, hence, more general algebras than cluster algebras.

These \emph{generalized cluster algebras (GCA)} were rigorously defined first in \cite{Techmuller} where the Laurent phenomenon for GCA was demonstrated. The numerators of the Laurent polynomials in the GCA mutation rule could have more than two terms.
All examples of a GCA considered in \cite{Techmuller} stem from the Teichm\"uller space of two-dimensional hyperbolic surfaces with orbifold points and allow an  embedding into the corresponding classical cluster algebras defined in \cite{fomin2002cluster}.

In \cite{Techmuller}, the authors Chekhov-Shapiro conjectured that every generalized cluster algebra could be embedded into some classical cluster algebra. In this paper, confirm this conjecture in the form of theorem \ref{thm: Main Theorem} below.

We assume that a generalized cluster algebra $\mathcal{A}^g$ is defined over 
the group ring $\mathbb{ZP}$ of a semi-field $\mathbb{P}$. 
We consider another semi-field $\widehat{\mathbb{P}}$ and an ideal $\mathcal{I}\subset\mathbb{Z}\widehat{\mathbb{P}}$ and construct 
a cluster algebra $\mathcal{A}$ over the group ring $\mathbb{Z}\widehat{\mathbb{P}}$ such that $\mathbb{Z}\widehat{\mathbb{P}}/\mathcal{I}$ contains $\mathbb{Z}{\mathbb{P}}$ as a subring and $\mathcal{A}\diagup \mathcal{I}^e$ contains $\mathcal{A}$ as a $\mathbb{ZP}$ subalgebra (where  $\mathcal{I}^e$ is the ideal extension of $\mathcal{I}$ into $\mathcal{A}$). Therefore, we state the following theorem, where the subquotient is in the category of $\mathbb{ZP}$ algebras:

\begin{theorem}\label{thm: Main Theorem}
    Any Generalized Cluster Algebra is a Subquotient of a Cluster Algebra.
\end{theorem}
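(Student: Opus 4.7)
The strategy is to realize each generalized exchange relation of $\mathcal{A}^g$ as a composition of ordinary binomial cluster mutations in a larger ambient quiver, after first enlarging the coefficient semifield and then descending to a quotient. The guiding observation is that the degree-$d_i$ exchange polynomial of $\mathcal{A}^g$ at a mutable vertex $i$ factors, over a sufficiently enriched semifield, as a product of $d_i$ binomials, and each binomial factor is of the shape that can appear as the numerator of a single classical cluster mutation. If these $d_i$ mutations can be arranged to assemble back into the correct generalized relation after quotienting, a subquotient realization emerges.

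First I would construct the ambient cluster algebra $\mathcal{A}$. For each mutable vertex $i$ of the GCA quiver $Q^g$ with exchange degree $d_i$, I replace $i$ by a chain $i_1, \ldots, i_{d_i}$ of new mutable vertices and reorganize the incoming and outgoing arrows so that the iterated mutation $\mu_{i_{d_i}} \circ \cdots \circ \mu_{i_1}$ on the resulting quiver $\widehat{Q}$ produces, as a single composite, precisely the product of $d_i$ binomials coming from the factorization of the GCA exchange polynomial at $i$. The semifield $\widehat{\mathbb{P}}$ is obtained from $\mathbb{P}$ by adjoining auxiliary generators $\zeta_{i,1}, \ldots, \zeta_{i,d_i}$ (one set for each such vertex), and $\mathcal{A}$ is then defined as the classical cluster algebra of $\widehat{Q}$ over $\mathbb{Z}\widehat{\mathbb{P}}$, equipped with frozen variables that track the coefficient monomials of $\mathcal{A}^g$.

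Next I would build the ideal $\mathcal{I} \subset \mathbb{Z}\widehat{\mathbb{P}}$. Its generators encode that the elementary symmetric polynomials in $\zeta_{i,1}, \ldots, \zeta_{i,d_i}$ equal the coefficients of the GCA exchange polynomial at $i$; equivalently, the formal identity $\prod_k (1 + \zeta_{i,k} y) \equiv P_i(y)$ holds modulo $\mathcal{I}$. By construction the composition $\mathbb{Z}\mathbb{P} \hookrightarrow \mathbb{Z}\widehat{\mathbb{P}} \twoheadrightarrow \mathbb{Z}\widehat{\mathbb{P}}/\mathcal{I}$ is an inclusion, which provides the restriction-of-scalars structure required in the statement. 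Extending $\mathcal{I}$ to the ideal $\mathcal{I}^e \subset \mathcal{A}$ gives a quotient in which the separate binomial factors produced by a chain-mutation sequence collapse into the generalized exchange polynomial of $\mathcal{A}^g$.

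Finally I would define the candidate inclusion $\mathcal{A}^g \hookrightarrow \mathcal{A}/\mathcal{I}^e$ by sending each initial cluster variable $x_i$ of $\mathcal{A}^g$ to the image of a distinguished endpoint of its replacement chain, and each GCA mutation $\mu_i^g$ to the composite $\mu_{i_{d_i}} \circ \cdots \circ \mu_{i_1}$ modulo $\mathcal{I}^e$. The hardest step will be verifying compatibility under arbitrary sequences of GCA mutations: one must show that after any composite chain-mutation in $\mathcal{A}$ the resulting ambient quiver still admits chains supporting the next GCA mutation by the same pattern, that the extended ideal $\mathcal{I}^e$ is preserved by the induced dynamics, and that the Laurent expression produced by a chain of binomial mutations reduces modulo $\mathcal{I}^e$ to the Laurent expression predicted by the GCA mutation rule. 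Each single-step verification reduces to an identity in elementary symmetric polynomials in the $\zeta_{i,k}$, but globalizing those identities along arbitrary mutation sequences, and then deducing injectivity of the map from the Laurent phenomenon in both settings, is the delicate point where the bulk of the technical work will lie.
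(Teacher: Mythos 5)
Your high-level architecture matches the paper's: enlarge the coefficient semifield with auxiliary generators, factor each degree-$d_i$ exchange polynomial into $d_i$ binomials over that enlargement, impose an ideal $\mathcal{I}$ identifying the elementary symmetric functions of the new generators with the generalized coefficients, and realize the GCA inside $\mathcal{A}/\mathcal{I}^e$ with scalars restricted along $\mathbb{ZP}\hookrightarrow\mathbb{Z}\widehat{\mathbb{P}}/\mathcal{I}$. But two of your concrete choices diverge from the paper in ways that leave a genuine gap. First, you replace each vertex $i$ by a \emph{chain} $i_1,\dots,i_{d_i}$ and perform the mutations \emph{sequentially}, whereas the paper replaces $i$ by $d_i$ vertices with \emph{no arrows among them} (an unfolding in the sense of Felikson--Shapiro--Tumarkin), so the $d_i$ mutations commute and the ``group mutation'' is order-independent; this is what lets the paper prove the Hadamard and double-constant conditions showing that all $d_i$ copies share the same cluster monomials at every group-mutation-equivalent seed. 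Second, and more seriously, you send $x_i$ to a \emph{single distinguished endpoint} of the chain. A single classical mutation at that endpoint produces a binomial exchange relation, so the endpoint variable and its mutation image can never satisfy the degree-$d_i$ generalized relation $x_ix_i'=\theta_i$ on the nose; the intermediate variables $y_{i_1}',\dots$ enter the endpoint's exchange relation as denominators and you give no mechanism for cancelling them modulo $\mathcal{I}^e$. The paper's key move is to send $x_i$ to the \emph{product} $[X_{i,1}\cdots X_{i,d_i}]_{\mathcal{I}^e}$ of all the group's variables: because the copies are disconnected and share cluster/stable monomials, the product of their $d_i$ binomial exchange polynomials literally equals $\sum_r\rho_{ir}U_{>}^rV_{>}^rU_{<}^{d_i-r}V_{<}^{d_i-r}$ modulo $\mathcal{I}^e$ (the Product Formula), which is exactly the generalized exchange relation. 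Without the product map, your ``assembly'' step has no proof strategy behind it.

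A secondary omission: the generalized exchange relation carries floor-function exponents $\lfloor r\widehat{B}_{kj}/d_k\rfloor$ on the frozen variables, so the coefficients of the inhomogeneous exchange polynomial are not the $p_{ir}$ alone and do not directly factor into elementary symmetric polynomials of your $\zeta_{i,k}$. The paper spends its first section adjoining $D$-th roots of every frozen variable precisely to replace $B$ by $\widetilde{B}$ and $p_{ir}$ by the generalized coefficients $\rho_{ir}$, after which the exchange polynomial is genuinely homogeneous in $u_{k>}v_{k>}^{[1]}$ and $u_{k<}v_{k<}^{[1]}$ and splits linearly over the algebraic closure. You would need some analogue of this normalization before your ideal of symmetric-function relations can even be written down uniformly across all seeds.
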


\noindent \textbf{Acknowledgements.} The authors are supported NSF research grant DMS \#2100791. We would like to thank M. Shapiro for introducing this problem and  S. Fomin for his recommendation on using the term subquotient.

\section{Preliminaries}

%TODO: explain what article these definition ofs are based on https://arxiv.org/pdf/1605.05705.pdf

We now describe a class of matrices and a transformation of this class which are fundamental to Cluster Algebra theory. For an $N\times (N+M)$ matrix $B\in \mathbb{Z}^{N\times (N+M)}$ with integer entries, we refer to the leftmost $N\times N$ square sub-matrix of $B$ as its \textit{principal part} and denote it by $\hbox{Principal}(B)\in \mathbb{Z}^{N\times N}$. Similarly, we refer to the rightmost $N\times M$ sub-matrix of $B$ as the \textit{slack part} and denote it by $\hbox{Slack}(B)\in \mathbb{Z}^{N\times M}$. The definition of the principal part and slack of a tall $(M+N)\times M$ matrix are similar, and the choice of wide $N \times (N+M)$ matrices is a convention.

A square matrix $B$ is called $\textit{skew-symmetric}$  if $B=-B^{T}$ so that it is equal to the negative of its transpose. We denote the set of $N\times N$ skew-symmetric matrices as $\hbox{Skew}(\mathbb{Z},n)$. More generally, a square matrix $B\in \mathbb{Z}^{N\times N}$ is called \textit{skew-symmetrizable} if there exists a diagonal matrix $D\in \mathbb{Z}^{N\times N}$ such that the matrix product $DB\in \mathbb{Z}^{N\times N}\in \hbox{Skew}(\mathbb{Z},n)$ is skew-symmetric. We call the diagonal matrix $D$ the \textit{diagonalizer} of $B$. More generally, a rectangular matrix $B\in \mathbb{Z}^{N\times (N+M)}$ is called an \textit{extended exchange matrix} if its principal part is $\textit{skew-symmetrizable}$.

For $1 \leq i \leq N$, fix $d_i$ to be a divisor of $\hbox{gcd}(B_{i,j}: 1\leq j \leq N)$ so that each entry of the $i^{th}$ row of $\hbox{Principal}(B)$ is divisible by $d_i$. 
\begin{definition}
    Associated with a choice of divisors $\{d_i\}_{i=1}^N$ there is a matrix $\widehat{B}\in \mathbb{Z}^{N\times (N+M)}$ called the \textit{Modified Exchange Matrix} given by the following matrix partition,
\begin{equation}
    \widehat{B} = [\widehat{D}\cdot \hbox{Principal}(B)|\hbox{Slack}(B)]
\end{equation}
where $\widehat{D}$ is a diagonal matrix such that $ \widehat{D}= \hbox{diag}(\frac{1}{d_1},\frac{1}{d_2},...,\frac{1}{d_N})$. 
\end{definition}
In other words, the modified exchange matrix is constructed by dividing the rows of the principal part of the exchange matrix by the chosen set of divisors; and the new remaining entries in each row will remain integers after this division process. 

We now provide the standard definitions of a generalized cluster structure using the definitions from \cite{gekhtman2022generalized} (although with slight modifications of some definitions which we specify)\color{black}

In what follows, let $\mathbb{P}:=\hbox{trop}(X_{N+1},...,X_{N+M})$ be the set of Laurent monomials in the variables $X_{N+j}$. Now, by multiplying Laurent monomials as rational functions, this set can be given the structure of an abelian group $(\mathbb{P},\otimes)$. Essentially $(\mathbb{P},\otimes)$ is a free group on $M$ generators. Moreover, by the following binary operation,
\begin{equation*}
    \prod_{j} (X_{N+j})^{\alpha_j} \oplus \prod_{j} (X_{N+j})^{\beta_j}=\prod_{j} (X_{N+j})^{\hbox{max}(\alpha_j,\beta_j)}
\end{equation*}
we can give the underlying abelian group the structure of a \textit{semi-field} as $(\mathbb{P},\oplus, \otimes)$. This particular semi-field is referred to as the \textit{tropical semi-field}.
\color{black}

\begin{definition}[Generalized seed (of geometric type)]\label{def: generalized seed}
    Let $\mathcal{F}$ be the field of rational functions in $N+M$ independent variables. We fix $M$ so called \textit{stable} variables $x_{N+1}, \cdots, x_{N+M}$ and consider the semifield $\mathbb{P}=\hbox{trop}(X_{N+1},...,X_{X+M})$. A \textit{generalized seed}  of rank n in $\mathcal{F}$ is a triple $\textbf{t} = (\text{\textbf{x}}, B, \mathcal{P})$, where
    \begin{enumerate}
        \item \textbf{x} = $(x_1, \cdots, x_N)$ is a transcendence basis of $\mathcal{F}$ where $\mathcal{F}$ contains the field of rational functions $\mathbb{Q}(x_{N+1},...,x_{N+M})$ 
        \item $B$ is an extended exchange matrix,
        \item  $\mathcal{P}$ is a set of $N$ \textit{strings}, where the $i$th string is a tuple $(p_{i0}, \cdots, p_{id_i})$ of laurent monomials $p_{ij} \in \mathbb{P}$ such that $p_{i0} = p_{id_i} = 1$.\color{black}
    \end{enumerate}
    We call the $N$-tuple \textbf{x} $= (x_1, \cdots, x_N)$ a \textit{cluster} and the elements $x_1, \cdots, x_N$ \textit{cluster variables}. Furthermore, we call the monomials $p_{i,j}$ \textit{exchange coefficients}.
\end{definition}

\begin{remark}
    We have modified the string of coefficients $\mathcal{P}$ from \cite{gekhtman2022generalized} so that it now consists of Laurent monomials rather than just monomials. These exchange patterns still satisfy the caterpillar lemma, and so the associated algebra has the Laurent phenomena.
\end{remark} 
\color{black}

\begin{definition}\label{def:seed_mutation}
Given a generalized seed $\textbf{t}=(\text{\textbf{x}}, B, \mathcal{P})$ and an integer $k\in \{1,...,N\}$,  the \textit{adjacent seed in direction} $k$ is the seed $\mu_k(\textbf{t})= (\mu_k (\text{\textbf{x}}),\mu_k (B), \mu_k (P))$ whose components are given by 
   \begin{enumerate}
        \item $\mu_k(\text{\textbf{x}}) = (x_1, \cdots, x_{k-1}, x_k', x_{k+1}, \cdots, x_{N})$, where the new cluster variable $x_k'$ is given by the \textit{generalized exchange relation}
        \begin{align}\label{eq: Generalized Exchange Relation}
            x_kx_k' = \sum_{r=0}^{d_k} p_{kr} u^{r}_{k>}v^{[r]}_{k>} u^{d_k-r}_{k<}v^{[d_k-r]}_{k<},
        \end{align}
        where
        \begin{align*}u_{k>}:= \prod_{\substack{ 1\leq i\leq N \\ \widehat{B_{ki}} > 0}}x_{i}^{\widehat{B_{ki}}}, & \quad \quad v_{k>}^{[r]}:= \prod_{\substack{ N+1\leq i\leq N+M \\ \widehat{B_{ki}} > 0}}x_{i}^{\lfloor r\widehat{B_{ki}}/d_k \rfloor}, \\ u_{k<} := \prod_{\substack{ 1\leq i\leq N \\ \widehat{B_{ki}} < 0}}x_{i}^{-\widehat{B_{ki}}}, & \quad \quad v_{k<}^{[r]} := \prod_{\substack{ N+1\leq i\leq N+M \\ \widehat{B_{ki}} < 0}}x_{i}^{\lfloor -r\widehat{B_{ki}}/d_k \rfloor},   
        \end{align*}
        here [r] denotes $[r]:=\lfloor \frac{ rb_{xk}}{d_x} \rfloor$ with $\lfloor ...\rfloor$ being the integer floor function so that $[r]\in \mathbb{Z}$ is an integer.

       \item    \begin{equation}
                (\mu_k (B))_{i,j} = \begin{cases} -B_{i,j} & \text{$i = k$ or $j=k$} \\ B_{i,j}+ \frac{|B_{i,k}|B_{k,j}+B_{i,k}|B_{k,j}|}{2} & \hbox{otherwise} \end{cases}
                \end{equation}

       \item $\mu_k(\mathcal{P})$ is given by the strings $(p'_{i0}, \cdots, p'_{i_{d_i}})$, where
       \begin{align}\label{eq: p mutation rule}
           \mu_k (p_{ij}) = \begin{cases}
               p_{i,d_i-j}, & \text{if $i=k$}; \\
               p_{ij}, & \text{otherwise}.
           \end{cases}
       \end{align}
   \end{enumerate}
    We call the right-hand side of the generalized exchange relation the \textit{generalized exchange polynomial}, and it is denoted by $\theta_{k}$. We refer to the  monomials $\mu_{k>}$ and $\mu_{k<}$ as the \textit{cluster monomials} and the monomials $v_{k>}$ and $v_{k<}$ as the \textit{stable monomials} in direction $k$. We call the operation $(\text{\textbf{x}}, Q, \mathcal{P}) \mapsto (\mu_k(\text{\textbf{x}}), \mu_k(Q), \mu_k(\mathcal{P}))$ the \textit{mutation in direction $k$}, or in other words in the direction of the $k^{th}$ cluster variable.
\end{definition}

Given a frozen (stable) variable $x_j\in \mathbb{P}$, a (generalized) seed $\mathbf{t}$, a cluster variable $x_k^{\mathbf{t}}$, and natural number $r\in \mathbb{N}$, we define the following Laurent monomials in $\mathbb{P}$:
\begin{equation}\label{eq: frozen box}
    x_j^{[r]}:= (x_j)^{\lfloor r|\widehat{B}_{kj}| / d_k \rfloor} 
\end{equation}
The given cluster variable will be obvious from the context of the discussion every time we use this Laurent monomial in this work. Note that by this definition we have 
\begin{equation*}
    v_{k>}^{[r]}  = \prod_{\substack{ N+1\leq i\leq N+M \\ \widehat{B_{ki}} > 0}}  f_{i}^{[r]} \hbox{ and } v_{k<}^{[r]} = \prod_{\substack{ N+1\leq i\leq N+M \\ \widehat{B_{ki}} < 0}}  f_{i}^{[r]}
\end{equation*}

In what follows say $[n]:=\{1,...,n\} \subset \mathbb{N}$. Consider the effect of a mutation in the direction $k\in [n]$ on the principle part $\hbox{Principle}(B)$ of an exchange matrix $B$. Whenever we consider a particular matrix entry, say $B_{i,j}$ where $i,j \in [n]$, if $i=k$ then it's clear that $(\mu_k (B)_{i,j}$  will still be divisible by $d_i$ since the mutation only changed the sign of the entry $B_{i,j}$. On the other hand, if $i\not = k$, consider that
\begin{gather*}
    (\mu_k (B))_{i,j} = B_{i,j} + \frac{|B_{i,k}|B_{k,j}+B_{i,k}|B_{k,j}}{2}  \\= d_i \widehat{B_{i,j}} + d_i d_k\frac{|\widehat{B_{i,k}}|\widehat{B_{k,j}}+\widehat{B_{i,k}}|\widehat{B_{k,j}}|}{2} = d_i (\widehat{B_{i,j}}+d_k\frac{|\widehat{B_{i,k}}|\widehat{B_{k,j}}+\widehat{B_{i,k}}|\widehat{B_{k,j}}|}{2})
\end{gather*}
hence the mutated entry $(\mu_k (B))_{i,j}$ is divisible by $d_i$, whenever $i,j\in [n]$ so that $B_{i,j}$ is an entry of the principle part of the exchange matrix. More generally, it follows that $d_i$ will divide each entry of the $i^{th}$ row of $\hbox{Principle}(B)$ after any sequence of mutations. 

On the other hand, if we consider the effect of a mutation in direction $k\in [n]$ on the slack part $\hbox{Slack}(B)$ of the exchange matrix $B$ we get a similar formula. For instance, whenever $i\in [N]$ and $j\in [N+M]\setminus [N]$, so that $B_{i,j}$ is an entry of the slack part of the exchange matrix, then
\begin{gather*}
    (\mu_k (B))_{i,j} = B_{i,j} + \frac{|B_{i,k}|B_{k,j}+B_{i,k}|B_{k,j}|}{2} \\= \widehat{B_{i,j}} + \frac{|B_{i,k}|\widehat{B_{k,j}}+B_{i,k}|\widehat{B_{k,j}}|}{2} =  \widehat{B_{i,j}} + d_i \frac{|\widehat{B_{i,k}}|\widehat{B_{k,j}}+\widehat{B_{i,k}}|\widehat{B_{k,j}}|}{2}
\end{gather*}
 
\begin{remark}\label{remark: modified exchange matrix mutation rule}
By the preceding discussion,  it follows that the modified exchange matrix mutates by the following modified mutation rule,

\begin{equation*}
     (\mu_k (\widehat{B}))_{i,j} = \begin{cases} -\widehat{B_{i,j} }& \hbox{if }i= k \hbox{ or } j=k\\ \widehat{B_{i,j}}+ d_k \frac{|\widehat{B_{i,k}}|\widehat{B_{k,j}}+\widehat{B_{i,k}}|\widehat{B_{k,j}}|}{2} & \hbox{if } i\not = k\not =j \hbox{ and } j \leq N \\ \widehat{B_{i,j}}+ d_i \frac{|\widehat{B_{i,k}}|\widehat{B_{k,j}}+\widehat{B_{i,k}}|\widehat{B_{k,j}}|}{2} & \hbox{if } i\not =k \not = j \hbox{ and } j > N
     \end{cases}
\end{equation*}

The distinction between the second and third equations is whether $j$ is mutable or frozen; i.e, whether or not $\widehat{B_{i,j}}$ is from the principal part or the slack part of an exchange matrix. This expression for the mutation rule of the modified exchange matrix was previously used in \cite{gekhtman2022generalized}. 
\end{remark}

The mutation in direction $k$ is an involution on seeds, i.e. $(\mu_k \circ \mu_k)(\mathbf{t})=\mathbf{t}$. It follows that mutation is an equivalence relation on seeds. We say two seeds $\textbf{t}$ and $\textbf{t}'$ are mutation equivalent whenever $\mathbf{t}' = (\mu_{k_a}\circ ... \circ \mu_{k_1}) (\mathbf{t})$ for some finite set of directions $k_1,...,k_a\in \{1,...,N\}$ and we denote this equivalence relation by $\textbf{t}\sim \textbf{t}'$.

In this work, we often have multiple seeds in our discussion of exchange polynomials or stable and cluster monomials so we often denote a particular seed $\mathbf{t}=(\mathbf{x},B,\mathcal{P})$ by a superscript in our notation, for instance, $\theta^{\mathbf{t}}_{k}$ will denote the generalized exchange polynomial of the $k^{th}$ cluster variable of the seed $\mathbf{t}$. Similarly, $\mu_{k<}^{\mathbf{t}}$ and $\mu_{k>}^{\mathbf{t}}$ will denote the cluster monomials and  $v_{k<}^{\mathbf{t}}$ and $v_{k>}^{\mathbf{t}}$ will denote the stable monomials in the direction of the $k^{th}$ cluster variable. On the other hand, whenever we believe the seed $\mathbf{t}$ is entirely clear from the context of the discussion we will omit this superscript notation.

\begin{definition}
    Fix a generalized seed $\textbf{t}_0=(\text{\textbf{x}}, B, \mathcal{P})$, which we will call an \textit{initial seed} and denote the ring of Laurent polynomials in the stable variables as $\mathbb{A}=\mathbb{ZP}$. The corresponding \textit{generalized cluster algebra} $\mathcal{A}^g(t_0)$, or sometimes written as $\mathcal{A}^g(\mathbf{x}, B, \mathcal{P})$, is the $\mathbb{A}$-subalgebra of $\mathcal{F}$ generated by the cluster variables in every seed $\mathbf{t}$ that is mutation equivalent to the initial seed, that is $\mathbf{t}\sim \mathbf{t}_0$. If the set of cluster variables by is denoted by $\mathcal{X}$ then we also use the following notation for the associated GCA: $\mathcal{A}^g(\mathbf{t}_0)=\mathbb{A}[\mathcal{X}]$. \color{black}
\end{definition}

When we choose $d_i=1$ for all $i$, we recover the standard definition of a cluster algebra $\mathcal{A}(\mathfrak{t}_0)$ as defined in \cite{fomin2002cluster}. Let $D:=\prod_{=1}^N d_i$ and  $\mathcal{D}=\sum_{i=1}^N d_i$, call $D$ the \textit{Total Multiplicity} of the GCA and $\mathcal{D}$ the \textit{Pseudo-Rank} of the GCA, so that when the total multiplicity of the generalized cluster algebra is 1, we get a (traditional) cluster algebra.  

We now introduce some combinatorial language to help describe our construction in terms of special graphs.

\begin{definition}
A \textit{Quiver} is a finite directed graph $Q=(Q_0, Q_1)$ where $Q_0$ is the set of vertices and $Q_1$ the set of edges such that there are no edges from a vertex to itself and there are no pairs of edges going in opposite directions between a pair of vertices (that is, there are no 2-cycles).
\end{definition}

To any extended exchange matrix $B\in \mathbb{Z}^{N\times (N+M)}$ whose principal part $\hbox{Principal}(B)$ is a skew-symmetric matrix, there is a unique quiver $Q=(Q_0, Q_1)$. To construct this quiver, associate a vertex to each column of $B$ so that there are $N+M$ many vertices, say $Q_0=\{v_1,...,v_N,...,v_{N+M}\}$ denotes the vertex set; Whenever $B_{i,j}>0$ assign $B_{i,j}$ many arrows from $v_i$ to $v_j$ and similarly, whenever $B_{i,j}<0$ assign $|B_{i,j}|$ many arrows from $v_j$ to $v_i$. The partition of an exchange matrix into its principal and slack parts; induces a partition of the vertices into the disjoint union of the set $Q_0^{m}=\{v_1,...,v_N\}$ and the set $Q_{0}^{f}=\{v_{N+1},...,v_{N+M}\}$ so that $Q_0=Q_0^m \sqcup Q_0^f$ where $\sqcup$ denotes a disjoint union. We call the set of vertices in $Q_0^{m}$ \textit{mutatable vertices} and the set of vertices in $Q_{0}^{f}$ \textit{frozen vertices}. Likewise, for any quiver $Q=(Q_0,Q_1)$ with vertices labeled as frozen and mutatable, there exists a unique exchange matrix $B$ whose principle part is a skew-symmetric matrix, and whose entries $B_{i,j}$ has the absolute value $|B_{i,j}|$ given by the number of arrows between the vertices $v_i$ and $v_j$, and whose sign is $\hbox{sign}(B_{i,j})=1$ whenever there are arrows from $v_i$ to $v_j$ and where $\hbox{sign}(B_{i,j})=-1$ whenever there are arrows from $v_j$ to $v_i$. This provides a one-to-one correspondence between exchange matrices (whose principal parts are skew-symmetric) and quivers.\color{black}

In the general case of an (extended) exchange matrix whose principle part is skew-symmetrizable, the exchange matrix may be represented by a (weighted) quiver called a \textit{diagram}. Still, the diagram does not uniquely determine the (extended) exchange matrix, see \cite{ClusterII}.

In the case that the modified exchange matrix is skew-symmetric, we can represent it with a generalization of a quiver. 

\begin{definition}[Node Weighted Quiver]\label{def: weighted quiver}
Given a quiver $Q=(Q_0,Q_1)$ whose vertex set is partitioned into mutatable and frozen vertices so that $Q_0=Q^m \sqcup Q^f$ with $N$ mutable vertices and $M$ frozen vertices, the $(N+1)-$tuple $(Q,d_1,...,d_N)$ with $d_i\in \mathbb{N}$ is called a \textit{Node Weighted Quiver}.

Say $k\in Q^m$ is a mutable vertex, then the \textit{(Node) Weighted Quiver Mutation} in direction $k$ associated with $(Q,d_1,...,d_N)$ transforms this tuple into a new (node) weighted quiver $(Q',d_1,...,d_N)$ given by the following rules (here $i,j,k\in Q_0$
\begin{enumerate}
    \item For every path between vertices $i\to k \to j$ add $d_k$ many new arrows from $i$ to $j$ whenever both $i$ and $j$ are mutable i.e. $i,j \in Q^m$
    \item For every path between vertices $i\to k \to j$ where exactly one of $i$ or $j$ is mutatable, add $d_i$ or $d_j$ many new arrows from $i$ to $j$ depending on which vertex is mutatable. For instance, say without loss of generality $i \in Q^f$ and $j \in Q^m$ then for every path $i\to k \to j$ add $d_j$ many new arrows $i\to j$. Similarly, if $i\in Q^m$ and $j \in Q^f$ add $d_i$ new arrows $i\to j$. 
    \item Reverse the direction of all arrows indecent to vertex $k$ 
    \item Repeatedly remove all oriented $2$-cycles until all 2-cycles are removed.

\end{enumerate}
\end{definition}

Applying a node-weighted quiver mutation to a node-weighted quiver is precisely the same as applying the modified exchange matrix mutation rule from remark \ref{remark: modified exchange matrix mutation rule} to the corresponding exchange matrix associated with the quiver. Thoughout this work, whenever we discuss the node weighted quiver associated to a GCA, we are assuming the modified exchange matrix $\hat{B}$ used to generate the GCA has a skew symmetric principal part and are considering the quiver associated to this modified exchange matrix.

We now introduce some language for quivers, which will eventually help describe our constructed cluster algebra based on a particular generalized cluster algebra. It is natural to consider partitions of the vertex set of a quiver. The following definitions are from \cite{SpecialFoldings}.

\begin{definition}[Folding of a Quiver and Group Mutations]
Consider a quiver $Q=(Q_0,Q_1)$ where $Q_0$ is the vertex set, and $Q_1$ is the edge set and further consider a partition of the vertex set into some $k\in \mathbb{N}$ many equivalence classes\color{black}, say,
\begin{equation}
    Q_0 = Q_0^1 \sqcup Q_0^2 \sqcup ,...,\sqcup Q_0^k
\end{equation}
If a partition of the vertex set of a quiver satisfies the following conditions, we call this partition a \textit{Valid folding} of the quiver $Q$,
\begin{enumerate}
    \item The vertices in a given equivalence class\color{black}, such as $Q_0^i$, have no arrows amongst themselves
    \item After mutating every vertex in an equivalence class\color{black}, such as $Q_0^i$,  exactly once, property $(1)$ still holds. 
\end{enumerate}
These conditions are referred to as the \textit{folding conditions}. Often, we will call a valid folding of a quiver a folding of a quiver.

For $j \in \{ 1,...,k\}$ and a quiver $Q$ which has a valid folding as in the notation above, the \textit{group mutation} in direction $j$, associated with the equivalence class $Q_0^j$, transforms the quiver $Q$ into a new quiver $Q'$ given by mutating each vertex of $Q_0^j$, that is,
\begin{equation}
    Q' = (\circ_{j'\in Q_0^j} \mu_{j'} )(Q)
\end{equation}
Since in this definition $Q$ is assumed to have valid folding and $Q_0^j$ is an equivalence class of this folding, the composition above is independent of the order of vertices from $Q_0^j$, and this expression is well defined. 
\end{definition}

We now introduce a notation of \textit{unfolding} as defined in \cite{Unfolding}, which applies to general skew-symmetrizable matrices.

\begin{definition}[Unfolding]\label{def: unfolding}
    Let $B$ be an $n\times n$ indecomposable skew-symmetrizable matrix, and $BD$ be skew-symmetric, where $D=(d_i)$ is an integer-valued diagonal matrix with positive diagonal entries. Let $\mathcal{B}$ be a block matrix given by,
    \begin{equation*}
        \mathcal{B}= (\mathcal{B}^{i,j})_{i,j=1}^{n}
    \end{equation*}
where $\mathcal{B}^{i,j}$ is an integer-valued $d_i \times d_j$ sub-matrix so that $\mathcal{B}$ is a $\sum_{k=1}^n d_k$ by $\sum_{k=1}^n d_k$ matrix. Say $\mathcal{B}$ satisfies the following conditions with respect to $B$,
\begin{enumerate}
    \item[(1)] The sum of entries of each column of $\mathcal{B}^{i,j}$ is $B_{i,j}$.
    \item[(2)] If $B_{i,j}>0$, then the entries of the block $\mathcal{B}^{i,j}$ are all non-negative
\end{enumerate}

In the case that $\mathcal{B}$ is skew-symmetric and it satisfies the conditions above with respect to $B$, it follows that the quiver associated with $\mathcal{B}$ has a valid folding on its vertices according to the partition of the columns of $\mathcal{B}$ induced by its blocks, hence there is a set of well-defined group mutations, say $\widehat{\mu_{1}},...,\widehat{\mu_n}$. Even in the general case where $\mathcal{B}$ is only skew-symmetrizable, the mutation of $\mathcal{B}$ by the underlying columns of some block $\mathcal{B}^{i,j}$ is independent of the order of the columns. Hence, there is a well-defined set of group mutations as in the notation above, whenever $\mathcal{B}$ satisfies the conditions above.

We say $\mathcal{B}$ is an unfolding of $B$ if $\mathcal{B}$ satisfies $(1)$ and $(2)$ as above and for any sequence of mutations $\mu_{k_1} \circ ... \circ \mu_{k_m}$, the matrix $\mathcal{B}' = \widehat{\mu_{k_1}} \circ ... \circ \widehat{\mu_{k_m}}(\mathcal{B})$ satisfies $(1)$ and $(2)$ with respect to $B' = \mu_{k_1} \circ ... \circ \mu_{k_m}(B)$.
\end{definition}

\begin{remark}
    Let $\widehat{\mu_k} = \mu_{k_1} ... \mu_{k_m}$ be the  group mutation for the group $K$ of $\mathcal{B}= (\mathcal{B}^{i,j})_{i,j=1}^{n}$. Using properties (1) and (2) of definition \ref{def: unfolding}, for any two groups  $Y=\{y_1,...,y_{d_y}\} $ and $Z=\{z_1,\cdots,z_{dz}\}$, we compute $\widehat{\mu_k}$ for each entry of the block $\mathcal{B}^{Y,Z} = \{b_{y_i,z_j}\}$: 
    \begin{align*}
        \widehat{\mu_k}(b_{y_i,z_j}) = \begin{cases}
            -b_{y_i,z_j} & \text{if $K = Y$ or $K = Z$} \\
            b_{y_i,z_j} + \frac{1}{2}\sum_\ell\left(\text{sgn}(b_{y_i,k_\ell})b_{y_i,k_\ell} b_{k_\ell, z_j} + \text{sgn}(b_{k_\ell, z_j})b_{y_i,k_\ell} b_{k_\ell, z_j}\right) & \text{otherwise} 
        \end{cases}
    \end{align*}
    Now property (2) tells us that the signs of each block are well-defined and thus
    \begin{align*}
        \widehat{\mu_k}(b_{y_i,z_j}) = \begin{cases}
            -b_{y_i,z_j} & \text{if $K = Y$ or $K = Z$} \\
            b_{y_i,z_j} + \frac{1}{2}\sum_\ell\left(\text{sgn}\left(\mathcal{B}^{Y,K}\right)b_{y_i,k_\ell} b_{k_\ell, z_j} + \text{sgn}\left(\mathcal{B}^{K,Z}\right)b_{y_i,k_\ell} b_{k_\ell, z_j}\right) & \text{otherwise} 
        \end{cases}        
    \end{align*}
    Hence we can compute the mutation $\widehat{\mu_k}$ on each block $(\mathcal{B}^{Y,Z})$:
    \begin{align} \label{eq : grouping_mutation}
        \widehat{\mu_k}\left(\mathcal{B}^{Y,Z}\right) = \begin{cases} - \mathcal{B}^{Y,Z} & \text{$K = Y$ or $K=Z$} \\ \mathcal{B}^{Y,Z} + \frac{1}{2}\left(\text{sgn}(\mathcal{B}^{Y,K})\mathcal{B}^{Y,K}\mathcal{B}^{K,Z} + \text{sgn}\left(\mathcal{B}^{K,Z}\right)\mathcal{B}^{Y,K}\mathcal{B}^{K,Z}\right) & \hbox{otherwise.} \end{cases}
    \end{align}
    
    In other words, we can obtain the mutation formula for the group mutation $\widehat{\mu_k}$ by replacing the entries $B_{i,j}$ with the corresponding blocks $\mathcal{B}^{i,j}$ in the standard mutation rule. See definition \ref{def:seed_mutation}, part (2) for comparison.
\end{remark}

A fundamental result of cluster algebras is that any cluster may be written as a Laurent polynomial in the cluster variables of the initial seed, this is called the \textit{Laurent Property}. The proof for generalized cluster variables is discussed in \cite{Techmuller}.

\begin{theorem}[Laurent Phenomenon]
    Any generalized cluster variable may be written as a Laurent polynomial in the cluster variables of the initial seed.
\end{theorem}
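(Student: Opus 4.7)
The plan is to build $\mathcal{A}$ by applying the unfolding construction of Definition \ref{def: unfolding} to the modified exchange matrix $\widehat{B}$ of the initial GCA seed, and to enlarge the frozen part so that a carefully chosen ideal $\mathcal{I}$ in the coefficient ring converts, after quotienting, the $2^{d_k}$-term group exchange relation in $\mathcal{A}$ into the $(d_k+1)$-term generalized exchange relation of $\mathcal{A}^g$. Concretely, I would first replace $\widehat{B}$ by a skew-symmetric block matrix $\mathcal{B}$ of size $\mathcal{D}\times\mathcal{D}$, where $\mathcal{D}=\sum_i d_i$, so that the $i$-th mutable vertex of the GCA becomes a group $K_i=\{i_1,\ldots,i_{d_i}\}$ of $d_i$ mutable vertices of the CA. By Definition \ref{def: unfolding}, the group mutation $\widehat{\mu_k}=\mu_{k_1}\circ\cdots\circ\mu_{k_{d_k}}$ at $K_k$ in $\mathcal{A}$ tracks the GCA mutation $\mu_k$ at the level of quivers through the block formula \eqref{eq : grouping_mutation}.

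Next I would design the frozen part of $\mathcal{A}$ together with the ideal $\mathcal{I}\subset\mathbb{Z}\widehat{\mathbb{P}}$. For every $k$, every frozen index $\ell$, and every $0\le r\le d_k$, I introduce auxiliary frozen variables in $\mathcal{A}$ whose products realize the stepped exponents $\lfloor r\widehat{B}_{k,\ell}/d_k\rfloor$ appearing in $v^{[r]}_{k>}$ and $v^{[r]}_{k<}$. For each $k$ and each $0<r<d_k$, I also introduce an auxiliary frozen variable that will serve, after quotienting, as a correction factor turning $\binom{d_k}{r}$ into $p_{kr}$. The ideal $\mathcal{I}$ is then generated by (i) relations identifying, for each $k$ and each $r$, the symmetric sum $\sum_{|S|=r}\prod_{s\in S}A_s\prod_{s\notin S}B_s$ arising from expanding $\prod_{s=1}^{d_k}(A_s+B_s)$ with the monomial $p_{kr}\,u_{k>}^{r}\,v^{[r]}_{k>}\,u_{k<}^{d_k-r}\,v^{[d_k-r]}_{k<}$, together with (ii) relations on the auxiliary frozen variables that force $\mathbb{ZP}$ to embed as a subring of $\mathbb{Z}\widehat{\mathbb{P}}/\mathcal{I}$. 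The embedding $\iota:\mathcal{A}^g\hookrightarrow\mathcal{A}/\mathcal{I}^e$ is then defined on cluster variables by $x_k\mapsto\prod_{s=1}^{d_k}x_{k_s}\pmod{\mathcal{I}^e}$ and on the coefficient ring by the inclusion $\mathbb{ZP}\hookrightarrow\mathbb{Z}\widehat{\mathbb{P}}/\mathcal{I}$.

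To verify that $\iota$ intertwines mutation at the initial seed, I would compute $\widehat{\mu_k}\!\left(\prod_s x_{k_s}\right)$ in $\mathcal{A}$. Each individual binomial mutation $\mu_{k_s}$ contributes a factor $(A_s+B_s)/x_{k_s}$, so expanding the product and grouping by $|S|=r$ yields
\[
\widehat{\mu_k}\!\left(\prod_s x_{k_s}\right)\cdot\prod_s x_{k_s} \;=\; \sum_{r=0}^{d_k}\,\sum_{|S|=r}\prod_{s\in S}A_s\prod_{s\notin S}B_s,
\]
which under relation (i) becomes exactly the right-hand side of \eqref{eq: Generalized Exchange Relation}. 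Injectivity of $\iota$ then follows by combining the Laurent phenomenon in $\mathcal{A}$ with the fact that $\mathbb{ZP}$ sits inside $\mathbb{Z}\widehat{\mathbb{P}}/\mathcal{I}$ as a subring.

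The main obstacle is inductive compatibility: although $\mathcal{I}$ is defined via data at the initial seed only, the embedding must intertwine mutation at every seed mutation-equivalent to it. I would handle this by induction on the length of a mutation sequence, using \eqref{eq : grouping_mutation} to track how the block matrix and the auxiliary frozen variables transform in $\mathcal{A}$, and \eqref{eq: p mutation rule} to track how the exchange coefficients $p_{kr}$ evolve in $\mathcal{A}^g$. The delicate point is that group mutation acts nontrivially on the auxiliary frozen variables encoding the floor exponents $\lfloor r\widehat{B}_{k,\ell}/d_k\rfloor$, and one must verify that the mutated CA-side monomials continue to match the mutated GCA-side monomials modulo $\mathcal{I}$; the Laurent phenomenon together with the uniqueness of the Laurent expansion in the initial cluster should reduce every such identity to the initial-seed identity built into $\mathcal{I}$ by construction.
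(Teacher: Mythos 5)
Your proposal does not prove the stated theorem; it sketches the paper's \emph{main} construction (the unfolding $\mathcal{B}$, the ideal $\mathcal{I}$, and the embedding $x_k\mapsto\prod_s x_{k_s}$ modulo $\mathcal{I}^e$), which is the content of Theorem \ref{thm: Main Theorem}, not of the Laurent Phenomenon. The paper does not prove the Laurent Phenomenon at all: it is quoted from \cite{Techmuller}, where it is established by the Caterpillar Lemma argument of Fomin--Zelevinsky adapted to higher-order exchange polynomials (the paper even remarks earlier that the modified coefficient strings ``still satisfy the caterpillar lemma, and so the associated algebra has the Laurent phenomena''). Attempting instead to \emph{derive} the generalized Laurent Phenomenon from the subquotient embedding is circular in the context of this paper: the proof of the embedding theorem in Section \ref{sec: embedding} begins ``By the Laurent phenomena of cluster algebras, there exists a unique $\mathbb{Z}$-algebra embedding $\Phi:\mathcal{A}^g(\mathbf{x},\widetilde{B},\widetilde{\mathcal{P}})\to\mathbb{Z}\widehat{\mathbb{P}}[\widehat{\mathbf{s}_0}^{\pm 1}]/\mathcal{I}^E$,'' i.e.\ the Laurent property of the \emph{generalized} algebra is what makes your map $\iota$ well defined on all of $\mathcal{A}^g$ in the first place. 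Your proposal acknowledges needing to check consistency at every seed by induction, but without an independent Laurent-type statement you have no canonical presentation of $\mathcal{A}^g$ on which to define $\iota$ as an algebra map.

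Even granting the embedding, the final step has a gap. The classical Laurent Phenomenon in $\mathcal{A}^c$ expresses $\prod_s X_{k,s}^{\widehat{\mathbf{s}}}$ as a Laurent polynomial in the \emph{individual} initial variables $X_{i,t}^{\widehat{\mathbf{s}}_0}$ with coefficients in $\mathbb{Z}\widehat{\mathbb{P}}$; to conclude anything about $x_k^{\mathbf{s}}$ as a Laurent polynomial in the initial \emph{generalized} cluster variables you must further show that, modulo $\mathcal{I}^e$, this expression can be rewritten in the grouped products $\prod_t X_{i,t}^{\widehat{\mathbf{s}}_0}$ with coefficients lying in the subring $\mathbb{ZP}\subset\mathbb{Z}\widehat{\mathbb{P}}/\mathcal{I}$, and then transported back along $\iota^{-1}$. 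None of these reductions is automatic, and your appeal to ``uniqueness of the Laurent expansion'' does not supply them. The direct route is the intended one: prove the Laurent property for generalized seeds by the Caterpillar Lemma, verifying that adjacent generalized exchange polynomials $\theta_k$ remain coprime to the relevant cluster variables so that the Fomin--Zelevinsky induction goes through; this is exactly what is done in \cite{Techmuller} and what the paper relies on.
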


Say $\mathcal{A}^g(\mathbf{t}_0)$ is a generalized cluster algebra over ground ring $\mathbb{ZP}$. In what follows, let $\mathcal {A}_{\mathbf{t}}^{g}$ denote the $\mathbb{ZP}$ sub algebra of the ambient field $\mathcal{F}$ generated by the cluster variables of $\mathbf{t}$. Note that for any seed $\mathbf{t}\sim \mathbf{t}_0$, its stable monomials and cluster monomials are in this ring, that is for any $k\in [N]$ then $(\mu_{k<})^{\mathbf{t}},(\nu_{k<}^{[r]})^{\mathbf{t}} \in \mathcal{A}_{\mathbf{t}}^g$ and it follows that the exchange polynomials of the seed are also in this ring, hence $(\theta_k)^{\mathbf{t}}\in \mathcal{A}_{\mathbf{t}}^g$.

In \cite{DrinfeldDouble} a rewritten expression for a generalized exchange polynomial was considered, which we now derive. As in the notation of \cite{DrinfeldDouble}, in what follows write $v_{k>}$ instead of $v_{k>}^{[d_k]}$ and $v_{k<}$ instead of $v_{k<}^{[d_k]}$. For integers $r\in \{0,...,d_i\}$ and $k\in \{1,...,N\}$ consider the following Laurent monomials in the stable variables,
\begin{equation*}
    q_{kr} = \frac{(v_{k>})^{r}(v_{k<})^{d_k-r}}{(v^{[r]}_{k>} v^{[d_k-r]}_{k<})^{d_k}} \in \mathbb{P}
\end{equation*}
It was shown in \cite{DrinfeldDouble} that $q_{kr}$ mutates the same as the rule \ref{eq: p mutation rule}. In what follows we use the Laurent monomial $\widehat{p_{kr}}:=\frac{(p_{kr}^{d_k})}{q_{kr}}\in \mathbb{P}$. Now the generalized exchange relation from \ref{eq: Generalized Exchange Relation} can be rewritten as
\begin{equation}\label{eq: Root Formula}
    \theta_k  = \sum_{r=0}^{d_k} (\widehat{p_{kr}} (v_{k>})^{r} (v_{k<})^{d_k-r})^{\frac{1}{d_k}} (u_{k>})^{r} (u_{k<})^{d_k-r}
\end{equation}
since 
\begin{equation*}
    \widehat{p_{kr}} (v_{k>})^{r} (v_{k<})^{d_k-r} = (p_{kr}v^{[r]}_{k;>} v^{[d_k-r]}_{k;<})^{d_k}\in \mathbb{P}
\end{equation*}
We refer to \ref{eq: Root Formula} as the \textit{Root formula}. We now define laurent monomials in the semifield $\mathbb{P}$, which will sometimes be related to $q_{kr}$. 

In what follows, let $f_j$  be a frozen variable (here $N+1\leq j \leq N+M$) of the (generalized) cluster algebra $\mathcal{A}(\mathbf{t}_0)$ with (initial) modified exchange matrix $\widehat{B}$. Given any (generalized) seed $\mathbf{t}$ which is mutation equivalent to $\mathbf{t}_0$ and cluster variable $x_k^{\mathbf{t}}$ of this seed, for any natural numbers $n\in \mathbb{N}$ and $r\in [d_k]$ we define the following Laurent monomial in $\mathbb{P}$ associated to the frozen variable $f_j$ 
\begin{equation} \label{def: special laurent monomials}
    {}_{\mathbf{t}}^{n}(f_j)_{k}^{r}=  f_j^{\left( n \left\lfloor \frac{r \cdot \widehat{B_{kj}}}{d_k} \right\rfloor - \left\lfloor n \cdot \frac{r \cdot \widehat{B_{kj}}}{d_k} \right\rfloor \right)} \in \mathbb{P}
\end{equation}

Note that the particular Laurent monomials ${}_{\mathbf{t}}^{d_k}(f_j)_{k}^{r}$ have the following relations for any natural number $e\in \mathbb{N}$,
\begin{equation}\label{eq: section one relations}
    \left[{}_{\mathbf{t}}^{d_k} (f_j)_{k}^{r} \right]^e =     {}_{\mathbf{t}}^{e\cdot d_k} (f_j)_{k}^{r} 
\end{equation}

In addition, each ${}_{\mathbf{t}}^{d_k}(f_j)_{k}^{r}$ will be related to $q_{kr}$ in the following sense,
\begin{equation}\label{eq: q monomial generalization}
q_{kr}^{\mathbf{t}}=\prod_{N+1\leq j \leq N+M} \left[ \frac{1}{{}_{\mathbf{t}}^{d_k}(f_j)_{k}^{r}} \right]
\end{equation}

\section{Main Result}

Given a generalized seed $\mathbf{t}_0=(\mathbf{x},B,\mathcal{P})$ and an associated generalized cluster algebra $\mathcal{A}^g(\mathbf{t}_0)$ over a coefficient ring $\mathbb{A}$, we will construct a new seed $\mathbf{s}_0=(\mathbf{y},\mathcal{B}, \mathcal{P}')$ so that the associated generalized cluster algebra over a coefficient ring $\mathbb{A}'$, denoted by $\mathcal{A}^c(\mathbf{s}_0)$, has total multiplicity $1$, hence it will be a (traditional) cluster algebra. We will then construct a quotient algebra of $\mathcal{A}^c(\mathbf{s}_0)$ so that this quotient algebra contains $\mathcal{A}^g(\mathbf{t}_0)$ as an embedded $\mathbb{A}$-subalgebra and so that the embedding map $\Psi$ is compatible with mutations in the sense that a single generalized cluster mutation corresponds to a sequence of (traditional) cluster mutations. This will imply the main theorem.

In general, the generalized exchange polynomials of the given GCA  $\mathcal{A}^g(B,\mathcal{P})$ are homogeneous polynomials, although the corresponding inhomogeneous polynomial has coefficients given by certain floor functions. We show in the first section, that we can embed $\mathcal{A}^g(B,\mathcal{P})$ into a new generalized cluster algebra $\mathcal{A}^g(\widetilde{B},\widetilde{\mathcal{P}})$ such that the generalized exchange polynomials of $\mathcal{A}^g(\widetilde{B},\widetilde{\mathcal{P}})$ can be considered as homogeneous polynomials whose corresponding inhomogeneous polynomial has coefficients which are given by expressions which do not involve these floor functions previously described. The elements of the strings in $\widetilde{\mathcal{P}}$ will be called \textit{generalized coefficients}. This embedding will be an intermediate step in our overall embedding of $\mathcal{A}^g(B,\mathcal{P})$ into the mentioned quotient algebra.

Next, we consider the initial exchange matrix $B$ of a given (GCA) $\mathcal{A}^g(B,\mathcal{P})$ and describe the construction of a new exchange matrix $\mathcal{B}$ with a corresponding grouping of columns and rows. We prove some mutation compatibility conditions between mutations of the given exchange matrix $B$ and group mutations of the constructed exchange matrix $\mathcal{B}$. When we consider the associated (traditional) cluster algebra $\mathcal{A}^c(\mathbf{s}_0)$, its ring of coefficients $\mathbb{A}'$ will then contain the given ring of coefficients $\mathbb{A}$ as an embedded subring. We then define an ideal $\mathcal{I}$ of, which we will describe later, of $\mathcal{A}^c(\mathbf{s}_0)$ generated by some algebraic expression involving the generalized coefficients of the ring $\mathbb{A}$.

We will show that the quotient algebra $\mathcal{A}^c(\mathbf{s}_0) \diagup \mathcal{I}$ will contain the given generalized cluster algebra $\mathcal{A}^g(\mathbf{t}_0)$ as an embedded $\mathbb{A}$ subalgebra. To do this, we will consider the seeds of $\mathcal{A}^c(\mathbf{s}_0)$ obtained from the initial seed by group mutations. For these seeds that are group mutation equivalent to the initial seed, we prove a product formula, which we will describe later, for the product of exchange polynomials of a group of cluster variables in this quotient algebra. We show that the terms of this product have particular constant factors, which are ultimately given by the \textit{generalized coefficients}.

Based on the Laurent phenomena, we construct an embedding of the intermediate generalized cluster algebra $\mathcal{A}^g(\widetilde{B},\widetilde{\mathcal{P}})$ into the quotient algebra $\mathcal{A}^c(\mathcal{B})\diagup\mathcal{I}$. This will imply that the given generalized cluster algebra $\mathcal{A}^g(B,\mathcal{P})$ will be isomorphic to a $\mathbb{ZP}$ algebra subquotient of the cluster algebra $\mathcal{A}^c(\mathcal{B})$, provided we restrict the scalars of the (traditional) cluster algebra. We will explicitly describe the subquotient which the given generalized cluster algebra is isomorphic to as $\mathbb{ZP}$ algebras and also describe how we restrict the scalars of the (traditional) cluster algebra.

\subsection{Adjoining nth roots of frozen variables to a GCA $\mathcal{A}(\mathbf{x},B,\mathcal{P})$} \label{section: Ring of Coefficients}

\begin{figure}[b]
 \begin{subfigure}{0.49\textwidth}
     \[\begin{tikzcd}
	{\color{blue}a} &&& {x_3} \\
	\\
	\\
	{\color{blue}b} &&& {y_2}
	\arrow["4", from=1-1, to=1-4]
	\arrow["2"', from=1-4, to=4-1]
	\arrow[from=1-4, to=4-4]
	\arrow["3", from=4-1, to=4-4]
\end{tikzcd}\]
     \caption{Initial node weighted quiver $Q$}
     \label{fig:a}
 \end{subfigure}
 \hfill
 \begin{subfigure}{0.49\textwidth}
     \[\begin{tikzcd}
	{\color{blue}a^{\frac{1}{6}}} &&& {\overline{x}_3} \\
	\\
	\\
	{\color{blue}b^{\frac{1}{6}}} &&& {\overline{y}_2}
	\arrow["{\color{red}24}", from=1-1, to=1-4]
	\arrow["{\color{red}12}"', from=1-4, to=4-1]
	\arrow[from=1-4, to=4-4]
	\arrow["{\color{red}18}", from=4-1, to=4-4]
\end{tikzcd}\]
     \caption{Constructed node weighted quiver $\overline{Q} $ }
     \label{fig:b}
 \end{subfigure}

% C and D
 \begin{subfigure}{0.49\textwidth}
     $\theta_x= a^4+p_{1x}a^2y+p_{2x}ay^2 b+y^3b^2$ \\
     $\theta_y = b^3x^2+p_{1y}bx+1$
     \caption{Generalized Exchange Polynomials with floor functions in coefficients}
     \label{fig:a}
 \end{subfigure}
 \hfill
 \begin{subfigure}{0.49\textwidth}
     $\theta_x= a^{24}+\frac{(p_{1x})^6}{a^4}a^{16} yb^4+\frac{(p_{2x})^6}{a^2}a^8 y^2 b^8 +y^3 b^{12}$ \\
     $\theta_y = b^{18}x^2+\frac{(p_{1y})^6}{b^3}b^9x+1$
     \caption{New Generalized Exchange Polynomials without floor functions in coefficients}
     \label{fig:b}

 \end{subfigure}

% E and F
\begin{subfigure}{0.49\textwidth}
     $\tau_x =\frac{y}{1} \hbox{ and } \tau_y = \frac{1}{x}$
     \caption{Homogeneous variables of given exchange polynomials}
     \label{fig:a}
 \end{subfigure}
 \hfill
 \begin{subfigure}{0.49\textwidth}
     $\tau_x =\frac{b^4y}{a^8} \hbox{ and } \tau_y = \frac{1}{xb^9}$
     \caption{Homogeneous variables of new exchange polynomials}
     \label{fig:b}
 \end{subfigure}
    \caption{A given node weighted quiver $Q$ of a modified exchange matrix $\hat{B}$ and the constructed the node weighted quiver $\overline{Q}$ whose exchange polynomials are homogenous without floor function as coefficient.} \label{fig: quiver with adjoined roots}
\end{figure}
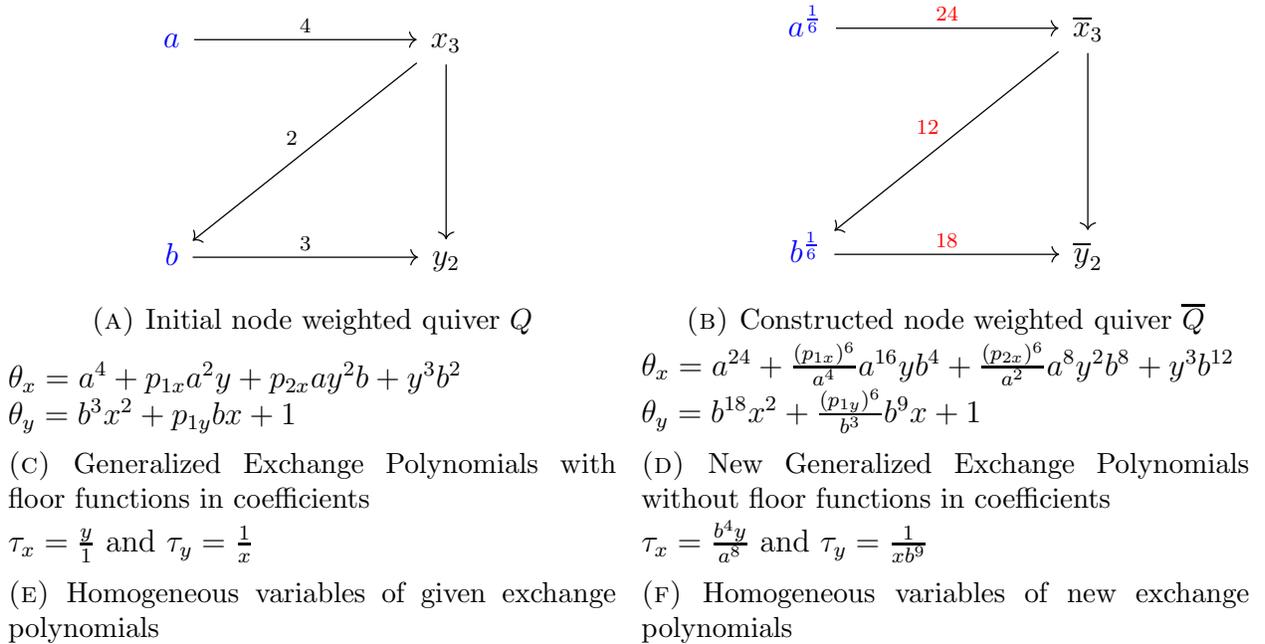

One of the biggest difficulties in working with the generalized exchange relation (\ref{eq: Generalized Exchange Relation}) is using the integer floor function to compute the exponents of the frozen variables. In this section, we will show that for any generalized cluster algebra $\mathcal{A}(\mathbf{x}, B, \mathcal{P})$ and frozen variable $f_j$, the algebra $\mathcal{A}(\mathbf{x}, B, \mathcal{P})[f_j^{1/n}]$ obtained by replacing $f_j$ with its $n$-th root is identifiable as a generalized cluster algebra. By composing these embeddings for each frozen variable, with a decent choice of $n$, we can consider any generalized cluster algebra in a larger generalized cluster algebra such that the exchange polynomials can be considered as homogenous polynomials whose coefficients do not contain floor functions. The strings of coefficients of this generalized cluster algebra will be used to define the subquotient in Section $\ref{sec: embedding}$. See figure $\ref{fig: quiver with adjoined roots}$ for an example of this process of adjoining roots to remove integer floor functions from the coefficients. 

In what follows, say $[n]=\{1,..,n\}$.

\begin{definition}
    Let $\mathcal{A}(\mathbf{x}, B, \mathcal{P})$ be a generalized cluster algebra over the ground ring $\mathbb{ZP}$ and semifeild $\mathbb{P}=\hbox{trop}(f_1,...,f_m)$. Let $\varphi_{j,n}:\mathbb{ZP}\to \mathbb{ZP}$ denote the unique embedding of $\mathbb{Z}$ algebras which fixes every frozen variable except for $f_j$, which is sent to $(f_i)^n$; that is;
    \begin{equation*}
        \varphi_{j,n}(f_i) = \begin{cases}
            (f_i)^n & \quad i=j, \\
            f_i & \quad i \neq j.
        \end{cases} 
    \end{equation*}
\end{definition}

In what follows, recall the special Laurent monomials ${}_{\mathbf{t}_0}^n(f_j)_{k}^{r}$ associated to the frozen variables of a (generalized) cluster algebra $\mathcal{A}(\mathbf{t}_0)$ defined in equation \ref{def: special laurent monomials} of the preliminaries.

\begin{lemma}
    Let $\mathbf{t}_0=(\mathbf{x},B,\mathcal{P})$ of rank $N$ and for a sequence of direction $s\in  [N]^d$ of length $d$ and consider the generalized seed $\mathbf{t}=\mu_s(\mathbf{t}_0)$. There exists a generalized seed $\overline{\mathbf{t}_0}=(\overline{\mathbf{t}_0},\overline{B},\overline{\mathcal{P}})$ such that the map $\varphi_{j,n}$ uniquely extends to an embedding of $\mathbb{ZP}$ algebras $\varphi_{j,n} : \mathcal{A}(\mathbf{t}_0) \hookrightarrow \mathcal{A}(\overline{t_0})$ where for each frozen variable $f_j$ and cluster variable $x_k^{\mathbf{t}}$, the map is given by 
    \begin{align*}
        \varphi_{j,n}(f_i) = 
    \begin{cases}  (f_i)^n  &i = j, \\ f_i & i \neq j. \end{cases}\\
        \varphi_{j,n}(x_k^{\mathbf{t}})=\overline{x_k}^{\overline{\mathbf{t}}} 
    \end{align*}
    where $\overline{\mathbf{t}}=\mu_s(\overline{\mathbf{t}_0})$. Moreover, $\overline{\mathbf{t_0}}$ is given by the following data
    \begin{enumerate}
        \item $\overline{B}$ is the matrix obtained from $B$ by multiplying the column $j$ corresponding to $f_j$ by $n$,
        \item The coefficients $\overline{p_{kr}} \in\overline{P}$ are given by the formula
        \begin{align*}
            \overline{p_{kr}} := \varphi_{j,n}(p_{kr}) \cdot {}_{\mathbf{t}_0}^n(f_j)_{k}^{r}.
        \end{align*}
    \end{enumerate}
\end{lemma}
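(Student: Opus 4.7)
The plan is to proceed by induction on the length $d$ of the mutation sequence $s$. I would first extend $\varphi_{j,n}$ to a field embedding $\mathcal{F} \hookrightarrow \overline{\mathcal{F}}$ of the two ambient fields by sending $x_i \mapsto \overline{x}_i$, $f_j \mapsto f_j^n$, and $f_i \mapsto f_i$ for $i \neq j$; the images remain algebraically independent, so this is well-defined and injective, and it restricts to the claimed map on $\mathbb{ZP}$. Before starting the induction, I would check that $\overline{\mathbf{t}_0}$ is a valid generalized seed: the principal part of $\overline{B}$ agrees with that of $B$ (only the frozen column $j$ is rescaled), so the same divisors $\{d_i\}$ still work, and the boundary values $\overline{p}_{k0} = \overline{p}_{k d_k} = 1$ hold because ${}_{\mathbf{t}_0}^n(f_j)_k^0 = 1$ trivially and ${}_{\mathbf{t}_0}^n(f_j)_k^{d_k} = 1$ since $r = d_k$ makes $r\widehat{B_{kj}}/d_k$ an integer, forcing the two floor terms in its exponent to cancel.

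The inductive claim is $\varphi_{j,n}(x_k^{\mathbf{t}}) = \overline{x}_k^{\overline{\mathbf{t}}}$ for every cluster variable at every $\mathbf{t} \sim \mathbf{t}_0$. Since the new cluster variable is $x_k'^{\mathbf{t}} = \theta_k^{\mathbf{t}}/x_k^{\mathbf{t}}$ and the inductive hypothesis covers $x_k^{\mathbf{t}}$, the step reduces to showing $\varphi_{j,n}(\theta_k^{\mathbf{t}}) = \overline{\theta}_k^{\overline{\mathbf{t}}}$. The cluster-monomial parts $u_{k>}^{\mathbf{t}}, u_{k<}^{\mathbf{t}}$ depend only on the principal columns of $\widehat{B^{\mathbf{t}}}$, which coincide with those of $\widehat{\overline{B}^{\overline{\mathbf{t}}}}$, so they map correctly by induction. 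Matching term by term in the summation index $r$ then reduces the problem to the identity
\begin{equation}
    \overline{p}_{kr}^{\overline{\mathbf{t}}} = \varphi_{j,n}(p_{kr}^{\mathbf{t}}) \cdot {}_{\mathbf{t}}^n(f_j)_k^r
\end{equation}
holding at every $\mathbf{t} \sim \mathbf{t}_0$, because ${}_{\mathbf{t}}^n(f_j)_k^r$ is precisely the $f_j$-correction needed to convert $\varphi_{j,n}(v_{k>}^{[r]} v_{k<}^{[d_k-r]})$ into $\overline{v}_{k>}^{[r]} \overline{v}_{k<}^{[d_k-r]}$: $\widehat{\overline{B}_{kj}} = n \widehat{B_{kj}}$, and applying $\varphi_{j,n}$ multiplies the $f_j$-exponent by $n$ before taking the floor, whereas the target exponent takes the floor after multiplication by $n$.

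I would establish the displayed identity by a secondary induction on mutation length. At $\mathbf{t}_0$ it is the definition of $\overline{p}_{kr}$. Under a mutation by $\ell \neq k$, both $\overline{p}_{kr}^{\overline{\mathbf{t}}}$ and $\varphi_{j,n}(p_{kr}^{\mathbf{t}})$ are preserved by the coefficient-mutation rule, and ${}_{\mathbf{t}}^n(f_j)_k^r$ is invariant: by Remark \ref{remark: modified exchange matrix mutation rule}, $\widehat{B_{kj}}$ changes by $d_k \cdot \frac{|\widehat{B_{k\ell}}|\widehat{B_{\ell j}} + \widehat{B_{k\ell}}|\widehat{B_{\ell j}}|}{2}$, so $r\widehat{B_{kj}}/d_k$ and $nr\widehat{B_{kj}}/d_k$ each shift by the same integer, which cancels out between the two floor terms. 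Under a mutation in direction $k$, the string $p_{k\cdot}$ reverses and row $k$ of $\widehat{B}$ negates, and a short floor identity gives ${}_{\mathbf{t}}^n(f_j)_k^{d_k-r} = {}_{\mu_k(\mathbf{t})}^n(f_j)_k^r$, which matches the string reversal and preserves the identity. Injectivity of the final algebra map is inherited from the ambient-field embedding.

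The main obstacle will be the floor-function bookkeeping, particularly handling both signs of $\widehat{B_{kj}}$ uniformly: depending on this sign, $f_j$ appears in $v_{k>}^{[r]}$ or in $v_{k<}^{[d_k-r]}$, so the discrepancy factor must be expressed in terms of the signed $\widehat{B_{kj}}$ (as in the definition of ${}_{\mathbf{t}}^n(f_j)_k^r$) and verified against each case via $\lfloor -x \rfloor = -\lceil x \rceil$ and integer translation.
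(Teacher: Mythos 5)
Your proposal is correct and follows essentially the same route as the paper's proof: extend $\varphi_{j,n}$ to an embedding of ambient fields, induct on the mutation sequence, note that the cluster monomials match because the principal parts of $\widehat{B}$ and $\widehat{\overline{B}}$ coincide, and reduce the coefficient/stable-monomial matching to the mutation-invariance of the floor-function discrepancy $f_j^{\left(n\lfloor r\widehat{B_{kj}}/d_k\rfloor-\lfloor nr\widehat{B_{kj}}/d_k\rfloor\right)}$, which holds because mutation in a direction $\ell\neq k$ shifts $\widehat{B_{kj}}$ by a multiple of $d_k$. Your packaging of the key step as the identity $\overline{p}_{kr}^{\overline{\mathbf{t}}}=\varphi_{j,n}(p_{kr}^{\mathbf{t}})\cdot{}_{\mathbf{t}}^{n}(f_j)_{k}^{r}$ at every seed, together with the explicit check that $\overline{\mathbf{t}_0}$ is a valid seed, is an equivalent (and slightly cleaner) formulation of the paper's conditions (ii) and (iii).
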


\begin{proof}

    There is a unique embedding of $\mathbb{Z}$ algebras $\varphi:\mathcal{F}\to \overline{\mathcal{F}}$ such that $\varphi(f_i)=\varphi_{j,n}(f_i)$ and $\varphi(x_k^{\mathbf{t}_0})=\overline{x_k}^{\overline{\mathbf{t}_0}}$, on the hand $\varphi $ is also a morphism of $\mathbb{ZP}$ algebras when its domain and codomain have the $\mathbb{ZP}$ algebra structure given by the following diagram:
    \[\begin{tikzcd}
	{\mathbb{ZP}} && {\mathcal{F}}&& {\overline{\mathcal{F}}}
	\arrow["\iota", from=1-1, to=1-3]
	\arrow["\varphi", from=1-3, to=1-5]
    \end{tikzcd}\]

    Since $\hbox{image}(\phi\circ \iota)\subset \mathcal{A}(\overline{\mathbf{t}_0}) $ we can consider $\mathcal{A}(\overline{\mathbf{t}_0})$ as a $\mathbb{ZP}$ sub algebra of $\overline{\mathcal{F}}$ with respect to its $\mathbb{ZP}$ algebra structure given by the diagram above. We now claim that,
    \begin{equation}\label{eq: section one generator}
        \varphi(\mathcal{X})\subset \mathcal{A}(\overline{\mathbf{t}_0})
    \end{equation}

    Say $f\in \mathcal{A}(\mathbf{t}_0)$, if this claim is true then $\varphi$ restricts to an embedding of $\mathbb{ZP}$ algebras $\varphi_{n,j}:\mathcal{A}(\mathbf{t}_0)\to \mathcal{A}(\overline{\mathbf{t}_0})$ given by $\varphi_{n,j}(f)=\varphi(f)$ since $\mathcal{A}(\mathbf{t}_0)$ is generated by $\mathcal{X}$ as a sub algebra of $\mathcal{F}$. To prove the claim \ref{eq: section one generator}, we will show that for all seeds $\mathbf{t}$, and all indices $k$ corresponding to mutable variables, and all $r\in \{1,...,d_k\}$

    \begin{enumerate}
        \item[(i)] $\varphi\left(\left(u_{k>}^{r}\right)^{(\mathbf{t})} \left(u_{k<}^{d_k-r}\right)^{(\mathbf{t})} \right) = \left(\overline{u}_{k>}^{r}\right)^{(\overline{\mathbf{t}})} \left(\overline{u}_{k<}^{d_k-r}\right)^{(\overline{\mathbf{t}})}$,
        \item[(ii)] $\varphi\left(p_{kr}^{(\mathbf{t})}\left(v_{k>}^{[r]}\right)^{(\mathbf{t})} \left(v_{k<}^{[d_k-r]}\right)^{(\mathbf{t})} \right) = \overline{p_{kr}}^{(\mathbf{\overline{t}})}\left(\overline{v}_{k>}^{[r]}\right)^{(\overline{\mathbf{t}})} \left(\overline{v}_{k<}^{[d_k-r]}\right)^{(\overline{\mathbf{t}})}$.

        \item[(iii)]
        $\varphi(X_k^{(\mathbf{t})})=\overline{X_k}^{(\overline{\mathbf{t}})}$
    \end{enumerate}

    We will assume these three equations hold for an arbitrary seed $\mathbf{t}$ and show that these three equations hold for any adjacent seed $\mathbf{t}'$, say $\mu_l(\mathbf{t})=\mathbf{t}'$. These are true for our initial seeds by construction. Conditions $(i)$ and $(ii)$ imply that $\varphi\left(\theta_k^{(\mathbf{t})}\right) = \overline{\theta}_k^{(\overline{\mathbf{t}})}$ and even more that $\varphi\left(X_k^{(\mathbf{t'})}\right) = \overline{X}_k^{(\overline{\mathbf{t'}})}$. It suffices to show conditions $(i)$ and $(ii)$ hold for $\mathbf{t}'$.
    
    The mutation rule on $B$ commutes with the operation of multiplying a (frozen) column by $n$ to obtain $\overline{B}$. Hence the exponents for the corresponding cluster variables are the same and therefore equation $(i)$ holds for $\mathbf{t}'$ by our assumption that equation $(iii)$ holds for $\mathbf{t}$.

    If $k=\ell$ then by the mutation rule $p_{k,r}^{(\mathbf{t}')}\left(v_{k>}^{[r]}\right)^{(\mathbf{t}')} \left(v_{k<}^{[d_k-r]}\right)^{(\mathbf{t}')}=p_{k,d_k-r}^{(\mathbf{t})}\left(v_{k<}^{[r]}\right)^{(\mathbf{t})} \left(v_{k>}^{[d_k-r]}\right)^{\mathbf{t}}$ and similarly $\overline{p_{k,r}}^{(\mathbf{t}')}\left(\overline{v}_{k>}^{[r]}\right)^{(\mathbf{t}')} \left(\overline{v}_{k<}^{[d_k-r]}\right)^{(\mathbf{t}')}=\overline{p_{k,d_k-r}}^{(\mathbf{t})}\left(v_{k<}^{[r]}\right)^{(\mathbf{t})} \left(\overline{v}_{k>}^{[d_k-r]}\right)^{\mathbf{t}}$, so condition $(ii)$ follows for seed $\mathbf{t}'$ by our assumption that condition $(ii)$ holds for seed $\mathbf{t}$.

    Now suppose that $k\not =\ell$. By the mutation rule $p_{k,r}^{(\mathbf{t}')}=p_{k,r}^{(\mathbf{t})}$ and $\overline{p_{k,r}}^{(\mathbf{t}')}=\overline{p_{k,r}}^{(\mathbf{t})}$ so it follows that $\frac{\overline{p_{k,r}}^{(\overline{\mathbf{t}}')}}{\varphi_{j,n}\left(p_{k,r}^{(\mathbf{t}')}\right)}=\frac{\overline{p_{k,r}}^{(\overline{\mathbf{t}})}}{\varphi_{j,n}\left(p_{k,r}^{(\mathbf{t})}\right)}$. Without loss of generality assume that $B_{k,N+j}^{(\mathbf{t})}>0$ and $B_{k,N+j}^{(\mathbf{t}_0)}>0$ so that if the exchange matrix has a corresponding quiver then there exists an edge from the mutatable vertex $k$ to the frozen $j$ vertex at both the given seed $\mathbf{t}$ and the initial seed. Now consider that
    \begin{equation*}
    \frac{\varphi\left(\left(v_{k>}^{[r]}\right)^{(\mathbf{t})} \left(v_{k<}^{[d_k-r]}\right)^{(\mathbf{t})} \right)}{\left(\overline{u}_{k>}^{r}\right)^{(\overline{\mathbf{t}})} \left(\overline{u}_{k<}^{d_k-r}\right)^{(\overline{\mathbf{t}})}} =\frac{f_j^{ n \left\lfloor \frac{r \cdot B_{k,N+j}^{(\mathbf{t})}}{d_k} \right\rfloor  }}{f_j^{\left\lfloor n \frac{r \cdot B_{k,N+j}^{(\mathbf{t})}}{d_k}\right\rfloor }}
    \end{equation*}
    We now consider two cases based on the sign of $B_{k,N+j}^{(\mathbf{t'})}$. There exists integers $c_1,c_2\in \mathbb{Z}$ such that $B_{k,N+j}^{(\mathbf{t})}=B_{k,N+j}^{(\mathbf{t}_0)}+c_1 d_k$ and $B_{k,N+j}^{(\mathbf{t'})}=B_{k,N+j}^{(\mathbf{t}_0)}+c_2 d_k$. If $B_{k,N+j}^{(\mathbf{t'})}>0$ then it follows that 
    \begin{equation*}
         \frac{\varphi\left(\left(v_{k>}^{[r]}\right)^{(\mathbf{t}')} \left(v_{k<}^{[d_k-r]}\right)^{(\mathbf{t}')} \right)}{\left(\overline{u}_{k>}^{r}\right)^{(\overline{\mathbf{t}}')} \left(\overline{u}_{k<}^{d_k-r}\right)^{(\overline{\mathbf{t}}')}}=\frac{f_j^{ n \left\lfloor \frac{r \cdot B_{k,N+j}^{(\mathbf{t}')}}{d_k} \right\rfloor  }}{f_j^{\left\lfloor n \frac{r \cdot B_{k,N+j}^{(\mathbf{t}')}}{d_k}\right\rfloor }} = \frac{f_j^{ n \left\lfloor \frac{r \cdot \left(B_{k,N+j}^{\mathbf{t}_0}+c_2 d_k\right)}{d_k} \right\rfloor  }}{f_j^{\left\lfloor n \frac{r \cdot \left(B_{k,N+j}^{\mathbf{t}_0}+c_2 d_k\right)}{d_k}\right\rfloor }}= \frac{f_j^{n\cdot r \cdot c_2}f_j^{ n \left\lfloor \frac{r \cdot \left(B_{k,N+j}^{\mathbf{t}_0}\right)}{d_k} \right\rfloor  }}{f_j^{n\cdot r \cdot c_2}f_j^{\left\lfloor n \frac{r \cdot \left(B_{k,N+j}^{\mathbf{t}_0}\right)}{d_k}\right\rfloor }}
    \end{equation*}
    and by similar reasoning it follows that 
    \begin{equation*}
        \frac{f_j^{ n \left\lfloor \frac{r \cdot B_{k,N+j}^{(\mathbf{t})}}{d_k} \right\rfloor  }}{f_j^{\left\lfloor n \frac{r \cdot B_{k,N+j}^{(\mathbf{t})}}{d_k}\right\rfloor }}=\frac{f_j^{ n \left\lfloor \frac{r \cdot \left(B_{k,N+j}^{(\mathbf{t}_0)}\right)}{d_k} \right\rfloor  }}{f_j^{\left\lfloor n \frac{r \cdot \left(B_{k,N+j}^{(\mathbf{t}_0)}\right)}{d_k}\right\rfloor }}
    \end{equation*}
    This implies that condition $(ii)$ holds in this case. If $B_{k,N+j}^{(\mathbf{t'})}<0$ then 
    \begin{equation*}
        \frac{\varphi\left(\left(v_{k>}^{[r]}\right)^{(\mathbf{t}')} \left(v_{k<}^{[d_k-r]}\right)^{(\mathbf{t}')} \right)}{\left(\overline{u}_{k>}^{r}\right)^{(\overline{\mathbf{t}}')} \left(\overline{u}_{k<}^{d_k-r}\right)^{(\overline{\mathbf{t}}')}} =\frac{f_j^{ n \left\lfloor \frac{\left(d_k-r\right) \cdot \left( c_2d_k-B_{k,N+j}^{(\mathbf{t}_0)}\right)}{d_k} \right\rfloor  }}{f_j^{\left\lfloor n \frac{\left(d_k-r\right) \cdot \left( c_2d_k-B_{k,N+j}^{(\mathbf{t}_0)}\right)}{d_k}\right\rfloor }}=\frac{f_j^{n\left(c_2d_k-rc_2+B_{k,N+j}^{(\mathbf{t}_0)}\right)}f_j^{ n \left\lfloor \frac{r \cdot B_{k,N+j}^{(\mathbf{t}_0)})}{d_k} \right\rfloor  }}{f_j^{n\left(c_2d_k-rc_2+B_{k,N+j}^{(\mathbf{t}_0)}\right)}f_j^{\left\lfloor \frac{r \cdot B_{k,N+j}^{(\mathbf{t}_0)})}{d_k} \right\rfloor }}
    \end{equation*}
    which implies that condition $(ii)$ holds for seed $\mathbf{t}'$ regardless of the sign of $B_{k,N+j}^{(\mathbf{t'})}$.

\end{proof}

In the following remark we make the identification of the algebra $\mathcal{A}(\mathbf{t}_0)[f_j^{\frac{1}{n}}]$ with a generalized cluster algebra precise. 

\begin{remark}
    If $\mathcal{X}$ denotes the cluster variables of $\mathcal{A}(\mathbf{x},B,\mathcal{P})$ so that $\mathcal{A}(\mathbf{x},B,\mathcal{P})=\mathbb{ZP}[\mathcal{X}]$ then $\mathcal{A}(\mathbf{x},B,\mathcal{P})[f_j^{\frac{1}{n}}]=\mathbb{ZP}[f_j^{\frac{1}{n}}][\mathcal{X}]$. The structure map of the $\mathbb{ZP}$ algebra $\mathcal{A}(\mathbf{x},B,\mathcal{P})[f_j^{\frac{1}{n}}]$ is then given by the inclusion map $\mathbb{ZP}\to\mathbb{ZP}[f_j^{\frac{1}{n}}]=\mathbb{ZP}[F]\diagup<F^n-f_j>\to \mathcal{A}(\mathbf{x},B,\mathcal{P})[f_j^{\frac{1}{n}}] $

    There is a unique isomorphism of $\mathbb{Z}$-algebras 
    \begin{equation*}
        \Phi:\mathcal{A}(\mathbf{x}, \overline{B}, \overline{\mathcal{P}}) \to \mathcal{A}(\mathbf{x}, B, \mathcal{P})\left[f_j^{\frac{1}{n}}\right]
    \end{equation*}
    such that $\Phi (f_j) =(f_j)^{\frac{1}{n}}$, $\Phi(f_i)=f_i$ whenever $i\not =j$, and $\Phi(\overline{x_k}^{\overline{\mathbf{t}}})=x_k^{\mathbf{t}}$. The map $\Phi$ is an isomorphism of $\mathbb{ZP}$ algebras when its domain and codomain have the previously mentioned $\mathbb{ZP}$ algebra structures. We therefore identify $\mathcal{A}(\mathbf{x}, B, \mathcal{P})\left[f_j^{\frac{1}{n}}\right]$ as the cluster algebra $\mathcal{A}(\mathbf{x}, \overline{B}, \overline{\mathcal{P}})$ with restricted scalars via $\varphi_{j,n}:\mathbb{ZP}\to \mathbb{ZP}\subset \mathcal{A}(\mathbf{x}, \overline{B}, \overline{\mathcal{P}})$
\end{remark}

In the following remark we make our statement that for a "decent" choice of n, adjoining the nth root of a generalized cluster algebra creates a generalized cluster algebra without floor functions in the coefficients of the exchange relations. For each frozen variable $f_j$ and $r\in \mathbb{N}$ recall the Laurent monomial $f_j^{[r]}$ defined in \ref{eq: frozen box} associated to a cluster variable $x_k^{\mathbb{t}}$.

\begin{remark}\label{remark: remove floor functions}
    Let $\mathbf{t}\sim \mathbf{t}_0$ and recall that the coefficients of the inhomogenous polynomial corresponding to $\theta_k^{\mathbf{t}}$ in the variable $\tau_k^{\mathbf{t}}=\left[\frac{ u_{k>}}{u_{k<}} \right]^{\mathbf{t}}$ are $\left[p_{kr}v_{k>}^{[r]}v^{[d_k-r]}_{k<}\right]^{\mathbf{t}}$ where the factors $\left[v_{k>}^{[r]}v^{[d_k-r]}_{k<}\right]^{\mathbf{t}}$ contain floor functions.

    Without loss of generality, say $f_j$ divides $v_{k>}^{\mathbf{t}}$ for some cluster variable $x_k^{\mathbf{t}}$ of $\mathcal{A}(\mathbf{t}_0)$. Whenever the degree $d_k$ of a cluster variable $x_k^{\mathbf{t}}$ divides $n$ then the exchange polynomial $\theta_k^{\bar{\mathbf{t}}}$ will be homogeneous in the variables $\left[f_j^{[1]} u_{k>}\right]^{\overline{\mathbf{t}}}$ and $\left[u_{k<}\right]^{\overline{\mathbf{t}}}$. The corresponding inhomogeneous polynomial in the variable $\tau_k^{\overline{\mathbf{t}}}= \left[ \frac{ f_j^{[1]} u_{k>}}{u_{k<}} \right]^{\overline{\mathbf{t}}}$ will have the coefficients $\left[ \overline{p_{kr}}\frac{v_k^{[r]}v_k^{[d_k-r]}}{\left(v_k^{[r]}v_k^{[d_k-r]} \right)\rvert_{\widehat{f_j}=1}} \right]^{\overline{\mathbf{t}}}$ where $\left(v_k^{[r]}v_k^{[d_k-r]} \right)\rvert_{\widehat{f_j}=1}\in \mathbb{P}$ denotes the Laurent monomial in only the frozen variable $f_j$ obtained by evaluating all frozen variables of $\left[ v_{k>}^{[r]} v_{k<}^{[d_k-r]} \right]^{\overline{\mathbf{t}}}$ at $1$ except $f_j$, hence:
    \begin{equation*}
    \theta_k^{\bar{\mathbf{t}}}= \left[(u_k)^{d_k}(\sum_{r=0}^{d_k} \overline{p_{kr}}\frac{v_k^{[r]}v_k^{[d_k-r]}}{\left(v_k^{[r]}v_k^{[d_k-r]} \right)\rvert_{\widehat{f_j}=1}} \tau_k) \right]^{\overline{t}}
    \end{equation*}

    The coefficients of the inhomogenous polynomial of $\theta_k^{\overline{t}}$ no longer have floor functions which contain the frozen variable $f_j$. It is this sense in which adjoining roots of a frozen variable can get rid of floor functions in the coefficients of a generalized exchange relation.
\end{remark}

\begin{definition}
    Let $\mathcal{A}(\mathbf{x}, B, \mathcal{P})$ be a generalized cluster algebra over ground ring $\mathbb{Z} \mathbb{P}$. Let $D = \prod_i d_i$ be the total multiplicity of $\mathcal{A}(\mathbf{x}, B, \mathcal{P})$. We define the embedding of $\mathbb{ZP}$ algebras $\widetilde{\tau_D}$ as the composition of every $\varphi_{j,D}$ over each frozen variable $f_j$:
    \begin{align*}   
    \circ_{i=1}^M( \varphi_{N+j,D}) =: \widetilde{\tau_D}: \mathcal{A}(\mathbf{x}, B, \mathcal{P}) \hookrightarrow \mathcal{A}(\mathbf{x}, \widetilde{B}, \widetilde{\mathcal{P}}).
    \end{align*}
    here the new generalized cluster algebra $\mathcal{A}(\widetilde{B},\widetilde{\mathcal{P}})$ has its $\mathbb{ZP}$ algebra structure given by the following diagram,

    \begin{equation*}
    \adjustbox{scale=0.75}{\begin{tikzcd}
	{\mathbb{ZP}} & {\mathcal{A}^g(B,\mathcal{P})} && {\mathcal{A}(\overline{B},\overline{\mathcal{P}})} && \bullet &&&& \bullet && {\mathcal{A}^g(\widetilde{B},\widetilde{\mathcal{P}})}
	\arrow["\iota", from=1-1, to=1-2]
	\arrow["{\varphi_{1,D}}", from=1-2, to=1-4]
	\arrow["{\varphi_{2,D}}", from=1-4, to=1-6]
	\arrow["{…}", from=1-6, to=1-10]
	\arrow["{\varphi_{M,D}}", from=1-10, to=1-12]
\end{tikzcd}}
    \end{equation*}
\end{definition}

\begin{definition}\label{def: generalized coefficients}
    The coefficients $\rho_{kr} \in \widetilde{\mathcal{P}}$ are called \emph{generalized coefficients}. 
\end{definition}

\begin{remark}
By equations \ref{eq: section one relations} and \ref{eq: q monomial generalization} it follows that 
\begin{equation*}
    \rho_{kr} =\widetilde{\tau_D}(p_{kr}) \cdot \widetilde{\tau_{\frac{D}{d_k}}} (\frac{1}{q_{kr}})=(p_{kr})^D (\frac{1}{q_{kr}})^{\frac{D}{d_k}}
\end{equation*}
\end{remark}

\begin{remark}\label{def: generalized roots}
    For each mutable vertex $x_k$ of $\mathcal{A}(\mathbf{x}, \widetilde{B}, \widetilde{\mathcal{P}})$, by the preceding remark \ref{remark: remove floor functions}, the generalized exchange polynomial $\theta_k$ is homogeneous in the variables $u_{k>}v_{k>}^{[1]}$ and $u_{k<} v_{k<}^{[1]}$. Let $\tau_k := \frac{u_{k>} v_{k>}^{[1]}}{u_{k<} v_{k<}^{[1]}}$ then the inhomogeneous polynomial $\eta (\tau_k)$ correponding to $\theta_k$ has the coefficients $\rho_{kr}$, that is, $\eta(\tau_k) = \sum_{r=0}^{d_k} \rho_{kr} \cdot (\tau_{k})^r \in \mathbb{ZP}[\tau_k]$. Since the ground ring $\mathbb{ZP}$ is an integral domain, say $\mathfrak{F}$ is the algebraic closure of the field of fraction $\text{Frac}(\mathbb{ZP})$ then $\eta(\tau_k) = \prod_{r=1}^{d_k} (\tau_k  - \mathfrak{R}^k_r)$ where each $\mathfrak{R}^k_r \in \mathfrak{F}$ is a root of $\eta(\tau_k)\in \mathfrak{F}[\tau_k]$. Now for each $r$, say $s_r,t_r\in \mathfrak{F}$ such that $\mathfrak{R}^k_r = -\frac{t_r}{s_r}$ then we have,
    \begin{equation*}
        \eta(\tau_k)= \prod_{r=1}^{d_k} (s_r  + t_r \cdot \tau_k)
    \end{equation*}
    Thus $\theta_k$ splits into a product of linear homogeneous polynomials when considered as a polynomial over $\mathfrak{F}$:
    \begin{align}\label{eq: generalized roots}
        \theta_k = \left(u_{k<} v_{k<}^{[1]}\right)^{d_k} \left( \prod_{r=1}^{d_k} (s_r  + t_r \cdot \tau_k)\right)  = \prod_{r=1}^{d_k} (s_r \cdot u_{k<}v_{k<}^{[1]} + t_r \cdot u_{k>}v_{k>}^{[1]}) \in \mathfrak{F}[x_1,...,\widehat{x_k},...,x_N]
    \end{align}
    These relations between the coefficients $\{s_r,t_r\}_{r=1}^{d_k} \subset \mathfrak{F}$ of the linear homogeneous factors of $\theta_k$, as a polynomial over $\mathfrak{F}$, and the generalized coefficients $\rho_{kr}\in  \mathfrak{F}$ will be used to define an ideal $\mathcal{I}$ later.
\end{remark}
\color{black}

\subsection{Construction of an unfolding $\mathcal{B}$ of $B$}\label{subsection: Transformations}
Consider a generalized cluster algebra $\mathcal{A}^g=\mathcal{A}^g(x,B,\mathcal{P})$ and say $\widehat{D}$ is the diagonalizer of the extended-exchange matrix $B$ and $\mathcal{D}$ is the psuedo-rank. In this section, we consider the extended-exchange matrix $B$ of a generalized seed and construct a new extended-exchange matrix $\mathcal{B}$ of size $\mathcal{D} \times (3\mathcal{D}+M)$. See the figure \ref{fig: matrix construction good figure} following this for a depiction of this matrix for a special case where $\mathcal{B}$ corresponds to a folded quiver $\mathcal{Q}$; the construction for the general case is similar. 

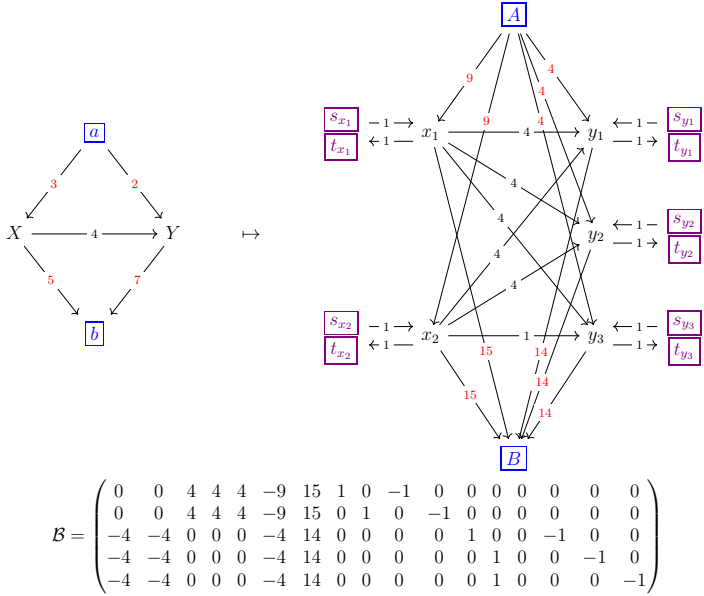
\begin{figure}
    \centering
    \begin{subfigure}[b]{0.98\textwidth}
         \centering
        \begin{minipage}[c]{0.63\textwidth}
        \adjustbox{scale=0.9,center}{$B = \left(\begin{array}{cc|cc}  
 0 & 8 & -3 & 5 \\
-12 & 0  &-2 & 7 
\end{array}\right)$ and $\widehat{B} = \left(\begin{array}{cc|cc}  
 0 & 4 &-3 & 5 \\
 -4 & 0  &-2 & 7  
\end{array}\right)$}
  \end{minipage}\hfill
  \begin{minipage}[c]{0.37\textwidth}
    \caption{An example of an exchange matrix $B$ associated with a generalized seed. Here $\widehat{B}$ is the modified exchange matrix associated with the choice of divisors $D_X=2$ and $D_Y=3$ of the rows of $B$}
  \end{minipage}
     \end{subfigure}
     \begin{subfigure}[b]{0.98\textwidth}
         \centering
         %\lipsum[1]
        \begin{minipage}[c]{0.61\textwidth}
    \adjustbox{scale=0.6,center}{%
        \begin{tikzcd}
	&&&&&& {\color{blue}\boxed{A}} \\
	\\
	& {\color{blue}\boxed{a}} &&& {{\color{violet}\boxed{s_{x_1}}\atop \color{violet}\boxed{t_{x_1}}}} & {x_1} && {y_1} & {{\color{violet}\boxed{s_{y_1}}\atop \color{violet}\boxed{t_{y_1}}}} \\
	X && Y & \mapsto &&&& {y_2} & {{\color{violet}\boxed{s_{y_2}}\atop \color{violet}\boxed{t_{y_2}}}} \\
	& {\color{blue}\boxed{b}} &&& {{\color{violet}\boxed{s_{x_2}}\atop \color{violet}\boxed{t_{x_2}}}} & {x_2} && {y_3} & {{\color{violet}\boxed{s_{y_3}}\atop \color{violet}\boxed{t_{y_3}}}} \\
	\\
	&&&&&& {\color{blue}\boxed{B}}
	\arrow["\color{red}9"{description}, from=1-7, to=3-6]
	\arrow["\color{red}4"{description, pos=0.4}, from=1-7, to=3-8]
	\arrow["\color{red}4"{description, pos=0.3}, from=1-7, to=4-8]
	\arrow["\color{red}9"{description, pos=0.3}, from=1-7, to=5-6]
	\arrow["\color{red}4"{description, pos=0.3}, from=1-7, to=5-8]
	\arrow["\color{red}3"{description}, from=3-2, to=4-1]
	\arrow["\color{red}2"{description}, from=3-2, to=4-3]
	\arrow["1"{description, pos=0.4}, shift left=2, from=3-5, to=3-6]
	\arrow["1"{description, pos=0.6}, shift left=2, from=3-6, to=3-5]
	\arrow["4"{description, pos=0.6}, from=3-6, to=3-8]
	\arrow["4"{description}, from=3-6, to=4-8]
	\arrow["4"{description, pos=0.4}, from=3-6, to=5-8]
	\arrow["\color{red}15"{description, pos=0.7}, from=3-6, to=7-7]
	\arrow["1"{description, pos=0.6}, shift right=2, from=3-8, to=3-9]
	\arrow["\color{red}14"{description, pos=0.7}, from=3-8, to=7-7]
	\arrow["1"{description, pos=0.4}, shift right=2, from=3-9, to=3-8]
	\arrow["4"{description}, from=4-1, to=4-3]
	\arrow["\color{red}5"{description}, from=4-1, to=5-2]
	\arrow["\color{red}7"{description}, from=4-3, to=5-2]
	\arrow["1"{description, pos=0.6}, shift right=2, from=4-8, to=4-9]
	\arrow["\color{red}14"{description, pos=0.7}, from=4-8, to=7-7]
	\arrow["1"{description, pos=0.4}, shift right=2, from=4-9, to=4-8]
	\arrow["1"{description, pos=0.4}, shift left=2, from=5-5, to=5-6]
	\arrow["4"{description, pos=0.4}, from=5-6, to=3-8]
	\arrow["4"{description}, from=5-6, to=4-8]
	\arrow["1"{description, pos=0.6}, shift left=2, from=5-6, to=5-5]
	\arrow["1"{description, pos=0.6}, from=5-6, to=5-8]
	\arrow["\color{red}15"{description}, from=5-6, to=7-7]
	\arrow["1"{description, pos=0.6}, shift right=2, from=5-8, to=5-9]
	\arrow["\color{red}14"{description, pos=0.7}, from=5-8, to=7-7]
	\arrow["1"{description, pos=0.4}, shift right=2, from=5-9, to=5-8]
\end{tikzcd}
        }
  \end{minipage}
        \hfill
         \begin{minipage}[c]{0.37\textwidth}
    \caption{To the left is the quiver $Q$ associated to $\widehat{B}$. To the right is the folded quiver $\mathcal{Q}$. The blue vertices indicate the original frozen variables and what they are transformed into, the violent vertices indicate the frozen vertices introduced in the last step of our construction, and the red arrows indicate arrows between mutatable and frozen vertices whose number is multiplied by special factors determined by the total multiplicity $\mathcal{D}$}
  \end{minipage}
     \end{subfigure}
    \begin{subfigure}[b]{0.98\textwidth}
        \centering
        \begin{minipage}[c]{0.61\textwidth}
    \adjustbox{scale=0.6,center}{$\mathcal{B} = \begin{pmatrix}
            0 & 0 & 4 & 4 & 4 & -9 & 15 & 1 & 0 & -1 & 0 & 0 & 0 & 0 & 0 & 0 & 0 \\
            0 & 0 & 4 & 4 & 4 & -9 & 15 & 0 & 1 & 0 & -1 & 0 & 0 & 0 & 0 & 0 & 0 \\
            -4 & -4 & 0 & 0 & 0 & -4 & 14 & 0 & 0 & 0 & 0 & 1 & 0 & 0 & -1 & 0 & 0 \\
            -4 & -4 & 0 & 0 & 0 & -4 & 14 & 0 & 0 & 0 & 0 & 0 & 1 & 0 & 0 & -1 & 0 \\
            -4 & -4 & 0 & 0 & 0 & -4 & 14 & 0 & 0 & 0 & 0 & 0 & 1 & 0 & 0 & 0 & -1
        \end{pmatrix}
            $}
  \end{minipage}\hfill
  \begin{minipage}[c]{0.37\textwidth}
    \caption{Here $\mathcal{B}$ is the exchange matrix constructed out of $B$}
  \end{minipage}   
    \end{subfigure}
     \caption{A special case of our construction of $\mathcal{B}$ when it has an  associated quiver $\mathcal{Q}$.}
    \label{fig: matrix construction good figure}
     \captionsetup{width=.98\linewidth}
\end{figure} 

We will specify a sub-matrices of $\mathcal{B}$ as triples $(\mathcal{B},[m_1, m_2], [n_1,n_2])$ where the intervals $[m_1,m_2]$ and $[n_1,n_2]$ denotes the underlying coordinates of the sub-matrix of $\mathcal{B}$. We will decompose $\mathcal{B}$ into three sub-matrices denoted by 
\begin{equation}\label{eq: submatrix 1}
    \hbox{Principal} (\mathcal{B}): =(\mathcal{B},[1,\mathcal{D}] ,[1,\mathcal{D}])
\end{equation}
and 
\begin{equation}\label{eq: submatrix 2}
    \mathcal{B}_S: = (\mathcal{B},[1,\mathcal{D}],[\mathcal{D}+1, \mathcal{D}+M])
\end{equation}
and 
\begin{equation}\label{eq: submatrix 3}
    \mathcal{B}_I := (\mathcal{B},[1,\mathcal{D}],[\mathcal{D}+M+1,3\mathcal{D}+M])
\end{equation}
so that $\hbox{Principal}(\mathcal{B})$ is an unfolding of $\hbox{Principal}(B)$ in the sense of \ref{def: unfolding}. We will prove that this unfolding relationship holds through any sequence of composite mutations. These three sub-matrices will be constructed out of double constant blocks of sub-matrices, and we prove that this double constant block structure holds through sequences of composite mutations. Even more,  $\mathcal{B}_S$ will have the same size as a grid of blocks as $\hbox{Slack}(\mathcal{B})$ does as a matrix, and similarly $\hbox{Principal}(\mathcal{B})$ will have the same size as a grid of blocks as $\hbox{Principal}(B)$ does as a matrix. Both block matrices will be constructed out of constant blocks. 

The values of these constant blocks of $\mathcal{B}_S$ will be determined up to a consistent factor by the values of the entries of $\hbox{Slack}(B)$ through any sequence of composite mutations and corresponding mutations. Likewise, the values of the constant blocks of $\hbox{Principal}(\mathcal{B})$ will be proved to be determined up to a consistent factor by the values of the entries of $\hbox{Principal}(B)$ through any sequence of composite mutations and corresponding mutations.

\subsubsection{Construction of $\mathcal{B}$} \label{subsubsection: unfold}
\label{subsubsection: exchange matrix model}

Let $\mathbb{0}^{N\times M}$ and $\mathbb{1}^{N\times M}$ denote the $N \times M$  integer matrices with a single value across all their entries given by $0$ and $1$, respectively, and let $I_n$ denote the  $n\times n $ identity matrix.

\begin{enumerate}
    \item Let $\mathcal{B}$ be a $\mathcal{D} \times (3\mathcal{D}+M)$ matrix with integer entries so that $\mathcal{B}\in \mathbb{Z}^{\mathcal{D} \times (3\mathcal{D}+M)}$. Consider the sub-matrices from \ref{eq: submatrix 1}, \ref{eq: submatrix 2}, and \ref{eq: submatrix 3}. 
    
    Let ${}^P \mathcal{B}$ be the block matrix whose underlying matrix is $\hbox{Principal}(\mathcal{B})$ and which is an $N\times N$ grid of blocks where the $(i,j)$ block, denoted as $({}^P\mathcal{B})^{i,j}$ is a $d_i\times d_j$ matrix. Similarly, let ${}^S \mathcal{B}$ be the block matrix whose underlying matrix is $ \mathcal{B}_S$ and which is an $N\times N$ grid of blocks where the $(i,j)$ block is an $d_i\times 1$ matrix. Finally, let $I$ be a block matrix whose underlying matrix is $\mathcal{B}_I$ and which is an $N \times N$ grid of blocks where the $(i,j)$ block is an $d_i \times 2d_j$ matrix. In addition, let the block $I^{i,j}$ be itself a $1\times 2$ grid of blocks where the $(1,1)$ block is denoted as ${}^1 I^{i,j}$ and the $(1,2)$ block is denoted by ${}^2 I^{i,j}$, so that in this notation,
    \begin{equation*}
        I^{i,j}= [{}^1 I^{i,j}, {}^2 I^{i,j}]
    \end{equation*}
    and we refer to these as the \textit{Components} of the block $I^{i,j}$.

    Throughout this work, we will reference these various groups of columns associated with $\mathcal{B}$, and now we denote these groups of columns for later use. Let $\mathcal{D}^i$ denote the group of columns underlying the $i^{th}$ block column of ${}^P\mathcal{B}$ and $F$ denote the group of columns underlying ${}^S \mathcal{B}$. Let $W^i$ denote the index set of the columns underlying the $i^{th}$ block column of $I$,  $T^i$ denote the index set of the underlying columns of the first components ${}^1 I^{-,i}$ and $S^i$ denote the index set of the underlying columns of the second components ${}^2 I^{-,i}$ so that essentially $W^i = T^i \sqcup S^i$ for all $i$.
    
    \item Say $i,j\in \{1,...,N\}$ then let $({}^P \mathcal{B})^{i,j}$ be the constant block with constant entry $\frac{1}{d_i} B_{i,j}$, that is,
    \begin{equation}\label{eq: hadamard factor 1}
        ({}^P \mathcal{B})^{i,j} = \frac{1}{d_i} (\hbox{Principal}(B))_{i,j} \mathbbm{1}^{d_i\times d_j}
    \end{equation}
    Note that this assigns a constant value to every block of ${}^P \mathcal{B}$
    
    \item Say $i \in \{1,...,N\}$ and $j = \{1,...,M\} $ then let $({}^S \mathcal{B})^{i,j}$ be the constant block with constant entry $\frac{D}{d_i} B_{i,M+j}$, that is,
    \begin{equation}\label{eq: hadamard factor 2}
        ({}^S \mathcal{B})^{i,j} = \frac{D}{d_i} (\hbox{Slack}(B))_{i,j} \mathbbm{1}^{d_i\times 1}
    \end{equation}
    Note that this assigns a constant value to every block of ${}^S \mathcal{B}$
    
    \item Say $i,j \in \{1,...,N\}$ if $i\not = j$, then let $I^{i,j}$ be the constant block given by the constant value 0 across all entries. Otherwise, if $i=j$, then let ${}^1(I^{i,i})$ be the diagonal matrix  ${}^1(I^{i,i})=1 \cdot I_{d_i} \in \mathbb{Z}^{d_i\times d_i}$, similarly, let ${}^2(I^{i,i}) =-1 \cdot I_{d_i} \in  \mathbb{Z}^{d_i\times d_i}$ . The steps above fully describe the matrix $I$. 
\end{enumerate}

Since these steps uniquely determine a value for each $\mathcal{B}$ entry, these steps specify the matrix $\mathcal{B}$.

\subsubsection{Group mutation class of $\mathcal{B}$}\label{subsubsection:group mutation class}

In this section, we establish a correspondence between the mutation class of $B$ and the group mutation class of $\mathcal{B}$. In what follows, for each natural number $n$, let $[n]:=\{1,..,n\}$

Notice that ${}^P \mathcal{B}$ is a square grid of blocks with zero blocks on the diagonal since $B$  skew-symmetrizable, that is,
\begin{equation}\label{eq: valid group mutation}
    ({}^P \mathcal{B})^{i,i} = \mathbb{0}^{d_i \times d_i}
\end{equation}

It is a basic fact of matrix mutations that whenever an entry of the exchange matrix is zero, such as $B_{i,j}=0$, then the corresponding mutations $\mu_i$ and $\mu_j$ commute on the exchange matrix $B$ that is 
\begin{equation}\label{eq: commute condition}
    (\mu_i \circ \mu_j ) (B) = (\mu_j \circ \mu_i ) (B)
\end{equation}
There is a simple explanation of the statement above in the case that the principal part of $B$ is skew-symmetric (so that a quiver $Q=(Q_0,Q_1)$ may represent $B$). If $B_{i,j}=0$ then there are no arrows between the associated vertices $v_i,v_j\in  Q_0$, and hence the mutation $\mu_i$ does not change the set of vertices which are incident to the vertex $v_j$. This implies that $\mu_i$ and $\mu_j$ commute since a quiver mutation at a vertex is entirely determined by the edges incident to said vertex. 

Given the size correspondence between ${}^P \mathcal{B}$ and $\hbox{Principal}(B)$, for any $j \in \{1,...,N\}$ we consider the group mutation $(\widehat{\mu_{j}})(\mathcal{B}):= (\circ_{ j'\in \mathcal{D}^j} \mu_{j'}) (\mathcal{B})$

By (\ref{eq: commute condition}) and (\ref{eq: valid group mutation}), it follows that each of the mutations that comprise the group mutation commutes with the other. For any finite sequence $s=(s_1,..,s_n)\in [N]^n$ of length $n\in \mathbb{N}$ we have a unique associated mutation of $B$:
\begin{equation}\label{def: mutation sequence}
    \mu_{s} (B) := (\mu_{s_n}\circ...\circ \mu_{s_1})(B)
\end{equation}
and a unique associated sequence of group mutations of $\mathcal{B}$,
\begin{equation}\label{def: group mutation sequence}
    \widehat{\mu_{s}} (\mathcal{B}) := (\widehat{\mu_{s_n}}\circ...\circ \widehat{\mu_{s_1}})(\mathcal{B})
\end{equation}
Thus, we have a one to one correspondence between matrices mutation equivalent to $B$ and matrices, which are group mutation equivalent to $\mathcal{B}$ by sending $B$ to $\mathcal{B}$ and also sending $\mu_{s} (B)$ to $\widehat{\mu_{s}} (\mathcal{B})$ for all $s\in [N]^n$ and $n\in \mathbb{N}$.

\tiny
\begin{figure}[b]
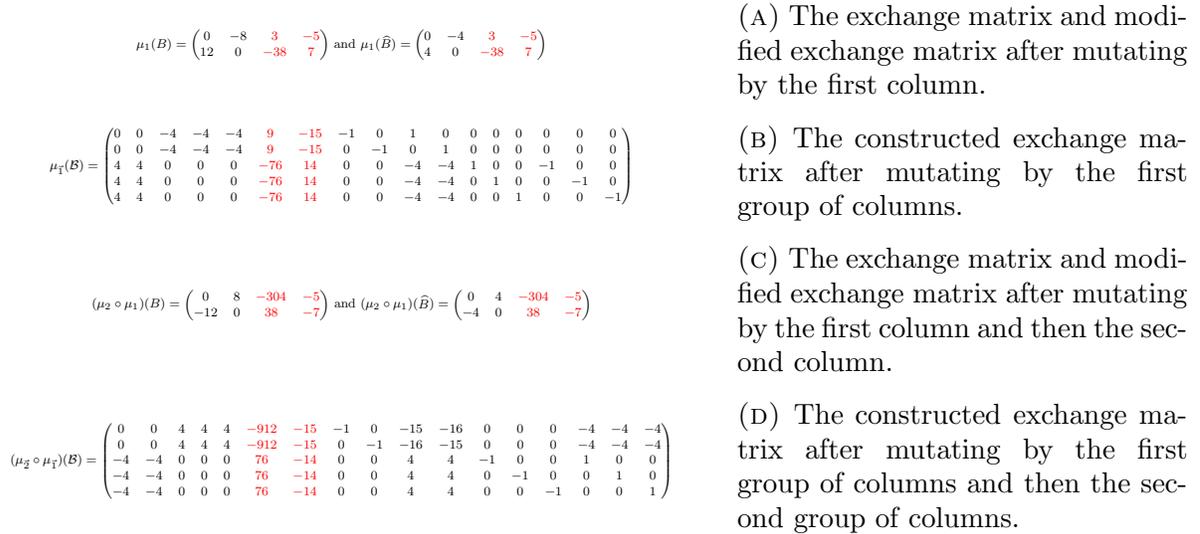

    \centering
    %here
    \begin{subfigure}[b]{0.98\textwidth}
        \centering

        \begin{minipage}[c]{0.61\textwidth}
    \adjustbox{scale=0.6,center}{$\mu_{1}(B)=\begin{pmatrix}
        0 & -8 & \color{red}3 & \color{red}-5\\
        12 & 0 & \color{red}-38 & \color{red}7
        \end{pmatrix}$ and $\mu_{1}(\widehat{B})=\begin{pmatrix}
        0 & -4 & \color{red}3 & \color{red}-5\\
        4 & 0 & \color{red}-38 & \color{red}7
        \end{pmatrix}$}
  \end{minipage}\hfill
  \begin{minipage}[c]{0.37\textwidth}
    \caption{The exchange matrix and modified exchange matrix after mutating by the first column.}
  \end{minipage}
    \end{subfigure}

    \begin{subfigure}[b]{0.98\textwidth}
        \centering

        \begin{minipage}[c]{0.61\textwidth}
    \adjustbox{scale=0.6,center}{$ \mu_{\vec{1}}(\mathcal{B})=\begin{pmatrix}
        0 & 0 & -4 & -4 & -4 & \color{red}9 & \color{red}-15 & -1 & 0 & 1 & 0 & 0 & 0 & 0 & 0 & 0 & 0\\
        0 & 0 & -4 & -4 & -4 & \color{red}9 & \color{red}-15 & 0 & -1 & 0 & 1 & 0 & 0 & 0 & 0 & 0 & 0\\
        4 & 4 & 0 & 0 & 0 & \color{red}-76 & \color{red}14 & 0 & 0 & -4 & -4 & 1 & 0 & 0 & -1 & 0 & 0 \\ 
        4 & 4 & 0 & 0 & 0 & \color{red}-76 & \color{red}14 & 0 & 0 & -4 & -4 & 0 & 1 & 0 & 0 & -1 & 0\\
        4 & 4 & 0 & 0 & 0 & \color{red}-76 & \color{red}14 & 0 & 0 & -4 & -4 & 0 & 0 & 1 & 0 & 0 & -1
        \end{pmatrix}$}
  \end{minipage}\hfill
  \begin{minipage}[c]{0.37\textwidth}
    \caption{The constructed exchange matrix after mutating by the first group of columns.}
  \end{minipage}
    \end{subfigure}

    \begin{subfigure}[b]{0.98\textwidth}
        \centering

        \begin{minipage}[c]{0.61\textwidth}
    \adjustbox{scale=0.6,center}{$(\mu_2 \circ \mu_1)(B)=\begin{pmatrix}
        0 & 8 & \color{red}-304 & \color{red}-5\\
        -12 & 0 & \color{red}38 & \color{red}-7
        \end{pmatrix}$ and $(\mu_2 \circ \mu_1)(\widehat{B})=\begin{pmatrix}
        0 & 4 & \color{red}-304 & \color{red}-5\\
        -4 & 0 & \color{red}38 & \color{red}-7
        \end{pmatrix}$}
  \end{minipage}\hfill
  \begin{minipage}[c]{0.37\textwidth}
    \caption{The exchange matrix and modified exchange matrix after mutating by the first column and then the second column. }
  \end{minipage}
    \end{subfigure}

    \begin{subfigure}[b]{0.98\textwidth}
        \centering

        \begin{minipage}[c]{0.61\textwidth}
    \adjustbox{scale=0.6,center}{$(\mu_{\vec{2}}\circ \mu_{\vec{1}})(\mathcal{B})=\begin{pmatrix}
        0 & 0 & 4 & 4 & 4 & \color{red}-912 & \color{red}-15 & -1 & 0 & -15 & -16 & 0 & 0 & 0 & -4 & -4 & -4\\
        0 & 0 & 4 & 4 & 4 & \color{red}-912 & \color{red}-15 & 0 & -1 & -16 & -15 & 0 & 0 & 0 & -4 & -4 & -4\\
        -4 & -4 & 0 & 0 & 0 & \color{red}76 & \color{red}-14 & 0 & 0 & 4 & 4 & -1 & 0 & 0 & 1 & 0 & 0 \\ 
        -4 & -4 & 0 & 0 & 0 & \color{red}76 & \color{red}-14 & 0 & 0 & 4 & 4 & 0 & -1 & 0 & 0 & 1 & 0\\
        -4 & -4 & 0 & 0 & 0 & \color{red}76 & \color{red}-14 & 0 & 0 & 4 & 4 & 0 & 0 & -1 & 0 & 0 & 1
        \end{pmatrix}$}
  \end{minipage}\hfill
  \begin{minipage}[c]{0.37\textwidth}
    \caption{The constructed exchange matrix after mutating by the first group of columns and then the second group of columns. }
  \end{minipage}
         
    \end{subfigure}
    
    \caption{The exchange matrices from figure (\ref{fig: matrix construction good figure}) after a sequence of mutations and the corresponding sequence of group mutations. Here the non-mutatable or slack columns of the exchange matrix $B$ and its associated sub-matrix of $\mathcal{B}$ are red.}
    \label{fig: Group mutation example}
\end{figure}
\normalsize

We will now prove two key lemmas which will describe how the blocks of $\mathcal{B}$ change under group mutations. See figure (\ref{fig: Group mutation example}) for an example of how $\mathcal{B}$ may change under group muations.

By the construction in section \ref{subsection: Transformations}, the constant value across any entry of a given block of ${}^P \mathcal{B}$ is given by a corresponding entry of $\hbox{principal}(B)$ times some special factors, as in \ref{eq: hadamard factor 1}, and likewise the constant value across any block of ${}^S \mathcal{B}$ is given by the value of a corresponding entry of $\hbox{Slack}(B)$ times some special factor, as in \ref{eq: hadamard factor 2}. Using the correspondence \ref{def: group mutation sequence} we will prove that these relationships hold between any sequence of mutations of $B$ and the associated group mutation sequence of $\mathcal{B}$.

\begin{lemma}[Hadamard Conditions]\label{lemma: group mutation rule}
 Let $\ell \in \{1,...,M\}$, $i,j\in \{ 1,...,N\}$ with $i\not = j$, and $s=(s_1,...,s_n)$ with $1\leq s_i \leq N$. Consider the sequence of mutations $\mu_s$  defined on $B$ by $s$ as in \ref{def: mutation sequence} and the associated sequence of group mutations $\widehat{\mu_s}$ on $\mathcal{B}$ as in \ref{def: group mutation sequence}. Then the following conditions hold for the blocks of ${}^P\mathcal{B}$ and ${}^S\mathcal{B}$,
 \begin{equation}\label{eq: Hadamard Condition 1}
     (\widehat{\mu_{s}} ({}^P \mathcal{B}))^{i,j}= \frac{1}{d_i}\mu_{s} (B)_{i,j} \mathbbm{1}^{d_i \times d_j}=   \frac{1}{d_i}\mu_{s} (\hbox{Principal}(B))_{i,j} \mathbbm{1}^{d_i \times d_j}
 \end{equation}
and 
 \begin{equation}\label{eq: Hadamard Condition 2}
      (\widehat{\mu_{s}} ({}^S \mathcal{B}))^{i,\ell } = \frac{D}{d_i}\mu_{s} (B)_{i,N+\ell}\mathbbm{1}^{d_i \times 1}=\frac{D}{d_i}\mu_s(\hbox{Slack}(B))_{i,\ell}\mathbbm{1}^{d_i \times 1}
 \end{equation}
\end{lemma}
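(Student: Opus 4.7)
The plan is to prove both Hadamard conditions simultaneously by induction on the length $n$ of the sequence $s=(s_1,\ldots,s_n)$. The base case $n=0$ is immediate from the construction in section \ref{subsubsection: exchange matrix model}, since equations \eqref{eq: hadamard factor 1} and \eqref{eq: hadamard factor 2} assert exactly that ${}^P\mathcal{B}^{i,j}$ and ${}^S\mathcal{B}^{i,\ell}$ are constant blocks with the prescribed values. So the real content is the inductive step: assuming the lemma for $\widehat{\mu_s}(\mathcal{B})$ and $\mu_s(B)$, I need to show it persists after one further group mutation $\widehat{\mu_k}$ on the left and the matching mutation $\mu_k$ on $B$.

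First I would check that the block mutation formula \eqref{eq : grouping_mutation} actually applies at each step: the two conditions of Definition \ref{def: unfolding} must hold and the diagonal principal blocks must vanish so that the $d_k$ column mutations comprising $\widehat{\mu_k}$ genuinely commute. Both are automatic by the inductive hypothesis, since constant blocks have a well-defined sign and $({}^P\mathcal{B})^{i,i}=\tfrac{1}{d_i}B_{i,i}\mathbbm{1}=0$ because every mutation of $B$ remains skew-symmetrizable. The whole computation then rests on the identity $\mathbbm{1}^{d_i\times d_k}\mathbbm{1}^{d_k\times d_j}=d_k\,\mathbbm{1}^{d_i\times d_j}$: the product of two constant blocks is again constant, but acquires an extra factor of $d_k$, which is precisely what the normalizations $\tfrac{1}{d_i}$ and $\tfrac{D}{d_i}$ in the construction are engineered to absorb.

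With these preliminaries in place, the inductive step is a direct substitution. When $i=k$ or $j=k$ the block simply negates, matching the negation in $\mu_k(B)$. Otherwise, feeding the constant-block values into \eqref{eq : grouping_mutation} yields for the principal case a new constant value
\[
\tfrac{1}{d_i}\!\left(B_{i,j}+\tfrac{|B_{i,k}|B_{k,j}+B_{i,k}|B_{k,j}|}{2}\right)=\tfrac{1}{d_i}\mu_k(B)_{i,j},
\]
and the slack case gives the analogous formula with $\tfrac{D}{d_i}$ in place of $\tfrac{1}{d_i}$; these are exactly the contents of \eqref{eq: Hadamard Condition 1} and \eqref{eq: Hadamard Condition 2}.

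The main obstacle I anticipate is not the algebra itself but guaranteeing that the induction is self-contained, i.e. that the principal and slack blocks evolve among themselves without being contaminated by the auxiliary $I$-blocks $\mathcal{B}_I$. Fortunately \eqref{eq : grouping_mutation} couples a $(Y,Z)$ block only to $(Y,\mathcal{D}^k)$ and $(\mathcal{D}^k,Z)$, and since $\mathcal{D}^k$ is a mutable group, the $I$-column groups $W^j$ never appear as an intermediate index in a product that feeds back into ${}^P\mathcal{B}$ or ${}^S\mathcal{B}$. Once this closure observation is pinned down, the induction closes cleanly and both Hadamard conditions follow.
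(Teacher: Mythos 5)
Your proposal is correct and follows essentially the same route as the paper's proof: induction on the length of the mutation sequence, with the base case read off from the construction and the inductive step carried out by substituting the constant-block values into the block mutation formula \eqref{eq : grouping_mutation} and using the identity $\mathbbm{1}^{d_i\times d_k}\mathbbm{1}^{d_k\times d_j}=d_k\,\mathbbm{1}^{d_i\times d_j}$ to absorb the factor of $d_k$ into the normalizations $\tfrac{1}{d_i}$ and $\tfrac{D}{d_i}$. Your additional remarks on the commutativity of the individual mutations within a group and on the closure of the principal and slack blocks under mutation are points the paper leaves implicit, but they do not change the argument.
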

We call \ref{eq: Hadamard Condition 1} and \ref{eq: Hadamard Condition 2} the Hadamard conditions $1$ and $2$ respectively since they describe persistent factors of the values of each block of ${}^P\mathcal{B}$ and ${}^S \mathcal{B}$ through sequences of group mutations, which reminded us of a Hadamard product between matrices. For instance, the theorem says that given a sequence $\widehat{\mu_s}$ of group mutations of $\mathcal{B}$, we can determine the mutated matrices $\widehat{\mu_s}({}^P\mathcal{B})$ and $\widehat{\mu_s}({}^S\mathcal{B})$ by instead considering 
\begin{equation*}
    \mu_s (B)\odot H 
\end{equation*}
where $\odot$ denotes the Hadamard product between matrices and $H$ is a matrix of the same size given by 
\begin{equation*}
    H=\begin{bmatrix}
        \frac{1}{d_1} & \frac{1}{d_1} & ... & \frac{1}{d_1} & \frac{D}{d_1} & ... &\frac{D}{d_1}\\
        \frac{1}{d_2} & \frac{1}{d_2} & ... & \frac{1}{d_2} & \frac{D}{d_2}& ... &\frac{D}{d_2}\\
        ...  & ... & ... & ... & ... & ... & ...\\
        \frac{1}{d_N} & \frac{1}{d_N} & ... & \frac{1}{d_N} & \frac{D}{d_N}& ... &\frac{D}{d_N} 
    \end{bmatrix}
\end{equation*}

\begin{proof}
It suffices to show that for any sequence $s\in [N]^n$ of length $n\in \mathbb{Z}_{\geq 0}$ the lemma holds. We will prove the lemma by using the method of mathematical induction. Both of these conditions are met between the initial extended exchange matrix $B$ and $\mathcal{B}$ by the construction in \ref{subsubsection: exchange matrix model}. We will show that these conditions hold after a single mutation and omit the inductive step as it will follow by the same proof.

Note that $\mathbb{1}^{d_i \times d_k} \cdot \mathbb{1}^{d_k \times d_j} = d_k \cdot \mathbb{1}^{d_i \times d_j}.$ and consider the group mutation $\mu_k$ for some $k\in [N]$ and also consider a block ${}^P\mathcal{B}^{i,j}$. Say $\mathbb{1}:=\mathbb{1}^{d_i \times d_j}$

By \ref{eq : grouping_mutation} if $k=i$ or $k=j$ then condition \ref{eq: Hadamard Condition 1} holds. In addition, if $k\not = i$ and $k\not = j$ then
\begin{gather*}
    \mu_k({}^P \mathcal{B}^{i,j})={}^P \mathcal{B}^{i,j}+ \frac{1}{2} \left[ \hbox{sgn}({}^P\mathcal{B}^{i,k}){}^P\mathcal{B}^{i,k}{}^P\mathcal{B}^{k,j}+\hbox{sgn}({}^P\mathcal{B}^{k,j}){}^P\mathcal{B}^{i,k}{}^P\mathcal{B}^{k,j} \right]\\
    = \frac{1}{d_i} B_{i,j} \mathbb{1}+\frac{1}{2}\left[ d_k\hbox{sgn}(B_{i,k})\frac{1}{d_i}B_{i,k} \frac{1}{d_k}B_{k,j} \mathbb{1}+d_k \hbox{sgn}(B_{k,j})\frac{1}{d_i}B_{i,k}\frac{1}{d_k}B_{k,j} \mathbb{1} \right]  =\frac{1}{d_i} \mu_k(B) \mathbb{1}
\end{gather*}
and so condition \ref{eq: Hadamard Condition 1} holds after a single mutation.

By the construction in \ref{subsubsection: exchange matrix model} each block of ${}^S \mathcal {B}$ is a constant matrix and hence has a well defined sign value in $\{-1,0,1\}$ which is the common sign value  across all its entries. . Thus, a similar equation to \ref{eq : grouping_mutation} holds for the blocks ${}^S B ^{i,\ell}$, in particular,
\begin{align} 
        \widehat{\mu_k}\left( {}^S\mathcal{B}^{i,\ell}  \right) = \begin{cases} - {}^S\mathcal{B}^{i,\ell}  & \text{$k = i$} \\ {}^S\mathcal{B}^{i,\ell} + \frac{1}{2}\left(\text{sgn}({}^P\mathcal{B}^{i,k})({}^P\mathcal{B}^{i,k})({}^S\mathcal{B}^{k,\ell} )+ \text{sgn}\left({}^S\mathcal{B}^{k,\ell}\right)({}^P\mathcal{B}^{i,k})({}^S\mathcal{B}^{k,\ell} )\right) & \hbox{otherwise.} \end{cases}
\end{align}

Say $\mathbb{J}:=\mathbb{1}^{d_i\times 1}$ and note that $\mathbb{1}^{d_i \times d_k} \cdot \mathbb{1}^{d_k \times 1} = d_k \mathbb{J} $. Now if $k=i$ or $k=\ell$ the condition \ref{eq: Hadamard Condition 2} holds. On the other hand, if $k\not = i$ and $k\not = \ell$ then,
\begin{gather*}
    \widehat{\mu_k}\left( {}^S\mathcal{B}^{i,\ell}  \right)  =  \frac{D}{d_i} B_{i,N+\ell} \mathbb{J}+\frac{1}{2}\left[ d_k\hbox{sgn}(B_{i,k})\frac{1}{d_i}B_{i,k} \frac{D}{d_k} B_{k,N+\ell} \mathbb{J}+d_k \hbox{sgn}(B_{k,j})\frac{1}{d_i}B_{i,k} \frac{D}{d_k} B_{k,N+\ell} \mathbb{J} \right] \\
    = \frac{D}{d_i} \mu_k ( B)_{i,N+\ell} \mathbb{J}
\end{gather*}
and so condition \ref{eq: Hadamard Condition 2} holds in this case as well.

\end{proof}
\color{black}

\begin{remark}
    Condition \ref{eq: Hadamard Condition 1} implies that the $\hbox{Principal}(\mathcal{B})$ is an unfolding of $\hbox{Principal}(B)$ in the sense of definition \ref{def: unfolding}.
\end{remark}

By the construction of \ref{subsubsection: exchange matrix model}, the components of each block $I^{i,j}$ are double constant matrices. In particular, if $i\not = j$ so that $I^{i,j}$ is an off-diagonal block of $I$, then $I^{i,j}$ is genuinely a constant matrix in the sense it's off-diagonal value and its diagonal value are the same. We now prove that this double constant structure on the blocks of $I$ holds through any sequence of group mutations.

\begin{lemma}[Double Constant Condition] \label{lem: double_const_cond}
    Consider the sub-matrix
\begin{equation*}
    I = (\mathcal{B},[1,\mathcal{D}],[\mathcal{D}+M+1,3\mathcal{D}+M]) 
\end{equation*} as a block matrix as described in \ref{subsection: Transformations}. The following relations hold and are preserved via group mutations.

For each pair of blocks $({}^1 I^{i,j}, {}^2 I^{i,j})$, there exist some $a_{ij}, c_{ij} \in \mathbb{Z}$ such that 
\begin{align*}
    &{}^1 I^{i,j} = - {}^2 I^{i,j} + a_{ij} \cdot \mathbb{1}^{d_i \times d_j}, \\[2ex]
    &{}^1 I^{i,j} = \begin{cases}
        c_{ij} \cdot \mathbb{1}^{d_i \times d_j} & \text{if } i \neq j \\
        c_{ii} \cdot \mathbb{1}^{d_i \times d_i} \pm I_{d_i} & \text{if } i = j
    \end{cases}
\end{align*}
\end{lemma}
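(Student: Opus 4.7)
The plan is to proceed by induction on the length of the group mutation sequence applied to the initial matrix $\mathcal{B}$, using the block mutation rule \eqref{eq : grouping_mutation} together with the Hadamard conditions from Lemma~\ref{lemma: group mutation rule}. For the base case, the construction in Section~\ref{subsubsection: unfold} gives $I^{i,j}=\mathbb{0}^{d_i\times d_j}$ when $i\neq j$ and $I^{i,i}=[\,I_{d_i},\,-I_{d_i}\,]$, so both conditions hold initially with $a_{ij}=c_{ij}=0$, with the $+$ sign on ${}^1I^{i,i}$ and the $-$ sign on ${}^2I^{i,i}$.

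For the inductive step, apply $\widehat{\mu_k}$ to a block ${}^aI^{i,j}$. Since the column group indexing ${}^aI^{i,j}$ is a frozen group ($T^j$ or $S^j$), the ``$K=Z$'' branch of \eqref{eq : grouping_mutation} never occurs, so I need only split on whether $k=i$. In \textbf{Case A} ($k=i$), both components flip sign, giving ${}^aI^{i,j}_{\mathrm{new}}=-{}^aI^{i,j}$; condition (a) is preserved with $a'_{ij}=-a_{ij}$, condition (b) with $c'_{ij}=-c_{ij}$, and the $\pm$ sign on $I_{d_i}$ simply flips when $i=j$.

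\textbf{Case B} ($k\neq i$) is the substantive case. Because ${}^aI^{k,k}$ can carry mixed-sign entries in the diagonal sub-case, I would work at the entry level rather than invoking the block-level sign function. By the Hadamard condition, ${}^P\mathcal{B}^{i,k}$ is the constant block $\tfrac{B_{i,k}}{d_i}\mathbb{1}^{d_i\times d_k}$; by the inductive hypothesis, ${}^aI^{k,j}$ is either strictly constant (when $k\neq j$) or of the form $c\,\mathbb{1}\pm I_{d_k}$ (when $k=j$). A direct computation shows that the added term at position $(y,z)$ equals $\tfrac{1}{2d_i}$ times a linear combination of the column sum and the signed column sum of ${}^aI^{k,j}$ at column $z$. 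For the shape $c\,\mathbb{1}\pm I_{d_k}$, every column has identical column sum $cd_k\pm 1$ and identical signed column sum $|c|(d_k-1)+|c\pm 1|$, so the added term depends on neither $y$ nor $z$, hence is a constant matrix. Adding it to ${}^aI^{i,j}$ therefore preserves the form demanded by condition~(b) (the $\pm I_{d_i}$ piece is untouched when $i=j$), and summing the updates for $a=1,2$ while invoking ${}^1I^{k,j}+{}^2I^{k,j}=a_{kj}\mathbb{1}$ (so the $\pm I_{d_k}$ pieces cancel pairwise) yields condition~(a) with an updated constant $a'_{ij}$.

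The main obstacle I anticipate is the bookkeeping in the $k=j$ sub-case of Case~B: one must track that the individual $\pm I_{d_k}$ contributions from ${}^1I^{k,k}$ and ${}^2I^{k,k}$ both land cleanly in the constant part of each updated diagonal block and then cancel in the pairwise sum producing $a'_{ij}\mathbb{1}$. All of this reduces to the elementary identities $\mathbb{1}^{d_i\times d_k}\mathbb{1}^{d_k\times d_j}=d_k\,\mathbb{1}^{d_i\times d_j}$ and $\mathbb{1}^{d_i\times d_k}I_{d_k}=\mathbb{1}^{d_i\times d_k}$, so the calculation is routine once the cases are properly enumerated.
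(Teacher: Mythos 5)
Your proposal is correct and follows essentially the same route as the paper's proof: induction on the group mutation sequence, the case split on whether the mutated group equals the row group, and the observation that in the remaining case the update adds a constant integer multiple of $\mathbb{1}^{d_i\times d_j}$ because ${}^P\mathcal{B}^{i,k}$ is constant (Hadamard condition) and ${}^aI^{k,j}$ is (double) constant. The only difference is cosmetic: you verify the constancy of the added term at the entry level via column sums, whereas the paper argues at the block level using the block sign function (which is in fact well defined here, since the entries of $c\,\mathbb{1}\pm I_{d_k}$ are two consecutive integers and hence never have strictly opposite signs).
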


\begin{proof}
    It suffices to show that for any sequence $s\in [N]^n$ of length $n\in \mathbb{Z}_{\geq 0}$ the lemma holds. We will prove the lemma by using the method of mathematical induction. Both of these conditions are met between the initial blocks ${}^1 I^{i,j}$ and ${}^2 I^{i,j}$ by the construction in \ref{subsubsection: exchange matrix model} where all  $a_{i,j}=0$ and all $c_{i,j}=0$. We will show that these conditions hold after a single group mutation and omit the inductive step as it will follow by the same proof.

These relations imply that each block ${}^1 I^{i,j}$ has either all non positive entries, or all non negative entries, or all zero entries. Thus, we can always consider the sign of one of these blocks, which may take values $\{-1,0,1\}$. Therefore, given a group mutation $\widehat{\mu_k}$, the block ${}^1 I^{i,j}$ mutates by an equation similar to \ref{eq : grouping_mutation}, in particular,
\begin{align} \label{eq: component one rule}
        \widehat{\mu_k}({}^1 I^{i,j })  = \begin{cases} - {}^1 I^{i,j}  & \text{$k = i$ } \\ {}^1 I^{i,j} + \frac{1}{2}\left(\text{sgn}({}^PB^{i,k}) {}^P\mathcal{B}^{i,k} \cdot  {}^1 I^{k,j} + \text{sgn}\left( {}^1 I^{k,j}\right) {}^P \mathcal{B}^{i,k} \cdot {}^1 I^{k,j}\right) &  \hbox{otherwise}  \end{cases}
\end{align}
and similarly 
\begin{align} \label{eq: component two rule}
        \widehat{\mu_k}({}^2 I^{i,j })  = \begin{cases} - {}^2 I^{i,j}  & \text{$k = i$ or $k=\ell$} \\ {}^2 I^{i,j} + \frac{1}{2}\left(\text{sgn}(B^{i,k})\mathcal{B}^{i,k} \cdot  {}^2 I^{k,j} + \text{sgn}\left({}^2 I^{k,j}\right) \mathcal{B}^{i,k} \cdot {}^2 I^{k,j}\right) & \hbox{otherwise.} \end{cases}
\end{align}

Notice that $\mathbb{1}^{d_i \times d_k} \cdot \mathbb{1}^{d_k \times d_j} = d_k \cdot \mathbb{1}^{d_i \times d_j}$. 

We first show these relations hold between pairs of blocks $({}^1I^{k,j},{}^2I^{k,j})$. By these mutation rules,  $\widehat{\mu}_k\left({}^1 I^{k,k}\right) = -I_{d_k}$ and  $\widehat{\mu}_k\left({}^2 I^{k,k}\right) = I_{d_k}$ and whenever $k\not = j$  since ${}^1 I^{k,j}=0={}^2 I^{k,j}$ we have  $\widehat{\mu}_k\left({}^1 I^{k,j}\right) = 0  \cdot \mathbb{1}^{d_k\times d_j}$ and  $\widehat{\mu}_k\left({}^2 I^{k,j}\right) = 0  \cdot \mathbb{1}^{d_k\times d_j}$ . 

We will now show these relations hold between pairs of blocks $({}^1I^{i,j},{}^2I^{i,j})$ for $i\not =k$. First say $j=k$, then ${}^P \mathcal {B}^{i,k}$ is a constant matrix and ${}^1 I^{k,k}$ is a double constant matrix. Since the product of a constant matrix with a double constant matrix is a constant matrix it follows that the mutation rules add integer multiples of $\mathbb{1}^{d_i \times d_k}$ to ${}^1 I^{i,j}$ and also to ${}^2 I^{i,j}$, hence $\widehat{\mu_k}({}^1 I^{i,i }) =  c_1  \mathbb{1}^{d_i \times d_i} + I_{d_i}$ and $\widehat{\mu_k}({}^2 I^{i,i }) =  c_2  \mathbb{1}^{d_i \times d_i} - I_{d_i}$ for some integers $c_1,c_2 \in \mathbb{Z}$. Then it follows that  $\widehat{\mu_k}({}^1 I^{i,i }) = - \widehat{\mu_k}({}^2 I^{i,i }) +(c_1+c_2)\mathbb{1}^{d_i \times d_j}$ and so the relations hold. Lastly, say $j\not=k$, then ${}^P \mathcal {B}^{i,k}$ is a constant matrix and ${}^1 I^{k,j}$ is a constant matrix so their matrix product is also a constant matrix and it follows that the mutation rule adds integer multiples of $\mathbb{1}^{d_i\times d_j}$ to ${}^1I^{i,j}$ and also to ${}^2 I^{i,j}$. By the previous reasoning, it follows that these relations hold for the pair of blocks $({}^1I^{i,j},{}^2I^{i,j})$ for $i\not =k$. 

We have shown that these relations hold after a single group mutation. 
\end{proof}

\color{black}

\subsection{The Folded Cluster Algebra associated to $\mathcal{B}$}\label{subsection:CCA}
Say $\mathcal{A}^g(\mathbf{x},B,\mathcal{P})$ is a generalized cluster algebra with exchange matrix $B$; consider the associated exchange matrix $\mathcal{B}$ which is compatible with $B$ in the sense of the Hadamard and double constant conditions discussed in  \ref{subsubsection:group mutation class}. In this section, we will consider a (folded) cluster algebra $\mathcal{A}^c$ generated by $\mathcal{B}$, say the ground ring of this cluster algebra is $\mathbb{Z \widehat{P}}$. The ground ring $\mathbb{Z \widehat{P}}$ will have an embedded copy of the ground ring $\mathbb{ZP}$ of $\mathcal{A}^g$ . We will consider an ideal $\mathcal{I} \lhd\mathbb{Z\widehat{P}}$ so that in the quotient $\mathbb{Z\widehat{P}}\diagup \mathcal{I }$ the classes of frozen variables associated to the columns of the submatrix $\mathcal{B}_{I}$ have the same relations to the classes of the generalized coefficients as the relations between the coefficients of the linear homogeneous factors of each generalized exchange polynomial of $\mathcal{A}^g(B)$, as in remark  \ref{def: generalized roots}, and the generalized coefficients. We consider the extension $\mathcal{I}^e$ of the ideal $\mathcal{I}$  to $\mathcal{A}^c$ and for seeds of the cluster algebra $\mathcal{A}^c$, which are group mutation equivalent to the initial seed, we will then prove a formula that expresses the product of the classes of exchange polynomials in $\mathcal{A}^c\diagup \mathcal{I}^e$ for any group of cluster variables in terms of these generalized coefficients. 

Consider the ambient field $\mathcal{F}' =\mathbb{Q}(X_1,...,X_{3\mathcal{D}+M})$ in $3\mathcal{D}+M$ variables. Fix the cluster $\vec{y}=(X_1,...,X_{\mathcal{D}})$ and the ground ring $\mathbb{D}=\mathbb{Z}[(X_{\mathcal{D}+1})^{\pm 1},...,(X_{3\mathcal{D}+M})^{\pm 1}]$. Recall that the exchange matrix $\mathcal{B}$ from \ref{subsection: Transformations} has $3\mathcal{D}+M$ columns, and its principal part was a $\mathcal{D}$ by $\mathcal{D}$ matrix. Consider the (traditional) cluster algebra $\mathcal{A}^c = \mathcal{A}(\vec{y},\mathcal{B})= \mathbb{D}[\mathcal{Y}]\subset \mathcal{F}'$ of rank $\mathcal{D}$ over the ring $\mathbb{D}$ of total multiplicity one, where $\mathcal{Y}$ is the set of cluster variables formed by mutations of the initial seed $(\vec{y},\mathcal{B})$. 

Throughout this section and the rest of this work, we often relabel the indeterminates of $\mathcal{F}'$ to reference the columns of $\mathcal{B}$. In particular, we relabel the ordered set of indeterminates as,
\begin{gather}\label{eq: chain label decomposition}
    \{X_1,...,X_{3\mathcal{D} +M}\} \\= \{ y_{1,1},...,y_{1,d_1}\}\sqcup ...\sqcup\{ y_{N,1},...,y_{N,d_N}\}\sqcup \{f_1,...,f_M\}\\ \sqcup \{t_{1,1},...,t_{1,d_1}\}\sqcup  \{s_{1,1},...,s_{1,d_1}\} \sqcup ... \sqcup \{t_{N,1},...,t_{N,d_N}\}\sqcup  \{s_{N,1},...,s_{N,d_1}\}
\end{gather}
Sometimes, we even consider a relabeling of certain factors of this disjoint union into a new ordered set, such as 
\begin{equation*}
    \{y_1,...,y_{\mathcal{D}}\} =  \{ y_{1,1},...,y_{1,d_1}\}\sqcup ...\sqcup\{ y_{N,1},...,y_{N,d_N}\}
\end{equation*}
or even 
\begin{equation*}
    \{t_1,...,t_{\mathcal{D}}\} = \{t_{1,1},...,t_{1,d_1}\} \sqcup ... \sqcup \{t_{N,1},...,t_{N,d_N}\}
\end{equation*}
or 
\begin{equation*}
    \{s_1,...,s_{\mathcal{D}}\} = \{s_{1,1},...,s_{1,d_1}\} \sqcup ... \sqcup \{s_{N,1},...,s_{N,d_N}\}
\end{equation*}
Recall the sets $\mathcal{D}^j$, $F$, $T^j$, and $S^j$ denoting the sets of columns underlying the different collections of block columns of $\mathcal{B}$ from \ref{subsection: Transformations}. In this notation the columns of $\mathcal{D}^j$ corresponds to the indeterminants $\{y_{j,1},...,y_{j,d_j}\}$, the columns of $F$ correspond to the indeterminants $\{f_1,...,f_M\}$, the columns of $T^j$ correspond to the indeterminants $\{t_{j,1},...,t_{j,d_j}\}$, and the columns of $S^j$ correspond to the indeterminants $\{s_{j,1},...,s_{j,d_j}\}$.

In what follows we will denote the first $n$ natural numbers as $[n]:= \{1,...,n\}$.

For the given generalized cluster algebra $\mathcal{A}^g$ recall the associated generalized coefficients $\{\rho_{k,r}\}$ (definition \ref{def: generalized coefficients}) . Let $\mathcal{I}\lhd\mathbb{Z\widehat{P}}$ be the ideal so that in the quotient the classes of frozen variables associated to the columns of the submatrix $\mathcal{B}_{I}$ have the same relations to the classes of the generalized coefficients as the relations between the coefficients of the linear homogeneous factors of each generalized exchange polynomial of $\mathcal{A}^g(B)$, as in remark  \ref{def: generalized roots}, and the generalized coefficients, more precisely, let $\mathcal{I}$ be generated by elements of the form

\begin{align}\label{eq: main ideal relation}
    \rho_{kr} - \sum_{ \substack{I\cup J = \mathcal{D}^k \\ |I| = r  \\ I\cap J = \emptyset}
} \left(\prod_{i\in I} s_i \cdot \prod_{j \in J} t_j \right).
\end{align}

\begin{remark}
    Consider the field $\mathfrak{F}$ as in remark \ref{def: generalized roots} which is the algebraic closure of the field of fractions of $\mathbb{ZP}$ and also consider the roots $\mathfrak{R}^k_{r}$ of the exchange polynomials. The subring 
    \begin{equation*}
        \mathbb{ZP}[\mathfrak{R}]:=\mathbb{ZP}[\mathfrak{R}_1^1,...,\mathfrak{R}_{d_1}^1,...,\mathfrak{R}_1^n,...,\mathfrak{R}_{d_N}^n]
    \end{equation*}
    of $\mathfrak{F}$ obtained by adjoining all the roots to $\mathbb{ZP}$ can also be considered as a subring of the quotient $\mathbb{Z\widehat{P}}\diagup \mathcal{I}$. There is a unique embedding of $\mathbb{Z}$ algebras $\varphi:\mathbb{ZP}[\mathfrak{R}]\to \mathbb{Z\widehat{P}}\diagup \mathcal{I}$ such that $\varphi(f_i)=[F_i]_{\mathcal{I}}$ and $\varphi( \mathfrak{R}_j^k)= [-\frac{t_{k,j}}{s_{k,j}}]_{\mathcal{I}}=[-t_{k,j} \prod_{i\in [d_k]\setminus \{j\}} s_{k,i}]_{\mathcal{I}}$, and this is also a ring embedding. We therefore identify the classes $[-t_{k,j} \prod_{i\in [d_k]\setminus \{j\}} s_{k,i}]_{\mathcal{I}}$ with the associated root $\mathfrak{R}_j^k$. It follows that the classes $[-t_{k,j} \prod_{i\in [d_k]\setminus \{j\}} s_{k,i}]_{\mathcal{I}}$ have the same relations to the generalized coefficients as the relations between the roots and the generalized coefficients, hence,
    \begin{equation*}
        \prod_{r=1}^{d_k} [1  + t_{k,r} \prod_{i\in [d_k]\setminus \{r\}} s_{k,i}]_{\mathcal{I}}=\sum_{r=0}^{d_k} [\rho_{kr}]_{\mathcal{I}}
    \end{equation*}
\end{remark}

\color{black}

\subsubsection{Group Mutation Equivalent Seeds} Say $\mathbf{s}_0$ is the initial seed of $\mathcal{A}^c$ and,
\begin{equation*}
    \mathbf{s}= (\widehat{\mu_{n_p}} \circ ... \circ \widehat{\mu_{n_1}}) (\mathbf{s}_0),
\end{equation*}
is a seed obtained by a group mutation sequence as in (\ref{def: group mutation sequence}) and with $\mathcal{B}'$ is the exchange matrix obtained by the action of the corresponding matrix mutation sequence on $\mathcal{B}$, that is $\mathcal{B}'= (\widehat{\mu_{n_p}} \circ ... \circ \widehat{\mu_{n_1}}) (\mathcal{B})$.
Now consider a particular group $i \in \{1,...,N\}$ and say $i',i'' \in \mathcal{D}^i$ so that the associated cluster variables are in the same group at this seed $\mathbf{s}$, that is, $y_{i'}^{\mathbf{s}},y_{i''}^{\mathbf{s}} \in \mathcal{A}^c_{\mathbf{s}'}$. Recall that each cluster variable has an associated set of Laurent monomials and Cluster monomials, which the cluster variables index.

Hadamard condition 1 of section \ref{subsubsection:group mutation class} implies that the cluster monomials $(\mu_{i'>})^{\mathbf{s}}$ and $(\mu_{i''>})^{\mathbf{s}}$ associated to cluster variables in the same group are equal. Likewise, Hadamard condition 2 implies that the Laurent monomials of cluster variables in the same group have a particular common divisor. The following lemma will make this precise.

\begin{remark}[Abuse of Notation]\label{remark: Abuse of notation}
    In the following lemma, by the Hadamard conditions \ref{eq: Hadamard Condition 1} and \ref{eq: Hadamard Condition 2}, we will abuse the notation where we use the symbol ${}^P \mathcal{B}^{i,k}$ to refer to the constant value across all the entries of this sub-matrix, even though we have defined ${}^P \mathcal{B}^{i,k}$ to be a sub-matrix. This notation is evident from the context and has a well-defined meaning as ${}^P \mathcal{B}^{i,k}$ is a constant sub-matrix and will remain constant through group mutations via the Hadamard conditions.
\end{remark}

\begin{lemma}\label{corollary: group monomials}
Let $\mathbf{s}$ be a seed that is group mutation equivalent to the initial seed $\mathbf{s}_0$, with $\mathcal{B}'$ being this seed's corresponding exchange matrix. Say $i\in \{ 1,...,N\}$ and $i' \in \mathcal{D}^i$ so that  $y_{i'}^{\mathbf{s}}$ is the associated cluster variable in the $i^{th}$ group. Then the following holds, 
\begin{equation*}
    (u_{i'>})^{\mathbf{s}} = \prod_{\substack{ 1\leq k\leq N \\ ({}^P \mathcal{B}')^{i,k} > 0}} (\prod_{j\in \mathcal{D}^i} y_j)^{({}^P \mathcal{B}')^{i,k}} \hbox{ and } (u_{i'<})^{\mathbf{s}} = \prod_{\substack{ 1\leq k\leq N \\ ({}^P \mathcal{B}')^{i,k} < 0}} (\prod_{j\in \mathcal{D}^i} y_j)^{({}^P \mathcal{B}')^{i,k}}
\end{equation*}
\textit{In other words, two cluster variables in the same group have the same cluster monomials}

In addition, the stable monomials corresponding to a given pair of cluster variables in the same group, such as $(v_{i'>})^{\mathbf{s}}$ and $(v_{i''>})^{\mathbf{s}}$, share a common divisor given by
\begin{equation*}
     \prod_{\substack{m\in [M]\\ ({}^S \mathcal{B}')^{i,m} >0}} (f_{m})^{({}^S \mathcal{B}')^{i,m} } 
\end{equation*}
and similarly the stable monomials $(v_{i'<})^{\mathbf{s}}$ and $(v_{i''<})^{\mathbf{s}}$ share a common divisor given by
\begin{equation*}
    \prod_{\substack{m\in [1 , M  ]\\ ({}^S \mathcal{B}')^{i,m} <0}} (f_{m})^{({}^S \mathcal{B}')^{i,m} } 
\end{equation*}
and this holds for all $i', i''\in \mathcal{D}^i$.
\end{lemma}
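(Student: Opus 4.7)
The plan is to derive both claims directly from the Hadamard conditions established in Lemma \ref{lemma: group mutation rule}, which propagate through any sequence of group mutations and therefore apply to the exchange matrix $\mathcal{B}'$ associated with the seed $\mathbf{s}$. Since the folded cluster algebra $\mathcal{A}^c$ has total multiplicity $1$, the cluster and stable monomials attached to $y_{i'}^{\mathbf{s}}$ are simply
\[
u_{i'>}^{\mathbf{s}} = \prod_{\substack{1 \le j \le \mathcal{D} \\ \mathcal{B}'_{i',j}>0}} y_j^{\mathcal{B}'_{i',j}}, \qquad v_{i'>}^{\mathbf{s}} = \prod_{\substack{\mathcal{D}<\ell \le 3\mathcal{D}+M \\ \mathcal{B}'_{i',\ell}>0}} x_\ell^{\mathcal{B}'_{i',\ell}},
\]
and analogously for the $<$ versions. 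The proof is then a matter of bookkeeping the index sets against the block decomposition.

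For the cluster monomials, I first observe that the mutable index set $\{1,\ldots,\mathcal{D}\}$ decomposes into the block-column groups $\mathcal{D}^1 \sqcup \cdots \sqcup \mathcal{D}^N$. By Hadamard condition 1 (equation \ref{eq: Hadamard Condition 1}), for any $i' \in \mathcal{D}^i$ and $j \in \mathcal{D}^k$ the entry $\mathcal{B}'_{i',j}$ is constant on the block and equals $({}^P\mathcal{B}')^{i,k}$, using the abuse of notation from Remark \ref{remark: Abuse of notation}. In particular the sign of $\mathcal{B}'_{i',j}$ depends only on the pair of group indices $(i,k)$, so that
\[
\{\, j : \mathcal{B}'_{i',j} > 0 \,\} \;=\; \bigsqcup_{k :\, ({}^P\mathcal{B}')^{i,k}>0} \mathcal{D}^k,
\]
and this set is independent of the particular choice of $i' \in \mathcal{D}^i$. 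Regrouping the factors of $u_{i'>}^{\mathbf{s}}$ by the value of $k$ yields the stated formula; the same argument produces $u_{i'<}^{\mathbf{s}}$. Since the resulting expression is independent of $i' \in \mathcal{D}^i$, any two cluster variables in the same group have identical cluster monomials.

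For the stable monomials, the slack index set decomposes as $F \sqcup \bigsqcup_j (T^j \sqcup S^j)$. Hadamard condition 2 (equation \ref{eq: Hadamard Condition 2}) tells me that the contribution to $v_{i'>}^{\mathbf{s}}$ coming from the columns in $F$ equals
\[
\prod_{\substack{m \in [M] \\ ({}^S\mathcal{B}')^{i,m} > 0}} f_m^{({}^S\mathcal{B}')^{i,m}},
\]
depending only on the group index $i$ and not on $i' \in \mathcal{D}^i$. The remaining factors of $v_{i'>}^{\mathbf{s}}$ arise from the $T^j$ and $S^j$ columns, whose entries are controlled by the double-constant structure of Lemma \ref{lem: double_const_cond} and genuinely depend on $i'$ (through the $\pm I_{d_i}$ terms on the diagonal blocks). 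Consequently the displayed expression appears as a common factor of $v_{i'>}^{\mathbf{s}}$ for every $i' \in \mathcal{D}^i$, which is precisely the claim. The $<$ case is symmetric.

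I do not anticipate any real obstacle, since the lemma is essentially a reformulation of the Hadamard conditions in the language of cluster and stable monomials. The only point requiring care is cleanly separating the $F$ slack columns (for which condition 2 gives an exact block value shared across $\mathcal{D}^i$) from the $T^j, S^j$ slack columns (where the double-constant structure still allows $i'$-dependent contributions), so that the statement is correctly phrased as ``common divisor'' for the stable monomials but as genuine equality for the cluster monomials.
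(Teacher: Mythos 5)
Your proposal is correct and follows essentially the same route as the paper: both arguments reduce the claim to the Hadamard conditions of Lemma \ref{lemma: group mutation rule}, regroup the mutable columns by the blocks $\mathcal{D}^k$ for the cluster monomials, and isolate the $F$ columns (versus the $T^j,S^j$ columns) to exhibit the common divisor of the stable monomials. If anything, your version is slightly cleaner in that it applies the Hadamard conditions directly to $\mathcal{B}'$ for an arbitrary group mutation sequence, whereas the paper nominally restricts to a single group mutation before appealing to the same conditions.
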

\begin{proof}
We will prove the lemma for a single composite/group mutation, and the proof for a general finite sequence of mutations will follow by a similar argument. In particular, say $\mathcal{B}' = \widehat{\mu_{k}} (\mathcal{B})$ and $\mathbf{s}=\widehat{\mu_k}(\mathbf{s}_0)$.

We first consider the part of this lemma relating to the cluster monomials. Let $i \in [N]$ and $i' \in \mathcal{D}^i$, the associated cluster monomials to $i'$ is given by
\begin{equation*}
    u_{i'>}:= \prod_{\substack{  k\in [\mathcal{D}] \\ \mathcal{B}_{i',k}' > 0}}y_{k}^{\mathcal{B}_{i',k}'} = \prod_{k\in [N]}\prod_{\substack{k'\in \mathcal{D}^k \\ \mathcal{B}_{i',k'}' > 0}}y_{k'}^{\mathcal{B}_{i',k'}'}
\end{equation*}
By the Hadamard condition \ref{eq: Hadamard Condition 1}, whenever $k',k'' \in \mathcal{D}^k$ then we have $\mathcal{B}_{i',k'}' = \mathcal{B}_{i',k''}'$. Keeping in mind that the block $({}^P\mathcal{B}')^{i,k}$ has a constant value across all its entries and using this constant value, for all $k' \in \mathcal{D}^k$ we have the following expression,
\begin{equation*}
    \mathcal{B}_{i',k'}' = ({}^P\mathcal{B}')^{i,k}
\end{equation*}
we make sense of this statement using the constant value across all the entries of  $({}^P\mathcal{B}')^{i,k}$. Now, in this simplified notation, we may write,
\begin{equation*}
    u_{i'>} = \prod_{\substack{k\in [N]\\ ({}^P \mathcal{B}')^{i,k} > 0}} (\prod_{k'\in \mathcal{D}^k} y_{k'})^{({}^P \mathcal{B}')^{i,k}}
\end{equation*}
where again $({}^P\mathcal{B})^{i,k}$ denotes the constant value across the block guaranteed by the Hadamard condition 1, and so the lemma follows. Note that by the Hadamard condition 1 whenever $i,k\in \{1,...,N\}$ and we consider  $i',i'' \in \mathcal{D}^i$ and $k'\in \mathcal{D}^k$ then,
\begin{equation*}
    \mathcal{B}_{i',k'} = \mathcal{B}_{i'',k'}
\end{equation*}
then it follows by the expression for the cluster monomials that,
\begin{equation*}
    u_{i' >} = \prod_{\substack{  1 \leq k\leq \mathcal{D} \\ \mathcal{B}_{i',k}' > 0}}y_{k}^{\mathcal{B}_{i',k}'}=\prod_{\substack{  1 \leq k\leq \mathcal{D} \\ \mathcal{B}_{i'',k}' > 0}}y_{k}^{\mathcal{B}_{i'',k}'}=u_{y_{i''} >}
\end{equation*}
The case for $u_{i' <}$ is similar.

We now consider the part of this lemma regarding the stable monomials. The stable monomial associated with $i'$ is given by
\begin{gather*}
    v_{i' >}:= \prod_{\substack{ k\in \{\mathcal{D}  +1,...,3\mathcal{D}+M \}\\ \mathcal{B}_{i',k} > 0}}X_{i}^{\mathcal{B}_{i',k}} = (\prod_{\substack{m\in [M]\\ \mathcal{B}_{i',m+\mathcal{D}}>0}} (f_{m})^{\mathcal{B}_{i',m+\mathcal{D}}}) (\prod_{\substack{ k\in  \{\mathcal{D} +M +1 ,..., 3 \mathcal{D}+M\} \\ \mathcal{B}_{i',k} > 0}}(X_k)^{\mathcal{B}_{i',k}})
\end{gather*}
Consider the following factor of this stable monomial,
\begin{equation*}
     \prod_{\substack{m\in [M]\\ \mathcal{B}_{i',m+\mathcal{D}}>0}} (f_{m})^{\mathcal{B}_{i',m+\mathcal{D}}}
\end{equation*}
Since $i'\in \mathcal{D}^i$ and $(m+\mathcal{D})\in [\mathcal{D}+1,\mathcal{D}+M]$, it follows the $(i',m+\mathcal{D})$ entry of $\mathcal{B}'$ is an entry of the constant block $({}^S \mathcal{B}')^{i,m}$. Then, by the Hadamard condition $2$, it follows $\mathcal{B}_{i',m+\mathcal{D}}' =  ({}^S \mathcal{B}')^{i,m} $. Now we have that
\begin{equation*}
    \prod_{\substack{m\in [M]\\ \mathcal{B}_{i',m+\mathcal{D}}>0}} (f_{m})^{\mathcal{B}_{i',m+\mathcal{D}}}=\prod_{\substack{m\in [M]\\ ({}^S \mathcal{B}')^{i,m} >0}} (f_{m})^{({}^S \mathcal{B}')^{i',m} } 
\end{equation*}
On the other hand, another consequence of the Hadamard condition 2 is that if $i',i''\in \mathcal{D}$, then $\mathcal{B}_{i',m}' = \mathcal{B}_{i'',m}'$ hence, it immediately follows that,
\begin{equation*}
    \prod_{\substack{m\in [M]\\ ({}^S \mathcal{B}')^{i,m} >0}} (f_{m})^{({}^S \mathcal{B}')^{i',m} } =\prod_{\substack{m\in [M]\\ ({}^S \mathcal{B}')^{i,m} >0}} (f_{m})^{({}^S \mathcal{B}')^{i'',m} } 
\end{equation*}
This proves the lemma part of the lemma regarding stable monomials.
\end{proof}

For seeds $\mathbf{s}$, which are group mutation equivalent to the initial seed $\mathbf{s}_0$, we now define these cluster monomials in the $\mathbb{D}$ sub-algebra generated by the cluster variables of $\mathbf{s}$, which we have previously denoted in the preliminaries by  $\mathcal{A}^c_{\mathbf{s}}$.
\begin{definition} \label{def : group monomials}
Let $\mathbf{s}$ be a seed which is group mutation equivalent to the initial seed $\mathbf{s}_0$, in particular, $\mathbf{s}= (\widehat{\mu_{n_o}} \circ ... \circ \widehat{\mu_{n_1}}) (\mathbf{s}_0)$ with $\mathcal{B}'$ being the corresponding exchange matrix of this seed. Recall that the constant values of the blocks of ${}^P\mathcal{B}'$ and ${}^S\mathcal{B}'$ are determined by the entries of the matrix $B'=(\mu_{n_o} \circ ... \circ \mu_{n_1}) (B)$ and our notation as in remark \ref{remark: Abuse of notation}.

For $1\leq k \leq N$, the following monomials are well defined by lemma (\ref{corollary: group monomials}), and we now take these as definitions and denote these by
\begin{equation*}
    (U_{k>})^{\mathbf{s}} =  \prod_{\substack{ i\in [1,N]\\ ({}^P \mathcal{B}')^{k,i} > 0}} (\prod_{i'\in \mathcal{D}^i} y_{i'})^{({}^P \mathcal{B}')^{k,i}} \quad \quad (U_{k<})^{\mathbf{s}} =  \prod_{\substack{ i\in [1,N]\\ ({}^P \mathcal{B}')^{k,i} < 0}} (\prod_{i'\in \mathcal{D}^i} y_{i'})^{({}^P \mathcal{B}')^{k,i}}
\end{equation*}
and
\begin{equation*}
    (V_{k>})^{\mathbf{s}} =\prod_{\substack{m\in [1 , M  ]\\ ({}^S \mathcal{B}')^{k,m} >0}} (f_{m})^{({}^S \mathcal{B}')^{k,m} } \quad \quad (V_{k<})^{\mathbf{s}} =\prod_{\substack{m\in [1 , M  ]\\ ({}^S \mathcal{B}')^{k,m} <0}} (f_{m})^{({}^S \mathcal{B}')^{k,m} }
\end{equation*}

\end{definition}

\begin{lemma}[Product Formula]\label{rem: product_formula}
    In the quotient $\mathcal{A}^c/\mathcal{I}^e$, we can compute the product of the classes of all cluster variables in a group $k$ is given as
\begin{align*}
    \left[ \prod_{k' \in \mathcal{D}^k} \theta_{k'} \right]_{\mathcal{I}^e}
    = \left[\sum_{r=0}^{d_k} \rho_{kr} \cdot U_{k>}^r V_{k>}^r U_{k<}^{d_k-r} V_{k<}^{d_k-r} \right]_{\mathcal{I}^e}. 
\end{align*}
\end{lemma}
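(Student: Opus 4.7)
The plan is to expand the product of binomial exchange polynomials in $\mathcal{A}^c$ and match the result to the stated right-hand side using the ideal relations defining $\mathcal{I}$. Since $\mathcal{A}^c$ has total multiplicity one, each exchange polynomial at the seed $\mathbf{s}$ is binomial: $\theta_{y_{k, \ell}} = u_{y_{k, \ell}>} v_{y_{k, \ell}>} + u_{y_{k, \ell}<} v_{y_{k, \ell}<}$ for $\ell \in \{1, \dots, d_k\}$. Invoking Lemma~\ref{corollary: group monomials}, the cluster monomials collapse within group $k$: $u_{y_{k, \ell}>} = U_{k>}$ and $u_{y_{k, \ell}<} = U_{k<}$ for every $\ell$. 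The stable monomials share the common $F$-frozen divisors $V_{k>}$ and $V_{k<}$, so I can factor $v_{y_{k, \ell}>} = V_{k>} F_\ell^+$ and $v_{y_{k, \ell}<} = V_{k<} F_\ell^-$, where $F_\ell^\pm$ are monomials in only the $I$-column frozens $\{t_{i, m}, s_{i, m}\}$. Each binomial therefore reads $\theta_{y_{k, \ell}} = U_{k>} V_{k>} F_\ell^+ + U_{k<} V_{k<} F_\ell^-$.

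Next, I use the Double Constant Condition (Lemma~\ref{lem: double_const_cond}) to isolate the $\ell$-dependent structure of $F_\ell^\pm$. For blocks ${}^1 I^{k, i}$ and ${}^2 I^{k, i}$ with $i \neq k$, the entries are constant in $\ell$, producing factors $C^+$ and $C^-$ independent of $\ell$. For the diagonal block, the form $c_{kk} \cdot \mathbbm{1}^{d_k \times d_k} \pm I_{d_k}$ forces only the columns $t_{k, \ell}$ and $s_{k, \ell}$ in row $y_{k, \ell}$ to differ from their neighbors, producing an additional factor of $t_{k, \ell}$ (resp.\ $s_{k, \ell}$) distinguishing the index $\ell$. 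Thus each $F_\ell^\pm$ splits as a common $\ell$-independent part times a single $\ell$-dependent power of $t_{k, \ell}$ or $s_{k, \ell}$.

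Multinomial expansion then yields
\begin{equation*}
    \prod_{\ell=1}^{d_k} \theta_{y_{k, \ell}} = \sum_{r=0}^{d_k} U_{k>}^r V_{k>}^r U_{k<}^{d_k - r} V_{k<}^{d_k - r} \cdot \sigma_r, \qquad \sigma_r := \sum_{\substack{I \sqcup J = [d_k] \\ |I| = r}} \prod_{\ell \in I} F_\ell^+ \prod_{\ell \in J} F_\ell^-.
\end{equation*}
Substituting the structure of $F_\ell^\pm$, the inner sum $\sigma_r$ reduces, up to a common factor from the $C^\pm$-contributions, to $\sum_{|I| = r} \prod_{i \in I} s_{k, i} \prod_{j \in J} t_{k, j}$, which by the defining generators of $\mathcal{I}$ is congruent to $\rho_{kr}$ in $\mathcal{A}^c / \mathcal{I}^e$, yielding the claim.

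The principal obstacle is reconciling the accumulated common $C^\pm$-factors in the final step. Off-diagonal blocks of $I$ become nontrivial after sequences of group mutations, so the accrued common contributions $(C^+)^r (C^-)^{d_k - r}$ must be shown to combine consistently with the explicit expression $\rho_{kr} = (p_{kr})^D (q_{kr})^{-D/d_k}$ from the preliminaries. I would trace how these off-diagonal contributions evolve under group mutations via the block mutation formula~\eqref{eq : grouping_mutation} and compare them with the mutation behavior of the generalized coefficients $\rho_{kr}$ through the embedding $\widetilde{\tau_D}$. The $\pm$ sign choices in Lemma~\ref{lem: double_const_cond} and the potential indexing convention $r \leftrightarrow d_k - r$ between the $s,t$-expansion and the generator of $\mathcal{I}$ also require uniform handling across all cases.
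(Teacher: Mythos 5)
Your overall strategy is the same as the paper's: expand the product of the $d_k$ binomial exchange polynomials, use Lemma \ref{corollary: group monomials} to identify the cluster monomials and the $f$-parts of the stable monomials within the group, use the double constant condition to isolate a single $\ell$-dependent factor $t_{k,\ell}^{\pm 1}$ or $s_{k,\ell}^{\pm 1}$ per binomial, and then match the resulting elementary symmetric sums against the generators of $\mathcal{I}$. The gap is exactly where you flag it — the ``accumulated common $C^{\pm}$-factors'' — and your proposed resolution (tracking how the off-diagonal contributions evolve under group mutation and comparing with the mutation behavior of $\rho_{kr}$ through $\widetilde{\tau_D}$) is misdirected: there is nothing to reconcile, because every such common factor is trivial in the quotient. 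Each off-diagonal block ${}^{1}I^{k,j}$, ${}^{2}I^{k,j}$ ($j\neq k$) is a constant block $c\cdot\mathbbm{1}^{d_k\times d_j}$, and the constant part $c_{kk}\cdot\mathbbm{1}$ of the diagonal block likewise contributes uniformly across a full group of columns; hence every factor you collect into $C^{\pm}$ is an integer power of a full-group product $\prod_{j'\in\mathcal{D}^j} t_{j,j'}$ or $\prod_{j'\in\mathcal{D}^j} s_{j,j'}$. Since $\rho_{j0}=\rho_{jd_j}=1$ (because $p_{j0}=p_{jd_j}=1$ and $q_{j0}=q_{jd_j}=1$), the $r=0$ and $r=d_j$ generators of $\mathcal{I}$ in \eqref{eq: main ideal relation} give exactly $\prod_{j'} s_{j,j'}\equiv\prod_{j'} t_{j,j'}\equiv 1 \pmod{\mathcal{I}}$, so $C^{+}\equiv C^{-}\equiv 1$ in $\mathcal{A}^c/\mathcal{I}^e$. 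This is precisely how the paper dispatches the issue (``we can ignore all blocks $I^{i,j}$ except when $i=j$'').

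Two smaller points. First, the $\pm$ ambiguity in Lemma \ref{lem: double_const_cond} and the $r\leftrightarrow d_k-r$ indexing you mention are real but are absorbed by the same relations: e.g.\ if the surviving $\ell$-dependent factors are $t_{k,\ell}^{-1}$ and $s_{k,\ell}^{-1}$, then $\prod_{\ell\in I}t_{k,\ell}^{-1}=\prod_{\ell\notin I}t_{k,\ell}$ modulo $\mathcal{I}$, and the symmetric sum again reduces to the generator of $\mathcal{I}$; the paper's proof of the embedding theorem carries out this case analysis explicitly. Second, the paper organizes the argument as an induction over group mutations (establishing the form $[{}^{1}I^{k,k}]=b_k\mathbbm{1}+\alpha_k I_{d_k}$, $[{}^{2}I^{k,k}]=c_k\mathbbm{1}-\alpha_k I_{d_k}$ seed by seed), whereas you work at a single arbitrary seed quoting Lemma \ref{lem: double_const_cond}; that is a legitimate and slightly cleaner packaging, since the inductive content is already contained in the Hadamard and double constant lemmas. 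With the $\prod s\equiv\prod t\equiv 1$ observation inserted, your argument closes.
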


\begin{proof}
    We will compute the exponents of the $\{s_{k,k'}\}$ and $\{t_{k,k'}\}$ coefficients of $\theta_{k'}^{\mathbf{t}}$ in an arbitrary seed $\mathbf{t}$ using the double constant condition. The blocks $[^1I^{k,k}]^{\mathbf{t}} = b_k \mathbb{1}^{d_k \times d_k} + \alpha_kI_{d_k}$ and $[^2I^{k,k}]^{\mathbf{t}} = c_k \mathbb{1}^{d_k \times d_k} - \alpha_kI_{d_k}$ at the seed $\mathbf{t}$ give us the contributions of the variables $\{s_{k,k'}\}$ and $\{t_{k,k'}\}$ to $\theta_{k'}^{\mathbf{t}}$, respectively. Notice that by the relations $\prod_{k' \in \mathcal{D}^k} s_{k,k'} = \prod_{k' \in \mathcal{D}^k} t_{k,k'} = 1$ in $\mathcal{I}$, we can ignore all blocks $I^{i,j}$ except when $i = j$, because their contribution to each exchange polynomials, as factors of each term, are equal to one in this quotient.  

    This formula holds at the initial seed $\mathbf{s}_0$. We can prove this statement with mathematical induction by first assuming that the formula holds at an arbitrary seed $\mathbf{s}$ then proving that the condition holds at any adjacent seed $\mathbf{s}'=\mu_\ell(\mathbf{s})$.

    If $k=\ell$ then the theorem follows by the mutation rule and our assumption that the product formula holds at the seed $\mathbf{t}$.

    Say $k\not =\ell$, without loss of generality, by the double constant condition assume $\mu_{\mathbf{s}}({}^1I^{k,k})=c_1\mathbb{1}^{d_k\times d_k}+I_k$ for some integer $c_1$ which implies $\mu_{\mathbf{s}}({}^2I^{k,k})=c_2\mathbb{1}^{d_k\times d_k}-I_k$ for some integer $c_2$. This implies that the class of the coefficient $p_{kr}$ is given by the class $\left[p_{kr}\right]_{\mathcal{I}^e}=\prod_{k'\in \mathcal {D}^k}\left[ s_{k'}+t_{k'}\right]_{\mathcal{I}^e}=\prod_{k'\in \mathcal{D}^k}\left[\frac{v_{k'>}}{V_{k>}} +\frac{v_{k'<}}{V_{k<} }\right]_{\mathcal{I}^e}^{\mathbf{s}}$ By the mutation rules of \ref{eq: component one rule} and \ref{eq: component two rule} it follows that there will exists integers $e_1$ and $e_2$ so that $\mu_{\mathbf{s}'}({}^1I^{k,k})=e_1\mathbb{1}^{d_k\times d_k}+I_k$ and $\mu_{\mathbf{s}'}({}^2I^{k,k})=e_2\mathbb{1}^{d_k\times d_k}-I_k$. This implies that the class of the coefficient $p_{kr}$ is given by the class $\left[p_{kr}\right]_{\mathcal{I}^e}=\prod_{k'\in \mathcal {D}^k}\left[ s_{k'}+t_{k'}\right]_{\mathcal{I}^e}=\prod_{k'\in \mathcal{D}^k}\left[\frac{v_{k'>}}{V_{k>}} +\frac{v_{k'<}}{V_{k<} }\right]_{\mathcal{I}^e}^{\mathbf{s}'}$
    
    Hence the product formula holds for seed $\mathbf{s}'$.
    
\end{proof}

\subsection{The Embedding}\label{sec: embedding}
    Let $\mathcal{A}^g(\mathbf{x}, B, \mathcal{P})$ be any generalized cluster algebra(GCA), $\mathcal{A}^g(\mathbf{x}, \widetilde{B}, \widetilde{\mathcal{P}})$ be the associated GCA defined in section \ref{section: Ring of Coefficients}, and $\mathcal{A}^c(\widehat{\mathbf{x}}, \mathcal{B})$ be the associated cluster algebra defined in \ref{subsection:CCA}. Each seed $\mathbf{s}$ in $\mathcal{A}^g(\mathbf{x}, \widetilde{B}, \widetilde{\mathcal{P}})$ has a corresponding seed $\widehat{\mathbf{s}}$ in $\mathcal{A}^c(\widehat{\mathbf{x}}, \mathcal{B})$ obtained by identifying each mutation $\mu_k$ with the corresponding group mutation $\widehat{\mu}_k$, similarly there is a seed $\mathbf{t}$ of $\mathcal{A}^g(B,\mathcal{P})$ which corresponds to $\mathbf{s}$. Also, for each cluster variable $X_{k}^{(\mathbf{s})}$ in $\mathbf{s}$, we have exactly $d_k$ corresponding cluster variables $X_{k,1}^{(\widehat{\mathbf{s}})}, \cdots, X_{k,d_k}^{(\widehat{\mathbf{s}})}$ in $\widehat{\mathbf{s}}$. Each frozen variable $f_k$ in the initial seed $\mathbf{s}_0$ corresponds to a frozen variable in the initial seed $\widehat{\mathbf{s}}_0$, which we will also denote by $F_k$ in the following theorem.

\begin{theorem}
    There exists a unique embedding of $\mathbb{Z}$ algebras
    \begin{align*}
        &\Phi : \mathcal{A}^g(\mathbf{x}, \widetilde{B}, \widetilde{\mathcal{P}}) \hookrightarrow \mathcal{A}^c(\widehat{\mathbf{x}}, \mathcal{B})/\mathcal{I}
    \end{align*}
    such that for each cluster variable $x_k^{\mathbf{s}}$ and frozen variable $f_j$ of $\mathcal{A}^g$ the map is defined by $\Phi\left(x_k^{\mathbf{s}}\right)= \left[X_{k,1}^{(\widehat{\mathbf{s}})} \cdots X_{k,d_k}^{(\widehat{\mathbf{s}})} \right]_{\mathcal{I}^e}$ and $\Phi(f_j) = F_j$.
\end{theorem}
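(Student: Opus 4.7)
The plan is to define $\Phi$ on generators by the prescribed formulas, extend multiplicatively, and verify that every generalized exchange relation of $\mathcal{A}^g(\mathbf{x},\widetilde{B},\widetilde{\mathcal{P}})$ is preserved modulo $\mathcal{I}^e$. Uniqueness is automatic, since the prescribed values on all cluster variables and all frozen variables determine $\Phi$ on a generating set. For existence, I would set $\Phi(x_k^{\mathbf{s}_0}) := [\prod_{j\in\mathcal{D}^k} X_{k,j}^{(\widehat{\mathbf{s}}_0)}]_{\mathcal{I}^e}$ and $\Phi(f_j):=[F_j]_{\mathcal{I}^e}$ on the initial seed, then define $\Phi$ on $x_k^{\mathbf{s}}$ for every seed $\mathbf{s}\sim\mathbf{s}_0$ by the same recipe relative to the corresponding group--mutation--equivalent seed $\widehat{\mathbf{s}}$. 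To conclude that this really is a ring homomorphism, the only relations I need to check are the generalized exchange relations $x_k^{\mathbf{s}}\cdot x_k^{\mathbf{s}'}=\theta_k^{\mathbf{s}}$ for adjacent seeds $\mathbf{s}'=\mu_k(\mathbf{s})$.

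This reduces to the single identity
\[
\Bigl[\prod_{j\in\mathcal{D}^k} \theta_{k,j}^{(\widehat{\mathbf{s}})}\Bigr]_{\mathcal{I}^e} \;=\; \Phi(\theta_k^{\mathbf{s}}),
\]
because the left side equals $[\prod_{j}X_{k,j}^{(\widehat{\mathbf{s}})}X_{k,j}^{(\widehat{\mathbf{s}}')}]_{\mathcal{I}^e}$ and must match $\Phi(x_k^{\mathbf{s}})\Phi(x_k^{\mathbf{s}'})$. The left side is handled by the Product Formula (Lemma~\ref{rem: product_formula}), which rewrites it as $[\sum_{r=0}^{d_k}\rho_{kr}\,U_{k>}^{r}V_{k>}^{r}U_{k<}^{d_k-r}V_{k<}^{d_k-r}]_{\mathcal{I}^e}$. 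For the right side, Remark~\ref{remark: remove floor functions} ensures that in $\mathcal{A}^g(\widetilde{B},\widetilde{\mathcal{P}})$ the exchange polynomial has the clean form $\theta_k^{\mathbf{s}}=\sum_{r}\rho_{kr}\,u_{k>}^{r}v_{k>}^{[r]}u_{k<}^{d_k-r}v_{k<}^{[d_k-r]}$ with no floor functions in the stable factors. Hadamard condition~1 (eq.~\ref{eq: Hadamard Condition 1}) gives $\Phi(u_{k>}^{\mathbf{s}})=U_{k>}^{(\widehat{\mathbf{s}})}$ and $\Phi(u_{k<}^{\mathbf{s}})=U_{k<}^{(\widehat{\mathbf{s}})}$; Hadamard condition~2 (eq.~\ref{eq: Hadamard Condition 2}), together with the divisibility $d_k\mid D$, gives the matching of stable monomials $\Phi(v_{k>}^{[r]}v_{k<}^{[d_k-r]})=V_{k>}^{r}V_{k<}^{d_k-r}$ with common exponent $rD\widehat{B}_{k,\ell}/d_k$; and the generalized coefficients $\rho_{kr}$ pass through $\Phi$ as Laurent monomials in the $F_j$ that appear term-for-term in the Product Formula.

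The main obstacle will be bookkeeping of three layers of scaling that must cancel exactly: (i) the $D$-th power adjoined by $\widetilde{\tau_D}$, which scales the frozen columns of $B$ by $D$; (ii) the factor $1/d_k$ in the unfolded block ${}^P\mathcal{B}$; and (iii) the factor $D/d_k$ in the slack block ${}^S\mathcal{B}$. I would verify this cancellation at the initial seed first, where the scalings are transparent, and then propagate along any mutation sequence using the one-to-one correspondence between ordinary mutations on $\mathcal{A}^g(\widetilde{B},\widetilde{\mathcal{P}})$ and group mutations on $\mathcal{A}^c$ established in Subsection~\ref{subsubsection:group mutation class}, together with the fact that both Hadamard conditions persist under arbitrary composite mutations (Lemma~\ref{lemma: group mutation rule}).

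Finally, to show that $\Phi$ is well-defined independent of the choice of mutation sequence reaching $\mathbf{s}$, I would invoke the Caterpillar Lemma so that $x_k^{\mathbf{s}}$ is intrinsically defined. Injectivity follows from the Laurent phenomenon in both algebras: any $f\in\mathcal{A}^g(\widetilde{B},\widetilde{\mathcal{P}})$ is a Laurent polynomial in the initial cluster and frozen variables, $\Phi$ sends this to a Laurent polynomial in the algebraically independent initial generators $Y_{k,j}^{(\widehat{\mathbf{s}}_0)}$ and $F_j$ of $\mathcal{A}^c/\mathcal{I}^e$ (since $\mathcal{I}$ imposes relations only among the auxiliary variables $s_{k,r},t_{k,r}$ and the generalized coefficients, not among the $Y_{k,j}$ or $F_j$), and so $\Phi(f)=0$ forces $f=0$.
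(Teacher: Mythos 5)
Your overall strategy --- reduce everything to the Product Formula plus the two Hadamard conditions and propagate by induction over group-mutation-equivalent seeds --- is the same as the paper's, and your identification of the key identity $\bigl[\prod_{j\in\mathcal{D}^k}\theta_{k,j}^{(\widehat{\mathbf{s}})}\bigr]_{\mathcal{I}^e}=\Phi(\theta_k^{\mathbf{s}})$ is exactly the crux. However, there is a genuine gap in how you construct $\Phi$ in the first place. You propose to ``define $\Phi$ on generators, extend multiplicatively, and verify that every generalized exchange relation is preserved,'' asserting that the exchange relations are the only relations you need to check. A (generalized) cluster algebra is defined as a subalgebra of an ambient field generated by the cluster variables; it is \emph{not} given by a presentation with the exchange relations as defining relations, and in general the cluster variables satisfy further algebraic relations in $\mathcal{F}$ that are not consequences of the exchange relations alone. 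So checking the exchange relations does not establish that your assignment extends to a well-defined ring homomorphism. The paper sidesteps this by invoking the Laurent phenomenon: $\mathcal{A}^g(\mathbf{x},\widetilde{B},\widetilde{\mathcal{P}})$ sits inside the Laurent polynomial ring in the initial cluster, so one gets a canonical $\mathbb{Z}$-algebra map into $\mathbb{Z}\widehat{\mathbb{P}}[\widehat{\mathbf{s}}_0^{\pm 1}]/\mathcal{I}^E$ determined by the images of the initial cluster variables and frozen variables (well-definedness is then automatic), and the entire content of the proof becomes showing that the images of all non-initial cluster variables land in the subring generated by the classes of the cluster variables of $\mathcal{A}^c$ --- which is precisely the inductive verification of $\Phi(x_k^{\mathbf{s}})=[\,X_{k,1}^{(\widehat{\mathbf{s}})}\cdots X_{k,d_k}^{(\widehat{\mathbf{s}})}\,]_{\mathcal{I}^e}$. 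You should reorganize your argument along these lines.

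Two smaller points. First, your induction omits the analogue of the paper's condition $(iv)$: to apply the Product Formula at an arbitrary seed you must also track how the coefficient strings $p_{kr}$ of the folded algebra are expressed in the classes of the auxiliary variables $s_{k,r},t_{k,r}$, and this step genuinely uses the Double Constant Condition (Lemma~\ref{lem: double_const_cond}), which your proposal never invokes. Second, your parenthetical claim that $\mathcal{I}$ imposes ``no relations among the $F_j$'' is not literally right: the generators $\rho_{kr}-\sum\prod s_i\prod t_j$ of $\mathcal{I}$ involve the $F_j$ through the generalized coefficients $\rho_{kr}$; injectivity holds because these relations can be eliminated in favor of the $s,t$ variables, but that deserves a sentence rather than a dismissal.
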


\begin{proof}
    Say $\mathbb{Z\widehat{P}}[\widehat{\mathbf{s}_0}^{\pm 1}]$ denotes ring of Laurent polynomials in the cluster variables of the seed $\widehat{s_0}$ with coefficients in $\mathbb{Z\widehat{P}}$ and $\mathcal{I}^E$ denote the ideal extension of $\mathcal{I}\lhd \mathbb{Z\widehat{P}}$ into $\mathbb{Z\widehat{P}}[\widehat{\mathbf{s}_0}^{\pm 1}]$. By the Laurent phenomena of cluster algebras, there exists a unique $\mathbb{Z}$-algebra embedding
    \begin{equation*}
        \Phi:\mathcal{A}^g(\mathbf{x},\widetilde{B},\widetilde{P}) \to \mathbb{Z\widehat{P}}[\widehat{\mathbf{s}_0}^{\pm 1}] \diagup \mathcal{I}^E
    \end{equation*}
    such that $\Phi(x_k^{\mathbf{s}_0}) = \left[X_{k,1}^{(\widehat{\mathbf{s}_0})} \cdots X_{k,d_k}^{(\widehat{\mathbf{s}_0})} \right]_{\mathcal{I}^E}$ and $\Phi(f_j)= F_j$ for all $k\in [N]$ and $j\in [M]$. 
    
    Let $\mathcal{Y}$ be the set of cluster variables of the cluster algebra $\mathcal{A}^c$, $\mathbb{Z\widehat{P}}[\mathcal{Y}]$ be the $\mathbb{Z\widehat{P}}$ subalgebra of $\mathbb{Z\widehat{P}}[\widehat{\mathbf{s}_0}^{\pm 1}] \diagup \mathcal{I}^E$ generated by the classes of each cluster variable of $\mathcal{Y}$, and $\pi: \mathbb{Z\widehat{P}}[\widehat{\mathbf{s}_0}^{\pm 1}]\to \mathbb{Z\widehat{P}}[\widehat{\mathbf{s}_0}^{\pm 1}]\diagup \mathcal{I}^E$ be the ring quotient map. Notice that $\pi$ restricts and descends to an isomorphism of rings $\pi:\mathcal{A}^c \diagup \mathcal{I}\to \mathbb{Z\widehat{P}}[\mathcal{Y}]$ and, even more, this map is also an isomorphism of $\mathbb{Z}$-algebras. Let $\widetilde{X}$ denote the set of cluster variables of $\mathcal{A}^g(\mathbf{x},\widetilde{B},\widetilde{P})$, since $\Phi(f_j)\in \mathbb{Z\widehat{P}}[\mathcal{Y}]$ for any frozen variable by construction, if 
    \begin{equation}\label{eq: thm condition}
        \Phi(\widetilde{X})\subset \mathbb{ZP}[\mathcal{Y}]
    \end{equation}
    then it follows $\hbox{image}(\Phi)\subset \mathbb{Z\widehat{P}}[\mathcal{Y}]$ so, by restricting the codomain, $\Phi$ is a $\mathbb{Z}$ algebra embedding of $\mathcal{A}^g(\mathbf{x},\widetilde{B},\widetilde{P})$ into $\mathcal{A}^c\diagup \mathcal{I}^e$.

    Let $\mathbf{s}$ be any (generalized) seed which is mutation equivalent to $\mathbf{s}_0$ and recall that for any (generalized) cluster variable $x_k^{\mathbf{s}}$ of $\mathcal{A}^g(\mathbf{x},\widetilde{B},\widetilde{P})$ the corresponding (generalized) exchange polynomial $\theta^{\mathbf{s}}_k$ is homogeneous in the variables $u_{k>}v_{k>}^{[1]}$ and $u_{k<}v_{k<}^{[1]}$ by remarks \ref{remark: remove floor functions} and \ref{def: generalized roots}. In addition, recall the monomials associated to any group of cluster variables of a seed $\widehat{\mathbf{s}}$, which is group mutation equivalent to $\widehat{\mathbf{s}_0}$, defined in definition \ref{def : group monomials}.

    To prove that condition \ref{eq: thm condition} holds we will show that for any generalized seed $\mathbf{s}$ which is mutation equivalent to $\mathbf{s}_0$, and for any indices $k$ corresponding to cluster variables, the following conditions hold
    \begin{enumerate}
        \item[(i)] $\Phi\left(u_{k>}^{\mathbf{s}}  \right) = \left[U_{k>}^{\widehat{\mathbf{s}}} \right]_{\mathcal{I}^E} \quad \hbox{ and } \quad \Phi\left(u_{k<}^{\mathbf{s}}  \right) = \left[U_{k<}^{\widehat{\mathbf{s}}} \right]_{\mathcal{I}^E}$,
        \item[(ii)] $\Phi\left(\left(v_{k>}^{[1]}\right)^{(\mathbf{s})}  \right) = \left[ \left(V_{k>}\right)^{(\widehat{\mathbf{s}})}\right]_{\mathcal{I}^E} \quad \hbox{ and } \quad \Phi\left(\left(v_{k<}^{[1]}\right)^{(\mathbf{s})}  \right) = \left[ \left(V_{k<}\right)^{(\widehat{\mathbf{s}})}\right]_{\mathcal{I}^E}$.
        \item[(iii)] 
        $\varphi(X_k^{\mathbf{s}})=\left[X_{k,1}^{\widehat{\mathbf{s}}} \cdots X_{k,d_k}^{\widehat{\mathbf{s}}} \right]_{\mathcal{I}^E}$
        \item[(iv)]
        $\varphi(p_{kr}^{\mathbf{s}})=\sum_{ \substack{I\cup J = \mathcal{D}^k \\ |I| = r  \\ I\cap J = \emptyset}
} \left[\prod_{k'\in I} \frac{v_{k'<}}{V_{k<}} \cdot \prod_{k'' \in J} \frac{v_{k''>}}{V_{k>}} \right]^{\widehat{\mathbf{s}}}_{\mathcal{I}^e}$
        
    \end{enumerate}
    We can prove that these conditions hold for any arbitrary seed $\mathbf{s}$ which is group mutation equivalent to $\mathbf{s}_0$ by using mathematical induction. These three conditions holds for the initial seed $\mathbf{s}_0$. We will now assume these three conditions hold for an arbitrary seed $\mathbf{s}$ and show that these conditions hold for any adjacent seed $\mathbf{s}' = \mu_{\ell}(\mathbf{s})$. 
    
    Since $\Phi$ fixes any generalized coefficient, that is $\Phi(\rho_{kr}) = \rho_{kr}$, conditions $(i)$ and $(ii)$ holding at seed $\mathbf{s}$ implies that $\Phi(\theta_k^{\mathbf{s}}) = \left[ \theta_{k,1}^{\widehat{\mathbf{s}} }  \cdot ... \cdot \theta_{k,d_k}^{\widehat{\mathbf{s}} }\right]_{\mathcal{I}^E}$ by the product formula \ref{rem: product_formula}, and even more that condition $(iii)$ holds for the seed $\mathbf{s}'$. It suffices to prove conditions $(i)$, $(ii)$, and $(iv)$ hold for $\mathbf{s}'$. 

    Say $k=\ell$, at the seed $\mathbf{s}'$ the conditions $(i)$ and $(ii)$ by the mutation rule and our assumption that conditions $(i)$ and $(ii)$ hold at seed $\mathbf{s}$. Similarly condition $(iv)$ holds in this case by the mutation rules.

    Say $k\not =\ell$ and $\mathfrak{B}$ denotes the modified exchange matrices corresponding to $\widetilde{B}$ and assume $\widetilde{B}_{k,i}>0$ with $i\in [N]$. By the inductive hypothesis, condition $(iii)$ holds for the seed $\mathbf{s}'$. Consider the highest power of the cluster variable $x_i^{\mathbf{s}'}$ that divides the cluster monomial $u_{k>}^{\mathbf{s}'}$, by the hadamard condition \ref{eq: Hadamard Condition 1}, we have
    \begin{gather*}
        \Phi \left( (x_i^{\mathbf{s}'})^{ \mu_{\mathbf{s}'}(\mathfrak{B})_{k,i}} \right)   = \left ( \Phi(x_i^{\mathbf{s}'}) \right)^{\frac{1} {d_k}\mu_{\mathbf{t}'}(B)_{k,i}} = \left[(X_{i,1}^{\widehat{\mathbf{s}}
    '}\cdot ...\cdot X_{i,d_i}^{\widehat{\mathbf{s}}
    '})^{ \left(\mu_{\widehat{\mathbf{s}}
    '}({}^P \mathcal{B}) \right)^{k,i}}\right]_{\mathcal{I}^E}
    \end{gather*}
    which implies that $\Phi(u_{k>}^{\mathbf{s}'}) = U_{k>}^{\widehat{s}'}$. By similar reasoning, it follows that $\Phi(u_{k<}^{\mathbf{s}'}) = U_{k<}^{\widehat{s}'}$ so condition $(i)$ holds for $\mathbf{s}'$.

    On the other hand, say $f_j$ is a frozen variable of $\mathcal{A}(\mathbf{x},\widetilde{B},\widetilde{P})$ with $j\in [M]$, assume $\widetilde{B}_{k,N+j}>0$ and consider the Laurent monomial $\left[f_j^{[1]}\right]^{\mathbf{s}'}$ as in \ref{eq: frozen box}; This is the highest power of the frozen variable $f_j$ that divides the stable monomial $v_{k>}^{\mathbf{s}'}$, by hadamard condition \ref{eq: Hadamard Condition 2} it follows that
    \begin{gather*}
        \Phi\left(   (f_j^{[1]})^{\mathbf{s}'} \right) = \Phi \left( f_j^{\lfloor\frac{\mu_{\mathbf{s}'}(\widetilde{B})_{k,N+j}}{d_j} \rfloor} \right) =\Phi \left( f_j^{\lfloor D\frac{\mu_{\mathbf{t}'}(B)_{k,N+j}}{d_j} \rfloor} \right) = \Phi \left( f_j^{} \right)^{\frac{D}{d_j}\mu_{\mathbf{t}'}(B)_{k,N+j} } =\Phi \left( f_j^{} \right)^{\left(\mu_{\widehat{\mathbf{s}}'}({}^SB) \right)^{k,j} } 
    \end{gather*}
    and this implies that $\Phi\left((v_{k>}^{[1]})^{\mathbf{s}'} \right)=V_{k>}^{\widehat{s}'}$ and by similar reasoning it follows that $\Phi\left((v_{k<}^{[1]})^{\mathbf{s}'} \right)=V_{k<}^{\widehat{s}'}$, so condition $(ii)$ holds for $\mathbf{s}'$.

    Note that whenever $k\not =\ell$ we have $p_{kr}^{\mathbf{s}'}=p_{kr}^{\mathbf{s}}$. By similar reasoning as in the proof of the product formula \ref{rem: product_formula} we may assume $\mu_{\widehat{s}}({}^1I^{k,k})=c_1 \mathbb{1}^{d_k\times d_k}+ I_k$ for some non negative integer $c_1\in \mathbb{Z}_{\geq 0}$. By the double constant condition this implies that the class of the Laurent monomials of our discussion have the following forms, 
    \begin{equation*}
        \left[\frac{v_{k'<}}{V_{k<}}\right]_{\mathcal{I}^e}^{\widehat{\mathbf{s}}}=[s_{k'}]_{\mathcal{I}^e} \quad \hbox{ and } \quad \left[\frac{v_{k''>}}{V_{k>}}\right]_{\mathcal{I}^e}^{\widehat{\mathbf{s}}}=[t_{k''}]_{\mathcal{I}^e}
    \end{equation*}
    Now since $k\not =\ell $ it follows that $\mu_{\widehat{s}'}({}^1I^{k,k})=c_2 \mathbb{1}^{d_k\times d_k}+ I_k$ for some (possible negative) integer $c_2\in \mathbb{Z}$. It follows that the monomials in question at the seed $\widehat{\mathbf{s}}$ are equal to one of the following forms,
    \begin{enumerate}
        \item $\left[\frac{v_{k'<}}{V_{k<}}\right]_{\mathcal{I}^e}^{\widehat{\mathbf{s}}'}=[s_{k'}]_{\mathcal{I}^e} \quad \hbox{ and } \quad \left[\frac{v_{k'>}}{V_{k>}}\right]_{\mathcal{I}^e}^{\widehat{\mathbf{s}}'}=[t_{k'}]_{\mathcal{I}^e}$
        \item $\left[\frac{v_{k'<}}{V_{k<}}\right]_{\mathcal{I}^e}^{\widehat{\mathbf{s}}'}=[s_{k'}t_{k'}^{-1}]_{\mathcal{I}^e} \quad \hbox{ and } \quad \left[\frac{v_{k'>}}{V_{k>}}\right]_{\mathcal{I}^e}^{\widehat{\mathbf{s}}'}=[1]_{\mathcal{I}^e}$
        \item $\left[\frac{v_{k'<}}{V_{k<}}\right]_{\mathcal{I}^e}^{\widehat{\mathbf{s}}'}=[1]_{\mathcal{I}^e} \quad \hbox{ and } \quad \left[\frac{v_{k'>}}{V_{k>}}\right]_{\mathcal{I}^e}^{\widehat{\mathbf{s}}'}=[t_{k'}s_{k'}^{-1}]_{\mathcal{I}^e}$
        \item $\left[\frac{v_{k'<}}{V_{k<}}\right]_{\mathcal{I}^e}^{\widehat{\mathbf{s}}'}=[t_{k'}^{-1}]_{\mathcal{I}^e} \quad \hbox{ and } \quad \left[\frac{v_{k'>}}{V_{k>}}\right]_{\mathcal{I}^e}^{\widehat{\mathbf{s}}'}=[s_{k'}^{-1}]_{\mathcal{I}^e}$
    \end{enumerate}
    In any case, by the relations defining $\mathcal{I}$ it follows that for disjoint subsets $I,J\subset \mathcal{D}^k$ such that $I\cup J=\mathcal{D}^k$ and $|I|=r$ we have
    \begin{equation*}
        \left[\prod_{k'\in I} \frac{v_{k'<}}{V_{k<}} \cdot \prod_{k'' \in J} \frac{v_{k''>}}{V_{k>}} \right]^{\widehat{\mathbf{s}}'}_{\mathcal {I}^e}=\left[\prod_{k'\in I} \frac{v_{k'<}}{V_{k<}} \cdot \prod_{k'' \in J} \frac{v_{k''>}}{V_{k>}} \right]^{\widehat{\mathbf{s}}}_{\mathcal {I}^e}
    \end{equation*}
    and even more that condition $(iv)$ holds at seed $\widehat{s}'$.
    
\end{proof}

\begin{proof}[Proof of Theorem $1$]
    We now prove that any generalized cluster algebra $\mathcal{A}^g(\mathbf{x},B,\mathcal{P})$ over a ground ring $\mathbb{ZP}$ is a subquotient as a $\mathbb{ZP}$ algebra of a (traditional) cluster algebra (with restricted scalars). 

    The folded cluster algebra $\mathcal{A}^c(\widehat{\mathbf{s}_0})$ is a $\mathbb{Z\widehat{P}}$ algebra. For a given group $k$ of cluster variables of $\mathcal{A}^c$ and any given seed $\widehat{\mathbf{s}}$ which is group mutation equivalent to $\widehat{\mathbf{s}_0}$  consider the product of all the cluster variables in this group, namely,
    \begin{equation}\label{eq: cluster products}
X_{k,1}^{\widehat{\mathbf{s}}}\cdot...\cdot X_{k,d_k}^{\widehat{\mathbf{s}}}\in \mathcal{A}^c
    \end{equation}
    Let $\mathcal{A}$ be the subring of $\mathcal{A}^c$ given by forgetting the $\mathbb{ZP}$ algebra structure of the $\mathbb{ZP}$ subalgebra of $\mathcal{A}^c$ generated by elements of the form \ref{eq: cluster products} and $\mathcal{I}^{\mathfrak{E}}$ be the ideal extension of $\mathcal{I}$ into $\mathcal{A}$.

    There is a unique $\mathbb{Z}$ algebra embedding $\eta_D :\mathbb{ZP}\to \mathcal{A}^c(\mathbf{x},\mathcal{B})$ such that $\eta_D( f_j) =F_j^D$. Consider $\mathcal{A}^c$ as a $\mathbb{ZP}$ algebra via the map $\eta_D$. Now the ring quotient $\mathcal{A}\diagup \mathcal{I}^{\mathfrak{E}}$ can be consider as a $\mathbb{ZP}$ algebra subquotient of the cluster algebra $\mathcal{A}^c$. 

    The map $\Phi\circ \widetilde{\tau_D}:\mathcal{A}^g(B,\mathcal{P})\to \mathcal{A}^c(\mathcal{B})\diagup\mathcal{I}^e$ is an embedding of $\mathbb{Z}$ algebra, and when its domain and codomain have the $\mathbb{ZP}$ algebra structures given by the following diagram
    \begin{equation}
        \begin{tikzcd}
	{\mathbb{ZP}} && {\mathcal{A}^g(B,\mathcal{P})} && {\mathcal{A}^g(\widetilde{B},\widetilde{\mathcal{P}})} && {\mathcal{A}^c(\mathcal{B})\diagup\mathcal{I}^e}
	\arrow["\iota", from=1-1, to=1-3]
	\arrow["{\widetilde{\tau_D}}", from=1-3, to=1-5]
	\arrow["\Phi", tail, from=1-5, to=1-7]
\end{tikzcd}
    \end{equation}
    then $\Phi\circ \widetilde{\tau_D}$ is a $\mathbb{ZP}$ algebra embedding. Let $\pi: \mathcal{A}^c\to \mathcal{A}^c\diagup \mathcal{I}^e$ be the ring quotient map given by the ideal $\mathcal{I}^e$. The map $\pi$ is a $\mathbb{ZP}$ algebra homomorphism when its domain and codomain have the previously described $\mathbb{ZP}$ algebra structures. Furthermore, $\pi$ restricts and descends to an embedding of $\mathbb{ZP}$ algebra $\widetilde{\pi}:\mathcal{A}\diagup \mathcal{I}^{\mathfrak{E}}\to \mathcal{A}^c\diagup \mathcal{I}^e$. On the other hand $\hbox{image}(\Phi)=\hbox{image}(\widetilde{\pi})$, and by identifying $\hbox{image}(\widetilde{\pi})$ with $\mathcal{A}\diagup \mathcal{I}^{\mathfrak{E}}$ theorem follows.

\end{proof}

\begin{remark}
    This construction still works if we replace the total multiplicity everywhere with the least common multiple of the degrees $d_1,..,d_N$ of the generalized exchange polynomials of the given generalized cluster algebra $\mathcal{A}^g(B,\mathcal{P})$
\end{remark}

\color{black}

\bibliographystyle{plain}
%\bibliography{refs}

\end{document}